\DeclareMathOperator{\Hom}{Hom}
\DeclareMathOperator{\ord}{ord}
\newtheorem{thm}{Theorem}[section]
\newtheorem{df}[thm]{Definition}
\newtheorem{prop}[thm]{Proposition}
\newtheorem{lem}[thm]{Lemma}
\newtheorem{cor}[thm]{Corollary}
\newtheorem{rem}[thm]{Remark}
\begin{document}

\title{Local Fourier transform and blowing up}
\date{}
\author{Kazuki Hiroe}
\thanks{The author is supported by JSPS Grant-in-Aid for Young Scientists (B)
Grant Number 26800072.}
\email{kazuki@josai.ac.jp}
\keywords{Local Fourier transform, blow up, Milnor number, Stokes structure, 
iterated torus knot}
\subjclass[2010]{14H20,14H50,34M25,34M35,34M40}
\address{Department of Mathematics, Josai University,\\ 
1-1 Keyakidai Sakado-shi Saitama 350-0295 JAPAN.}
\begin{abstract}
	We consider a resolution of ramified irregular singularities
	of meromorphic connections on a formal disk via 
	local Fourier transforms.
	A necessary and sufficient condition for an irreducible connection
	to have a resolution of the ramified singularity is determined
	as an analogy of the blowing up of plane curve singularities.
	We also relate the irregularity of Komatsu and Malgrange of 
	connections 
	to the intersection numbers and the Milnor numbers of plane curve germs.
	Finally, 
	we shall define an analogue of Puiseux characteristics for connections 
	and find an invariant of the family of connections 
	with the fixed Puiseux characteristic by means of the structure of 
	iterated torus knots 
	of the plane curve germs.
\end{abstract}
\maketitle
\section*{Introduction}
The Fourier-Laplace transform plays important roles in the theory of 
ordinary differential equations on the Riemann sphere.
The local analogy of the transform, say the local Fourier transform,
is introduced by Laumon \cite{Lau} in the $l$-adic setting, 
and by Bloch-Esnault \cite{BloEsn} 
and Garc\'ia Lop\'ez \cite{Gar} in the complex domain 
to study local structures of the image of 
Fourier transform of global differential equations.
There are many applications of this transform to the analytic theory of 
differential equations, for example, see the works of Mochizuki \cite{Moc},
Sabbah \cite{Sab2} 
and Hien-Sabbah\cite{HieSab} in which the local Fourier transform is 
successfully applied to study the Stokes structure of differential equations.

In this paper, we ask a question: 
Is there a resolution of ramified irregular singularities of differential
equations 
via local Fourier transforms?
That is,  we shall consider an analogy of resolution of singularities of 
plane curve germs.
In fact, a similarity between local Fourier transforms and the blowing 
up of plane curves is pointed out by Sabbah who successfully uses
the blowing up technique
to calculate an explicit formula of local Fourier transforms in \cite{Sab}
after the work of Roucairol in \cite{Rou}.

To state our main theorems, we recall some definitions which are 
explained in detail in the latter sections.
Let $K$ be an algebraically closed field of characteristic zero. 
For a positive integer $q$ and $f\in K(\!(x^{\frac{1}{q}})\!)$
with $-p/q=\ord(f)$, 
let us define 
$E_{f,q}=(V,\nabla)$, a connection over $K( \!(x)\!)$, as follows. 
Regard 
$V=K(\!(x^{\frac{1}{q}})\!)$ as a $K(\!(x)\!)$-vector space
and define $\nabla(v)=(\frac{d}{dx}+x^{-1}f)v$ for $v\in V$.
To an irreducible $E_{f,q}$, we associate a plane curve germ,
	\[
		C_{f,q}(x,y)=\prod_{k=1}^{q}\left(y-
		\frac{1}{f_{k}(x^{\frac{1}{q}})}\right),
	\]
	where $f_{k}(x^{\frac{1}{q}})=
	f(\zeta_{q}^{k}
	x^{\frac{1}{q}})$ and $\zeta_{q}$ is a primitive $q$-th root of 
	unity.
Then the intersection numbers $I(\ ,\ )$ and Milnor numbers $\mu$ 
of curve germs can be 
written by the irregularities of connections as follows.
\begin{thm}[Theorem \ref{diffinv}]
	Let $E_{f,q}=(V,\nabla),\,E_{g,q'}=(W,\nabla')$ 
	be irreducible $K(\!(x)\!)$-connections. Set
	$-p/q=\ord(f),\,-p'/q'=\ord(g)$.
	If $E_{f,q}\not\cong E_{g,q'}$, then
	\[
		I\left(C_{f,q},C_{g,q'}\right)=
		pq'+p'q-\mathrm{Irr}(\mathrm{Hom}_{K(\!(x)\!)}(V,W)).
	\]
\end{thm}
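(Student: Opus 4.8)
The plan is to express both sides in terms of the Puiseux expansions $f_k=f(\zeta_q^k x^{1/q})$ and $g_l=g(\zeta_{q'}^l x^{1/q'})$ and reduce the statement to a single numerical identity. First I would invoke the resultant description of the intersection number: since $C_{f,q}=\prod_k(y-1/f_k)$ and $C_{g,q'}=\prod_l(y-1/g_l)$ have no common branch (which will come for free from $E_{f,q}\not\cong E_{g,q'}$, see below),
\[
	I(C_{f,q},C_{g,q'})=\ord_x\mathrm{Res}_y(C_{f,q},C_{g,q'})=\sum_{k=1}^{q}\sum_{l=1}^{q'}\ord_x\!\left(\frac{1}{f_k}-\frac{1}{g_l}\right).
\]
Writing $1/f_k-1/g_l=(g_l-f_k)/(f_kg_l)$ and using $\ord_x f_k=-p/q$, $\ord_x g_l=-p'/q'$, this equals $\sum_{k,l}\ord_x(g_l-f_k)+qq'(p/q+p'/q')=pq'+p'q+\sum_{k,l}\ord_x(g_l-f_k)$. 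Thus the theorem is equivalent to the identity $\mathrm{Irr}(\Hom_{K(\!(x)\!)}(V,W))=-\sum_{k,l}\ord_x(g_l-f_k)$.

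Next I would compute the left-hand side through exponential factors. Pulling $V$ and $W$ back to $K(\!(x^{1/N})\!)$ with $N=\mathrm{lcm}(q,q')$, the connection $\Hom(V,W)=V^{\vee}\otimes W$ decomposes into rank-one pieces whose exponential factors are the differences $\psi_l-\phi_k$, where $\phi_k,\psi_l$ are the exponential factors of $V,W$; consequently
\[
	\mathrm{Irr}(\Hom(V,W))=\sum_{k,l}\max\bigl(0,-\ord_x(\psi_l-\phi_k)\bigr).
\]
The key computation is that $\phi_k$ is the irregular part of a primitive of $x^{-1}f_k\,dx$, so in the variable $\tau=x^{1/N}$ the coefficient of $\psi_l-\phi_k$ at an exponent $e$ equals $N/e$ times the coefficient of $g_l-f_k$ at $e$. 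Since $N/e\neq 0$ for every $e<0$, the negative-exponent parts of $\psi_l-\phi_k$ and of $g_l-f_k$ have exactly the same support, whence $\max(0,-\ord_x(\psi_l-\phi_k))=\max(0,-\ord_x(g_l-f_k))$ for every pair. The required identity therefore reduces to $\sum_{k,l}\max(0,-\ord_x(g_l-f_k))=-\sum_{k,l}\ord_x(g_l-f_k)$, equivalently $\sum_{k,l}\max(\ord_x(g_l-f_k),0)=0$, that is, $\ord_x(g_l-f_k)\le 0$ for all $k,l$.

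The main obstacle is this last inequality, and it is precisely where the hypothesis $E_{f,q}\not\cong E_{g,q'}$ is used. Suppose $\ord_x(g_{l_0}-f_{k_0})>0$ for some pair, i.e. $f_{k_0}$ and $g_{l_0}$ agree in all terms of order $\le 0$. Then the polar parts of $\phi_{k_0}$ and $\psi_{l_0}$ coincide; by irreducibility the exponential factors of $V$ and $W$ have trivial stabiliser under the respective Galois groups, so a common polar part forces $q=q'$ and makes the two orbits $\{\phi_k\}$ and $\{\psi_l\}$ coincide, while the agreement of the order-zero terms gives matching residues. By the Levelt--Turrittin classification this yields $E_{f,q}\cong E_{g,q'}$, a contradiction. (The same argument with $\ord_x(g_{l_0}-f_{k_0})=\infty$ shows that $C_{f,q}$ and $C_{g,q'}$ share no branch, justifying the resultant formula.) Hence $\ord_x(g_l-f_k)\le 0$ for all pairs, the three reductions close up, and the theorem follows. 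I expect the careful bookkeeping of residues and of the exponent/coefficient dictionary between $f$ and $\phi$ to be the only genuinely delicate points; everything else is formal.
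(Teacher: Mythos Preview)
Your proof is correct and follows essentially the same route as the paper: both compute $I(C_{f,q},C_{g,q'})$ as $\sum_{k,l}\ord_x(1/f_k-1/g_l)=pq'+p'q+\sum_{k,l}\ord_x(g_l-f_k)$ and then match this against the irregularity identity $\mathrm{Irr}(\Hom(V,W))=-\sum_{k,l}\ord_x(g_l-f_k)$. You are in fact more careful than the paper in one respect---you explain why the hypothesis $E_{f,q}\not\cong E_{g,q'}$ guarantees $\ord_x(g_l-f_k)\le 0$ for every pair (so that the $\max(0,\cdot)$ in the slope formula for the irregularity can be dropped), a point the paper simply asserts.
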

\begin{thm}[Theorem \ref{milnumb}]
Let $E_{f,q}$ be an irreducible $K(\!(x)\!)$-connection with
	$\ord(f)=-p/q$. Then the Milnor number $\mu$ of 
	the associated curve $C_{f,q}$ is
	\[
		\mu=(2p-1)(q-1)
		-\mathrm{Irr}(\mathrm{End}_{K(\!(x)\!)}(V)).
	\]
\end{thm}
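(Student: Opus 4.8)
The plan is to reduce the computation of $\mu$ to the order of a discriminant and then to recognize that order as $\mathrm{Irr}(\mathrm{End}(V))$, in parallel with the mechanism behind Theorem~\ref{diffinv}. Since $E_{f,q}$ is irreducible, $C_{f,q}$ is an irreducible germ: it is the Weierstrass polynomial $\prod_{k=1}^{q}(y-\psi_k)$ in $y$, with $\psi_k=1/f_k(x^{\frac1q})$, and it carries the single Puiseux branch $x=t^{q},\ y=1/f(t)$. Because $\ord_x(\psi_k)=p/q>0$ we have $\psi_k(0)=0$, hence $C_{f,q}(0,y)=y^{q}$ and $I(C_{f,q},\{x=0\})=q$. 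I would start from the standard relation in plane-curve singularity theory between the Milnor number and the discriminant of the $y$-projection of a reduced Weierstrass polynomial, namely
\[
	\ord_x \mathrm{disc}_y(C_{f,q})=I(C_{f,q},\partial_y C_{f,q})=\mu+q-1,
\]
the first equality being $\mathrm{disc}_y=\pm\mathrm{Res}_y(C_{f,q},\partial_y C_{f,q})$ and the second the discriminant--Milnor formula (equivalently $\mu=2\delta$ together with the conductor formula for a plane branch). Thus it suffices to compute $\ord_x\mathrm{disc}_y(C_{f,q})$.

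Next I would compute this discriminant order directly from the roots. One has $\ord_x\mathrm{disc}_y(C_{f,q})=\sum_{k\neq l}\ord_x(\psi_k-\psi_l)$, summed over ordered pairs. Writing $\psi_k-\psi_l=(f_l-f_k)/(f_kf_l)$ and using $\ord_x(f_k)=\ord(f)=-p/q$ for every conjugate, each summand becomes $\ord_x(f_l-f_k)+2p/q$. As there are $q(q-1)$ ordered pairs, summation gives
\[
	\ord_x\mathrm{disc}_y(C_{f,q})=2p(q-1)+\sum_{k\neq l}\ord_x(f_k-f_l).
\]

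The final step is to identify $\sum_{k\neq l}\ord_x(f_k-f_l)=-\mathrm{Irr}(\mathrm{End}_{K(\!(x)\!)}(V))$, for which I would invoke the decomposition engine behind Theorem~\ref{diffinv} applied to $\mathrm{End}(V)=V^{\vee}\otimes V$: after pulling back along $x=u^{q}$ this connection is a direct sum of the rank-one elementary pieces attached to the differences $f_l-f_k$. Tracking the effect of the ramified pullback on irregularity, the $q$ diagonal pieces ($k=l$) are regular and contribute nothing, while each off-diagonal piece contributes $-\ord_x(f_k-f_l)$ to $\mathrm{Irr}(\mathrm{End}(V))$; irreducibility of $E_{f,q}$ is what guarantees that the $f_k$ are pairwise distinct, so that every off-diagonal difference is genuinely irregular. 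This is precisely the configuration that forced the hypothesis $E_{f,q}\not\cong E_{g,q'}$ in Theorem~\ref{diffinv} and that must now be treated on the diagonal. Substituting, $\ord_x\mathrm{disc}_y(C_{f,q})=2p(q-1)-\mathrm{Irr}(\mathrm{End}(V))$, and comparing with $\mu+q-1$ yields $\mu=2p(q-1)-(q-1)-\mathrm{Irr}(\mathrm{End}(V))=(2p-1)(q-1)-\mathrm{Irr}(\mathrm{End}(V))$, as claimed.

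I expect the main obstacle to be this last identification rather than the arithmetic: one must pin down that the regular part of $\mathrm{End}(V)$ consists of exactly the $q$ diagonal summands --- equivalently that, after normalizing $f$ to be purely polar, no off-diagonal difference $f_k-f_l$ is regular --- and that the irregularity of $\mathrm{End}(V)$ in the ramified Komatsu--Malgrange sense is $\sum_{k\neq l}(-\ord_x(f_k-f_l))$ with the correct normalization. A secondary point to verify carefully is the discriminant--Milnor relation $\ord_x\mathrm{disc}_y=\mu+q-1$ in the tangential case $p<q$; this is most safely checked by recomputing $I(C_{f,q},\partial_y C_{f,q})$ from the parametrization $x=t^{q},\ y=1/f(t)$, where $\partial_y C_{f,q}$ restricts on the branch to $\prod_{j=1}^{q-1}(\psi(t)-\psi(\zeta_{q}^{j}t))$, reproducing the same order uniformly in $p$ and $q$.
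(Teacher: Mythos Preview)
Your proposal is correct and follows essentially the same route as the paper: the relation $I(C_{f,q},\partial_y C_{f,q})=\mu+q-1$ that you invoke is precisely Lemma~\ref{invariants2} with $n=I(C_{f,q},x)=q$, and your computation of $\ord_x\mathrm{disc}_y(C_{f,q})=\sum_{k\neq l}\ord_x(\psi_k-\psi_l)=2p(q-1)-\mathrm{Irr}(\mathrm{End}(V))$ reproduces the second assertion of Theorem~\ref{diffinv}. The paper's one-line proof simply cites these two ingredients, whereas you have unfolded them inline.
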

To an irreducible $E_{f,q}$, 
we associate a sequence of integers as follows.
Let us write $f(x^{\frac{1}{q}})=
a_{n}x^{\frac{n}{q}}+a_{n+1}x^{\frac{n+1}{q}}+\cdots$.
Define 
\[
	-\beta_{1}=\mathrm{min}\{i\mid a_{i}\neq 0,\, q\not| i\},\quad
	e_{1}=\mathrm{gcd}(q,\beta_{1}).
\]
Also define
\[
	-\beta_{k}=\mathrm{min}\{i\mid a_{i}\neq 0,\, e_{k-1}\not| i\},\quad
	e_{k}=\mathrm{gcd}(e_{k-1},\beta_{k}),
\]
inductively till we reach $g$ with $e_{g}=1$.
Then we call the sequence of the integers
\[
	(q,p; \beta_{1},\ldots,\beta_{g}),
\]
the dual Puiseux characteristic of $E_{f,q}$.

The explicit formula of the local Fourier transforms is 
independently obtained by 
two authors, Fang  in \cite{Fan} and Sabbah in \cite{Sab}, from the 
different point of view.  Fang uses an algebraic computation and 
Sabbah uses a technique of the blowing up of curves.
After their works Graham-Squire gave an simple description of their 
formula in \cite{Gra}.
We shall see that Graham-Squire's description leads us to 
a formula which may connect Fang's and Sabbah's different approaches.
Namely, we show that the local Fourier transform 
of $E_{f,q}$ can be seen as the blowing up of the
associated curve $C_{f,q}$, see Proposition \ref{fourierblowup}.
Furthermore as an application of this formula, 
the blowing up technique can be useful to show the following 
necessary and sufficient condition for the existence of a resolution of 
ramified irregular singularities of irreducible connections via 
local Fourier transforms.
\begin{thm}[Theorem \ref{main}]
Let $E_{f,q}$ be an irreducible connection with the dual Puiseux 
characteristic $(q,p;\beta_{1},\ldots,\beta_{g})$. Then 
we can reduce $E_{f,q}$ to a rank 1 connection by a finite iteration of 
local Fourier
transforms and additions if and only if we have
\[
	e_{i-1}\equiv \pm e_{i}\ (\mathrm{mod}\,\beta_{i})
\]
for all $i=1,\ldots,g$. Here $e_{0}=q$.
\end{thm}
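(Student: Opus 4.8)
The plan is to transport the whole question to the geometry of the germ $C_{f,q}$ by means of Proposition \ref{fourierblowup}, under which a local Fourier transform of $E_{f,q}$ becomes a blowing up of $C_{f,q}$, while an addition by a rank one connection $E_{h,1}$ with $h\in K(\!(x)\!)$ becomes the clearing of the integer-power part of $f$ and leaves the ramification $q$ untouched. Under this dictionary a rank one connection corresponds to a smooth branch, so ``reducible to rank $1$'' means ``the branch $C_{f,q}$ can be carried to a smooth one by the blowings up realized by Fourier transforms.'' First I would set up the bookkeeping: using additions to clear all integer-power ($q\mid i$) terms, I normalise $f$ so that its leading term is the top characteristic term $x^{-\beta_{1}/q}$, whence the slope becomes $\beta_{1}/q$ and $\mathrm{Irr}(E_{f,q})=\beta_{1}$. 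Since each local Fourier transform preserves the irregularity, and since the explicit formula makes the three transforms act on the rank by $q\mapsto q(s-1)=\beta_{1}-q$, $q\mapsto q(s+1)=q+\beta_{1}$ and $q\mapsto q(1-s)=q-\beta_{1}$ for $s=\beta_{1}/q$, the set of ranks reachable while the top characteristic term and the value $\beta_{1}$ stay fixed is exactly $\{q'\mid q'\equiv\pm q\ (\mathrm{mod}\,\beta_{1})\}$.

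Second, I would treat a single characteristic level. To descend to the next Puiseux pair the ramification must be brought down from $e_{0}=q$ to $e_{1}=\gcd(q,\beta_{1})$, the part of the ramification carried by the lower pairs; by the reachability computation this is possible if and only if $e_{1}\equiv\pm q\ (\mathrm{mod}\,\beta_{1})$, that is $e_{0}\equiv\pm e_{1}\ (\mathrm{mod}\,\beta_{1})$. The two signs are forced to appear because the slope-lowering and the slope-raising Fourier transforms produce the two residues $\pm q$. For necessity I would note that $\beta_{1}$ is constant throughout the top-level manipulation (additions re-clear to preserve the leading term, Fourier transforms preserve the irregularity $\beta_{1}$), so the class $\pm(\text{ramification})\ (\mathrm{mod}\,\beta_{1})$ is an invariant; hence if $e_{0}\not\equiv\pm e_{1}\ (\mathrm{mod}\,\beta_{1})$ the ramification can never be reduced to $e_{1}$, and a fortiori not to $1$.

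Third, I would run an induction on the number $g$ of characteristic pairs. Granting $e_{0}\equiv\pm e_{1}\ (\mathrm{mod}\,\beta_{1})$, the top-level reduction turns $E_{f,q}$ into an irreducible connection $E_{f',e_{1}}$, and the point is to show that its dual Puiseux characteristic is precisely the tail $(e_{1},\ldots;\beta_{2},\ldots,\beta_{g})$, so that the remaining requirements $e_{i-1}\equiv\pm e_{i}\ (\mathrm{mod}\,\beta_{i})$ for $i\ge 2$ are exactly the conditions governing the residual problem and the induction closes. Dividing the $i$-th level by $e_{i}$ rewrites $e_{i-1}\equiv\pm e_{i}\ (\mathrm{mod}\,\beta_{i})$ as $m_{i}\equiv\pm 1\ (\mathrm{mod}\,n_{i})$ with $m_{i}=e_{i-1}/e_{i}$ and $n_{i}=\beta_{i}/e_{i}$ coprime, which is the single-level statement applied to the $i$-th cable and matches the iterated torus knot structure of $C_{f,q}$.

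The hard part will be this third step: controlling not just the leading term but the full Puiseux expansion of $f$ through the Fourier transforms, i.e. proving that the higher characteristic exponents ``ride along'' unchanged (up to the expected rescaling) when the top ramification is reduced. This is where Proposition \ref{fourierblowup} has to be used beyond the leading term, transporting the classical transformation law of Puiseux characteristics under blowing up through the Fourier--blowup dictionary. The subtlety in the ``only if'' direction is to verify that the residue-class invariant at each level is genuinely unaffected by the excursions $q\mapsto q+\beta_{1}$ that temporarily raise the rank and by the processing of the other levels, so that being stuck at one level cannot be circumvented; establishing this invariance rigorously is the crux of the argument.
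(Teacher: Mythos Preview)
Your plan for sufficiency is exactly the paper's: normalise by additions to $p=\beta_{1}$, then use the rank changes $q\mapsto q\pm\beta_{1}$ and $q\mapsto\beta_{1}-q$ to reach $e_{1}$, and iterate. What you call the ``hard part'' --- that the lower exponents $\beta_{2},\ldots,\beta_{g}$ ride through unchanged --- is isolated in the paper as Proposition~\ref{fourierpuiseux}, which computes the \emph{full} dual Puiseux characteristic of each $\mathcal{F}^{(*,*)}(E_{f,q})$; its proof is precisely your proposed route, pulling the classical blow-up transformation law through the dictionary of Proposition~\ref{fourierblowup} (together with Lemma~\ref{wall}). So there is no divergence on that half.

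Where the paper differs is in the necessity direction. Rather than set up the levelwise invariant $\pm(\text{rank})\ (\mathrm{mod}\,\beta_{i})$ and check it is preserved under every possible move, the paper argues \emph{backwards}: it starts from a rank~$1$ connection $(1,p;\,)$ and enumerates, using Proposition~\ref{fourierpuiseux} for the inverse transforms, every dual Puiseux characteristic that can be produced by any sequence of $(\mathcal{F}^{(*,*)})^{-1}$ and additions, verifying at each branching that the resulting characteristic satisfies the congruences. This buys a clean necessity proof without the delicate point you flag as ``the crux'': your forward invariant argument must also cope with moves that do \emph{not} first clear the integer-power terms, in which case (by Proposition~\ref{fourierpuiseux}(2),(3) with $p\neq\beta_{1}$) a new top exponent equal to $p$ is inserted and the modulus of the invariant changes. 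That case can be handled, but the backward enumeration sidesteps it entirely; conversely, your invariant viewpoint makes the conceptual reason for the $\pm$ in the congruence more transparent.
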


In the final section, we shall moreover consider the following problem.
We take $K$ as the field of complex numbers $\mathbb{C}$.
Let us fix a dual Puiseux characteristic $(q,p;\beta_{1},\ldots,\beta_{g})$ and
consider a family 
\begin{multline*}
	\mathcal{E}=\{E_{f,q}\colon \text{irreducible connection}\,
		|\, \\
		E_{f,q}\text{ has the  
		dual Puiseux characteristic }
		(q,p;\beta_{1},\ldots,\beta_{g})
	\}.
\end{multline*}
We look for an invariant of this family whose elements are not isomorphic 
as connections in general.
For instance, it is well known that if two
plane curve germs have the same Puiseux characteristic, then the knot 
structures 
of these curves around the singular point are isotopic.
Namely the Puiseux characteristic gives an topological invariant of 
plane curve germs.
The aim of this section is to look for the analogy for connections.

Let us  fix an element 
$E_{f,q}\in \mathcal{E}$ and define $\tilde{f}(x^{\frac{1}{q}})=
\sum_{i=1}^{g}a_{\beta_{i}}x^{-\frac{\beta_{i}}{q}}$ where we write 
$f(x^{\frac{1}{q}})=a_{p}x^{-\frac{p}{q}}+a_{p-1}x^{-\frac{p-1}{q}}+\cdots$.
Also define $\tilde{f}_{i}(x^{\frac{1}{q}})
=\tilde{f}(\zeta_{q}^{i}x^{\frac{1}{q}})$ for $i=1,\ldots,q$.
If $x$ moves in a small circle 
$S_{\eta}=\{z\in \mathbb{C}\mid 
|z|=\eta\}$, 
the order of sizes of $\mathrm{Re}(\tilde{f}_{i}(x^{\frac{1}{q}}))$
for $i=0,\ldots,q-1$ 
change according to the argument of $x$.
Namely, we have $\mathrm{Re}(\tilde{f}_{i}(x^{\frac{1}{q}}))<
\mathrm{Re}(\tilde{f}_{j}(x^{\frac{1}{q}}))$ for an argument,
 $\mathrm{Re}(\tilde{f}_{i}(x^{\frac{1}{q}}))>
\mathrm{Re}(\tilde{f}_{j}(x^{\frac{1}{q}}))$ 
for another argument and there also are some arguments for which
these are incomparable.
This is one of the reasons why the Stokes phenomenon happens.
Thus to understand the Stokes phenomenon of the connections 
over $\mathbb{C}(\!\{x\}\!)$ formally isomorphic to $E_{f,q}$,
we study the closed curve
\[
	\mathrm{St}=\left\{(x,y)\,\middle|\, x\in S_{\eta},\,
	y=\mathrm{Re}(\tilde{f}(x^{\frac{1}{q}}))
	\right\}.
\]
Then Theorem \ref{iterated braid} and Corollary \ref{conjugate} 
show that the curve $\mathrm{St}$ has 
an invariant which depends only on the dual Puiseux characteristic and 
is independent of $E_{f,q}\in \mathcal{E}$.
The invariant is obtained from the structure of iterated torus knot of 
the associated curve germ $C_{\tilde{f},q}(x,y)$.

The space of the Stokes matrices 
of $\mathbb{C}(\!\{x\}\!)$-connections
which is formally isomorphic to $E_{f,q}$ is determined by the curve 
$\mathrm{St}$ (see Theorem \ref{riemann-hilbert} for instance).
Thus we may say that our theorem also gives an `topological' invariant of 
wild fundamental groupoid \cite{MarRam} and wild character varieties 
\cite{Boa}.

\section{Singularities of plane curve germs}
In this section we give basic definitions and facts
on singularities of plane curve germs, 
which are found in standard references
\cite{BrKn, Hef, Wal} for example.
Let $K$ be an algebraically closed field of characteristic zero.
Let $K[x], K[\![x]\!]$ and  $K(\!(x)\!)$
denote the polynomial ring, the ring of formal power series and 
the field of formal Laurent series. 
For $f(x)=a_{n}x^{n}+a_{n+1}x^{n+1}+\cdots\in K(\!(x)\!)$, 
we call the lowest exponent with nonzero coefficient the {\em order} of $f$ 
and denote by $\mathrm{ord}_{x}(f)$, i.e.,
\[
	\mathrm{ord}_{x}(f)=\mathrm{min}\{i\mid a_{i}\neq 0\}.
\]

Similarly the multi-variable analogue,  
$K[x,y], K[\![x,y]\!]$ are defined. 
We can decompose $f(x,y)\in K[\![x,y]\!]$ as 
the sum of homogeneous terms,
\[
	f(x,y)=\cdots+f_{k}(x,y)+f_{k+1}(x,y)+\cdots,
\]
where $f_{k}(x,y)\in K[x,y]$ are homogeneous polynomials of degree $k$.
The least integer $k_0$ 
such that $f_{k}(x,y)\neq 0$ is called {\em multiplicity} of $f$.

\begin{df}\normalfont A {\em plane curve germ} is the equivalence class of a 
non-invertible element $f$ of $K[\![x,y]\!]\backslash\{0\}$.
Here $f,g\in K[\![x,y]\!]$ are equivalent when there is a unit
$u\in K[\![x,y]\!]$ such that $f=ug$.
	A plane curve germ of multiplicity one is called {\em regular}.
	When the multiplicity is greater than one, the curve is called 
	{\em singular}.
\end{df}
\subsection{Good parametrizations}
Suppose that  $f(x,y)\in K[\![x,y]\!]\backslash\{0\}$ 
is {\em regular of order} $m>0$ with respect to $y$,
i.e., $f(0,y)\in K[\![y]\!]$ has the order $m$. 
Then the Weierstrass preparation theorem says that there exists an
unit $u\in K[\![x,y]\!]$ such that 
\[
	f(x,y)=u\left(y^{m}+\sum_{r=0}^{m-1}a_{r}(x)y^{r}\right)
\]
where $a_{r}(x)\in K[\![x[\!]$.

Then Puiseux's theorem tells us that $f$ can be decomposed as
\[
	f(x,y)=u\prod^{m}_{j=1}\left(y-g_{j}(x^{\frac{1}{m_{j}}})\right),
\]
where $g_{j}(t)\in K[\![t]\!]$. 
\begin{df}\normalfont
Let $f(x,y)$ be an irreducible element in $K[\![x,y]\!]$ 
	and regular of order $l>0$ with respect of $y$.
	Then we see that the equation $f(x,y)=0$ admits at least one solution
	of the form $y=\phi(x^{\frac{1}{m}})$ with $\phi(t)\in K[\![t]\!]$.
	Here we may assume 
	\[
		m=\mathrm{min}\left\{r\in \mathbb{N}\,\middle|\, 
	\phi \in K(\!(x^{\frac{1}{r}})\!) \right\}.
	\]
	Then the parametrization $x=t^{m}, y=\phi(t)$ of the curve germ is 
	called the {\em good parametrization}.
\end{df}
Conversely a good parametrization defines an irreducible 
curve germ as follows. Let $x=t^{m}, y=\phi(t)$ 
be a good parametrization and define 
\[
	f(x,y)=\prod_{i=1}^{m}\left(y-\phi(\zeta_{m}^{i}x^{\frac{1}{m}})
	\right),
\]
where $\zeta_{m}$ is a primitive $m$-th root of unity.

Let $x=t^{m}, y=\sum_{i\ge n}a_{i}t^{i}\ (a_{n}\neq 0)$ 
be a good parametrization. Here we may assume $n\ge m$ because 
if not, we can take another parametrization $y=u^{n}, 
x=\sum_{i\ge m}b_{i}u^{i}\ (b_{m}\neq 0)$ by
solving $u^{n}=\sum_{i\ge n}a_{i}t^{i}$.
Define $\beta_{1}$ to be the first exponent of $\sum_{i\ge n}a_{i}t^{i}$
which is indivisible by $m$ and $e_{1}$ to be the greatest common divisor
of $m$ and $\beta_{1}$, i.e.,
\[
	\beta_{1}=\mathrm{min}\{i\mid a_{i}\neq 0,\, m\!\not|\,i\},\quad 
	e_{1}=\mathrm{gcd}(m,\beta_{1}).
\]
Inductively define 
\[
	\beta_{k}=\mathrm{min}\{i\mid a_{i}\neq 0,\, e_{k-1}\!\not|\,i\},\quad 
	e_{k}=\mathrm{gcd}(e_{k-1},\beta_{k})
\]
till we reach $g$ with $e_{g}=1$.
\begin{df}\normalfont
	For the above good parametrization, the sequence of positive integers
	\[
		(m;\beta_{1},\ldots,\beta_{g})
	\]
	is called the {\em Puiseux characteristic} of the curve germ.
\end{df}
\subsection{Blowing up}
Let us recall the blowing up of the affine space $\mathbb{A}^{2}(K)$.
\begin{df}\normalfont
	Let us define a subspace of 
	$\mathbb{A}^{2}(K)\times \mathbb{P}^{1}(K)$ by
	\[
		T=\{(x,y, (\xi\colon \eta))\mid x\eta=y\xi\},
	\]
	where $(\xi\colon\eta)$ is the homogeneous coordinate of 
	$\mathbb{P}^{1}(K)$.
	Then the natural projection $\pi\colon T\rightarrow \mathbb{A}^{2}(K)$
	is called the {\em blowing up} of $\mathbb{A}^{2}(K)$ with the center
	$O=(0,0)$.
\end{df}

The projective line $\mathbb{P}^{1}(K)$ 
is covered by two open sets $U_{1}=\{(\xi\colon \eta)
\mid \xi\neq 0\}$ and $U_{2}=\{(\xi\colon \eta)\mid \eta\neq 0\}$ which
are isomorphic to $\mathbb{A}(K)$. 
Hence we can cover $T$ by $T_{1}=T\cap (\mathbb{A}^{2}(K)\times U_{1})$ and
$T_{2}=T\cap (\mathbb{A}^{2}(K)\times U_{2})$. 
Both $T_{1}$ and $T_{2}$ can be seen as $\mathbb{A}^{2}(K)$ by
\begin{align*}
	&\begin{array}{cccc}
	\rho_{1}\colon&T_{1}&\longrightarrow&\mathbb{A}^{2}(K)\\
		&(x,y,(\xi\colon\eta))&\longmapsto&
		(X,Y)=(x,\frac{\eta}{\xi})
	\end{array},\\
	&\begin{array}{cccc}
	\rho_{2}\colon&T_{2}&\longrightarrow&\mathbb{A}^{2}(K)\\
		&(x,y,(\xi\colon\eta))&\longmapsto&
		(X,Y)=(\frac{\xi}{\eta},y).
	\end{array}
\end{align*}
Thus the restrictions of $\pi$ on $T_{1}$ and $T_{2}$ give transformations
in $\mathbb{A}^{2}(K)$, say $\sigma_{1}=\pi\circ\rho_{1}^{-1}$ and $\sigma_{2}=
\pi\circ\rho_{2}^{-1}$. 
\begin{df}\normalfont Transformations in $\mathbb{A}^{2}(K)$ defined by
	\begin{align*}
		&\begin{array}{cccc}
		\sigma_{1}\colon&\mathbb{A}^{2}(K)&\longrightarrow
		&\mathbb{A}^{2}(K)\\
			&(x_{1},y_{1})&\longmapsto&(x,y)=(x_{1},x_{1}y_{1})
		\end{array},\\
		&\begin{array}{cccc}
		\sigma_{2}\colon&\mathbb{A}^{2}(K)&\longrightarrow
		&\mathbb{A}^{2}(K)\\
			&(x_{1},y_{1})&\longmapsto&(x,y)=(x_{1}y_{1},y_{1})
		\end{array},
	\end{align*}
	are called {\em quadratic transforms}.
	These induce homomorphisms
	\begin{align*}
		&\begin{array}{cccc}
	\sigma_{1}\colon&K[\![x,y]\!]&\longrightarrow
	&K[\![x_{1},y_{1}]\!]\\
			&x&\longmapsto&x_{1}\\
			&y&\longmapsto&x_{1}y_{1}
		\end{array},
		&\begin{array}{cccc}
	\sigma_{2}\colon&K[\![x,y]\!]&\longrightarrow
		&K[\![x_{1},y_{1}]\!]\\
			&x&\longmapsto&x_{1}y_{1}\\
			&y&\longmapsto&y_{1}
		\end{array}.
	\end{align*}
	These are called quadratic transforms as well.
\end{df}
If $f(x,y)$ has the multiplicity $k$, then $\sigma_{1}(f)(x_{1},y_{1})$ and 
$\sigma_{2}(f)(x_{1},y_{1})$ can
be divided by $x_{1}^{k}$ and $y_{1}^{k}$ respectively.
\begin{df}\normalfont
	Let $f(x,y)$ be a curve germ with the multiplicity $k$. Then
	$\sigma_{1}^{*}(f)(x_{1},y_{1})=
	\frac{1}{x_{1}^{k}}\sigma_{1}(f)(x_{1},y_{1})$ and 
	$\sigma_{2}^{*}(f)(x_{1},y_{1})=
	\frac{1}{y_{1}^{k}}\sigma_{2}(f)(x_{1},y_{1})$ 
	are called {\em strict transforms} of $f$.
\end{df}

Suppose that  an irreducible
curve germ $f$ has a good parametrization 
$x=t^{m},\ y=a_{n}t^{n}+a_{n+1}t^{n+1}+\cdots$ where $a_{n}\neq 0$ and $n\ge m$.
Then the strict transform $\sigma_{1}^{*}(f)(x_{1},y_{1})$ has the good
parametrization $x_{1}=t^{m},\ y_{1}=a_{n}t^{n-m}+a_{n+1}t^{n-m+1}+\cdots$.
Here we note that $\sigma_{1}(f)^{*}(0,0)\neq 0$, i.e., $\sigma_{1}^{*}$
is invertible if $n=m$. 
Thus we define
\[
	\sigma^{*}(f)(x_{1},y_{1})=\begin{cases}
		\sigma_{1}^{*}(f)(x_{1},y_{1})&\text{if }n>m,\\
		\sigma_{1}^{*}(f)(x_{1},y_{1}-a_{n})&\text{if }n=m,
	\end{cases}
\]
and call $\sigma^{*}(f)(x_{1},y_{1})$ the {\em blowing up} of 
$f$. 
If $f$ has a good parametrization 
$y=u^{n},\ x=b_{m}u^{m}+b_{m+1}u^{m+1}+\cdots$ 
where $b_{m}\neq 0$ and $m\ge n$, then we define 
\[
	\sigma^{*}(f)(x_{1},y_{1})=\begin{cases}
		\sigma_{2}^{*}(f)(x_{1},y_{1})&\text{if }m>n,\\
		\sigma_{2}^{*}(f)(x_{1}-b_{m},y_{1})&\text{if }m=n,
	\end{cases}
\]
similarly.

Let us see how the blowing up changes Puiseux 
characteristics of curve germs.
\begin{prop}[see Theorem 3.5.5 in \cite{Wal} for example]
	For an irreducible curve germ $f(x,y)$ with 
	the Puiseux characteristic $(m; \beta_{1},\ldots,\beta_{g})$,
	we can compute the Puiseux characteristic
	of $\sigma^{*}(f)$ as follows.
	\begin{enumerate}
		\item If $\beta_{1}>2m$,  
			\[
				(m; \beta_{1}-m,\ldots,\beta_{g}-m).
			\]
		\item If $\beta_{1}<2m$ and $(\beta_{1}-m)\! \not|\, m$, 
			\[
				(\beta_{1}-m; m,\beta_{2}-\beta_{1}+m,
				\ldots,\beta_{g}-\beta_{1}+m).
			\]
		\item If $(\beta_{1}-m)\,|\,m$,
			\[
				(\beta_{1}-m; \beta_{2}-\beta_{1}+m,\ldots,
				\beta_{g}-\beta_{1}+m).
			\]
	\end{enumerate}
\end{prop}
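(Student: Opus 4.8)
The plan is to prove the formulas by carrying the good parametrization of $f$ through the blowing up and re-reading the Puiseux characteristic off the transformed parametrization. I would begin with two preliminary observations. Because the good parametrization satisfies $n\ge m$, the multiplicity of $f$ equals $m$, so $\sigma^{*}(f)$ is computed from the quadratic transform $\sigma_{1}\colon(x,y)\mapsto(x_{1},x_{1}y_{1})$. Substituting $x=t^{m}$ and $y=\sum_{i\ge n}a_{i}t^{i}$ gives $x_{1}=t^{m}$ and $y_{1}=\sum_{i}a_{i}t^{i-m}$, so the quadratic transform simply lowers every exponent of the $y$-series by $m$; when $n=m$ the resulting constant term is exactly the one removed by the translation built into the definition of $\sigma^{*}(f)$. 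Thus everything reduces to recomputing the characteristic of the parametrization $x_{1}=t^{m}$, $y_{1}=\sum_{i>m}a_{i}t^{i-m}$, possibly after exchanging the two coordinates.

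For the first case $\beta_{1}>2m$, I would first note that the $y$-series has no nonzero term with exponent in the open interval $(m,2m)$: such an exponent would be indivisible by $m$ and smaller than $\beta_{1}$, which is impossible. Hence the transformed $y$-series has order at least $m$, and $x_{1}=t^{m}$, $y_{1}=\sum a_{i}t^{i-m}$ is again a good parametrization with leading variable $x_{1}$ and the same ramification index $m$. Since each $e_{k}$ divides $m$, subtracting $m$ from an exponent preserves its residue modulo $m$ and modulo every $e_{k}$; therefore the first exponent indivisible by $m$ becomes $\beta_{1}-m$, the later characteristic exponents become $\beta_{k}-m$, and the gcd sequence $e_{k}$ together with the length $g$ are unchanged. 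This yields $(m;\beta_{1}-m,\ldots,\beta_{g}-m)$.

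For the remaining cases $\beta_{1}\le 2m$, the exponents below $\beta_{1}$ are divisible by $m$ and bounded by $2m$, so $m$ is the only one that can occur; consequently the transformed $y$-series has order $n'=\beta_{1}-m<m$. I would then exchange coordinates, taking $y_{1}$ as the new $x$-variable and $x_{1}=t^{m}$ as the new $y$-variable. The order of $y_{1}$ in $t$ is $\beta_{1}-m$, so this becomes the new ramification index, and the new characteristic is obtained by expanding $x_{1}=t^{m}$ as a Puiseux series in $y_{1}$: inverting $y_{1}=a_{\beta_{1}}t^{\beta_{1}-m}(1+\cdots)$ to write $t$ as a series in $y_{1}^{1/(\beta_{1}-m)}$ and substituting into $t^{m}$ produces a series whose leading exponent is $m$ and whose higher characteristic exponents come from the tail $\sum_{i>\beta_{1}}a_{i}t^{i-m}$, appearing as $\beta_{i}-\beta_{1}+m$. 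The split between the two cases is exactly whether the leading exponent $m$ is a genuine characteristic exponent: if $(\beta_{1}-m)\nmid m$ it is indivisible by the new ramification index $\beta_{1}-m$ and so is the first characteristic exponent, giving $(\beta_{1}-m;m,\beta_{2}-\beta_{1}+m,\ldots,\beta_{g}-\beta_{1}+m)$; if $(\beta_{1}-m)\mid m$ it is divisible and contributes nothing, giving $(\beta_{1}-m;\beta_{2}-\beta_{1}+m,\ldots,\beta_{g}-\beta_{1}+m)$.

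The step I expect to be the main obstacle is the coordinate exchange in these last two cases, that is, controlling the Puiseux inversion precisely enough to see that the higher characteristic exponents are exactly $\beta_{i}-\beta_{1}+m$ and that the new gcd sequence comes out correctly. I would handle the first new gcd using the identity $\gcd(\beta_{1}-m,m)=\gcd(\beta_{1}-m,\beta_{1})$, and then compare the exponent support of $t^{m}$ expanded in $y_{1}$ with that of the original series one characteristic pair at a time, so that the analysis of the remaining exponents reduces to the same kind of inversion for a shorter characteristic sequence.
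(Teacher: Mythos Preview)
The paper does not prove this proposition; it is quoted from Wall's book (Theorem~3.5.5 there) and stated without proof. Your plan is correct and is in fact the same argument Wall gives: follow the good parametrization through the quadratic transform, read off the new characteristic directly when $\beta_{1}>2m$, and when $\beta_{1}<2m$ swap coordinates and analyse the inversion.

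On the step you flag as the main obstacle, the clean device is exactly the semigroup lemma that the paper later records as Lemma~\ref{wall} (Wall's Lemma~3.5.4). Rather than inverting ``one characteristic pair at a time,'' one packages the constraint on exponents as a subsemigroup $S\subset\mathbb{Z}_{\ge 0}$ with distinguished set $S_{0}=\{\beta_{k}-\beta_{1}\mid k\ge 1\}$ (shifted so the leading exponent sits at $0$), and then uses that taking $m$-th powers, reciprocals, and compositional inverses all preserve the classes $\mathcal{O}_{S}$ and $\mathcal{O}_{S}^{*}$. This yields the new characteristic exponents $\beta_{k}-\beta_{1}+m$ in one stroke, and the gcd computation you mention, $\gcd(\beta_{1}-m,m)=\gcd(\beta_{1}-m,\beta_{1})=e_{1}$, then separates cases~(2) and~(3). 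So your outline is right; the only refinement is to replace the inductive inversion sketch by the semigroup bookkeeping, which the paper itself invokes in the proof of Proposition~\ref{puiseux}.
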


\subsection{Some invariants of curves}

\begin{df}\normalfont
	Let $f,g$ be curve germs. Then
	the integer
	\[
		I(f,g)=\mathrm{dim}_{K}K[\![x,y]\!]/
		\langle f,g\rangle
	\]
	is called the {\em intersection number} of $f$ and $g$. Here
	$\langle f,g\rangle$ is the ideal of $K[\![x,y]\!]$
	generated by $f,g$.
\end{df}
\begin{df}\normalfont
	Let $f(x,y)$ be a curve germ. Then the integer 
	\[
		\mu=I\left(\frac{\partial f}{\partial x},\frac{\partial f}
		{\partial y}\right)
	\]
	is called the {\em Milnor number} of $f$.
\end{df}

These integers can be computed from good parametrizations and 
Puiseux characteristics as follows, see the section 4 in \cite{Wal} and 
the section 7.4 in \cite{Hef}. 
\begin{lem}\label{invariants}
	Let $f(x,y)$ be an irreducible curve germ with a good 
	parametrization 
	$x=t^{m},\ y=\phi(t)=a_{n}t^{n}+a_{n+1}t^{n+1}+\cdots$  $(n\ge m)$,
	and the Puiseux characteristic $(m; \beta_{1},\ldots,\beta_{g})$.
	\begin{enumerate}
		\item For a curve germ $g(x,y)\neq f(x,y)$, the intersection
			number $I(f,g)$ is equal to the order
			of $g(t^{m},\phi(t))$.
		\item The Milnor number of $f$ is 
			\[
				\mu=\sum_{i=1}^{g}(e_{i-1}-e_{i})(\beta_{i}-1).
			\]
	\end{enumerate}
\end{lem}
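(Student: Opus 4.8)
The plan is to handle the two parts in turn, with the Milnor number computation built on part (1).

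\emph{Part (1).} I would realize the intersection number through the normalization of the local ring of $f$. Since $f$ is irreducible, the substitution homomorphism $\psi\colon K[\![x,y]\!]\to K[\![t]\!]$, $x\mapsto t^{m}$, $y\mapsto\phi(t)$, has kernel exactly $(f)$, so it identifies $R:=K[\![x,y]\!]/(f)$ with the subring $K[\![t^{m},\phi(t)]\!]\subset K[\![t]\!]$; minimality of $m$ in the good parametrization guarantees that $\tilde R:=K[\![t]\!]$ is the integral closure of $R$, a finite $R$-module, so that the $\delta$-invariant $\dim_{K}\tilde R/R$ is finite. Writing $\bar g$ for the image of a germ $g\notin(f)$ in $R$ (a non-zero-divisor), I note first that $I(f,g)=\dim_{K}R/\bar g R$. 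Applying the snake lemma to multiplication by $\bar g$ on the exact sequence $0\to R\to\tilde R\to C\to 0$ with $C:=\tilde R/R$, and using that an endomorphism of the finite-dimensional space $C$ satisfies $\dim_{K}\Ker=\dim_{K}\mathrm{coker}$, I obtain $\dim_{K}R/\bar g R=\dim_{K}\tilde R/\bar g\tilde R$. Finally $\dim_{K}\tilde R/\bar g\tilde R=\dim_{K}K[\![t]\!]/(g(t^{m},\phi(t)))=\mathrm{ord}_{t}\,g(t^{m},\phi(t))$, which is the assertion.

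\emph{Part (2).} I would reduce the Milnor number to an intersection number already covered by part (1). The germ here has multiplicity $m$ (since $n\ge m$), and the standard relation is $\mu=I(f,f_{y})-m+1$ (see \cite{Wal,Hef}), so it suffices to compute $I(f,f_{y})$. By part (1), $I(f,f_{y})=\mathrm{ord}_{t}\,f_{y}(t^{m},\phi(t))$. Differentiating the factorization $f(x,y)=\prod_{i=1}^{m}(y-\phi(\zeta_{m}^{i}x^{\frac{1}{m}}))$ and substituting $x=t^{m}$, $y=\phi(t)$ (so $x^{\frac{1}{m}}=t$), every summand of $f_{y}$ except the one omitting the factor $(y-\phi(t))$ vanishes, whence
\[
	f_{y}(t^{m},\phi(t))=\prod_{i=1}^{m-1}\bigl(\phi(t)-\phi(\zeta_{m}^{i}t)\bigr),
\]
so that $I(f,f_{y})=\sum_{i=1}^{m-1}\mathrm{ord}_{t}\bigl(\phi(t)-\phi(\zeta_{m}^{i}t)\bigr)$.

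\emph{The combinatorial heart, and the step I expect to be most delicate,} is the bookkeeping of these orders against the Puiseux characteristic. Writing $\phi(t)=\sum_{k}a_{k}t^{k}$ and $d=\gcd(i,m)$, one has $\zeta_{m}^{ik}=1$ exactly when $(m/d)\mid k$, so $\mathrm{ord}_{t}(\phi(t)-\phi(\zeta_{m}^{i}t))=\min\{k: a_{k}\neq 0,\ (m/d)\nmid k\}$. I would show this order equals $\beta_{j}$ precisely when the divisor $m/d$ satisfies $(m/d)\mid e_{j-1}$ and $(m/d)\nmid e_{j}$, using the defining property that every exponent with $a_{k}\ne0$ below $\beta_{j}$ is divisible by $e_{j-1}$ while $e_{j-1}\nmid\beta_{j}$; this also confirms the minimizing exponent is always a characteristic exponent. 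Counting, the indices $i\in\{1,\dots,m-1\}$ with $(m/d)\mid e_{j-1}$ are the multiples of $m/e_{j-1}$, of which there are $e_{j-1}-1$; subtracting those with $(m/d)\mid e_{j}$ leaves exactly $e_{j-1}-e_{j}$ indices contributing the value $\beta_{j}$. Hence $I(f,f_{y})=\sum_{j=1}^{g}(e_{j-1}-e_{j})\beta_{j}$, and since $\sum_{j=1}^{g}(e_{j-1}-e_{j})=e_{0}-e_{g}=m-1$, the relation $\mu=I(f,f_{y})-m+1$ telescopes to $\mu=\sum_{j=1}^{g}(e_{j-1}-e_{j})(\beta_{j}-1)$. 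The main obstacle is making this divisibility bookkeeping rigorous—in particular verifying that the counts genuinely partition $\{1,\dots,m-1\}$ according to the transitions of the decreasing chain $e_{0}>e_{1}>\cdots>e_{g}$.
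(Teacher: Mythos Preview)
Your proof is correct. Note, however, that the paper does not actually prove Lemma~\ref{invariants}: it is stated as a known result with references to Wall (\cite{Wal}, Section~4) and Hefez (\cite{Hef}, Section~7.4), so there is no ``paper's proof'' to compare against directly.

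That said, your argument is entirely consonant with the paper's later use of these facts. In particular, the combinatorial bookkeeping you carry out in Part~(2)---computing $\sum_{i=1}^{m-1}\mathrm{ord}_t(\phi(t)-\phi(\zeta_m^i t))$ by partitioning the indices $i$ according to which $m/e_{j-1}$ divides $i$ but $m/e_j$ does not---is exactly the argument the paper itself deploys in the proof of the proposition on $\mathrm{Irr}(\mathrm{End}_{K(\!(x)\!)}(E_{f,q}))$ at the end of Section~3.2 (where it is attributed to Proposition~4.13 in \cite{Wal}). Your reduction via $\mu=I(f,f_y)-m+1$ is precisely Lemma~\ref{invariants2}, which the paper likewise cites rather than proves. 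So your route through Part~(2) is the standard one and matches the paper's toolkit.

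For Part~(1), your normalization/snake-lemma argument is the classical proof (this is how Wall and Hefez do it), and your observation that $\dim_K\mathrm{Ker}(\bar g|_C)=\dim_K\mathrm{coker}(\bar g|_C)$ on the finite-dimensional quotient $C=\tilde R/R$ is the right way to close the computation. One small point worth making explicit: the equivalence you use between ``$(m/d)\mid e_{j-1}$'' (with $d=\gcd(i,m)$) and ``$(m/e_{j-1})\mid i$'' is immediate but should be stated, since it is what turns your divisibility condition on $m/d$ into a count of indices~$i$.
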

\begin{lem}[see COROLLARY 7.16 and THEOREM 7.18 in \cite{Hef} for example]
	\label{invariants2}
	Let $f(x,y)$ be an irreducible curve germ with $n=I(f,x)$ 
	and $m=I(f,y)$. Then  we have
	\begin{align*}
		\mu&=I\left(f,\frac{\partial f}{\partial x}\right)-m+1,&
		\mu&=I\left(f,\frac{\partial f}{\partial y}\right)-n+1.
	\end{align*}

\end{lem}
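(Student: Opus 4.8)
The plan is to prove the second identity $\mu=I(f,\partial f/\partial y)-n+1$ directly from a parametrization, and then to obtain the first from the chain rule along the curve. The smooth case is immediate (one partial derivative is a unit), so I assume $f$ is singular, whence $\partial f/\partial x$ and $\partial f/\partial y$ are both coprime to $f$ and Lemma \ref{invariants}(1) applies to them. The substitution $\tilde f(x,y)=f(y,x)$ preserves $\mu$ and interchanges the two identities, while sending a germ with $I(f,x)>I(f,y)$ to a normalized one; so I may assume $n=I(f,x)\le I(f,y)=m$. Fix the good parametrization $x=t^{n},\ y=\phi(t)$ with $\mathrm{ord}_t(\phi)=m$ and Puiseux characteristic $(n;\beta_1,\dots,\beta_g)$, so $e_0=n$ and the normalization $m\ge n$ lets me invoke Lemma \ref{invariants}(2). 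By Weierstrass preparation I replace $f$ by its monic model $W=\prod_{i=0}^{n-1}(y-\phi(\zeta^{i}x^{1/n}))$, where $\zeta$ is a primitive $n$-th root of unity: $f$ and $W$ generate the same ideal, and writing $f=uW$ gives $\partial f/\partial y=u_y W+uW_y$, whose restriction to the curve equals $uW_y$ because $W$ vanishes there; as $u$ is a unit this leaves the $t$-order unchanged.

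Differentiating $W$ and substituting $x^{1/n}=t,\ y=\phi(t)$, every summand of $\partial W/\partial y$ except the one omitting the factor $i=0$ still contains that vanishing factor, so that
\[
	\frac{\partial f}{\partial y}(t^{n},\phi(t))=\prod_{i=1}^{n-1}\bigl(\phi(t)-\phi(\zeta^{i}t)\bigr).
\]
By Lemma \ref{invariants}(1) the intersection number is the order of this product, hence
\[
	I\!\left(f,\frac{\partial f}{\partial y}\right)=\sum_{i=1}^{n-1}\mathrm{ord}_t\bigl(\phi(t)-\phi(\zeta^{i}t)\bigr).
\]
Writing $\phi(t)=\sum_k c_k t^{k}$, the $i$-th summand is the least $k$ with $c_k\neq 0$ and $\zeta^{ik}\neq 1$, i.e. with $(n/\gcd(i,n))\nmid k$.

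The heart of the matter, and the step I expect to be the main obstacle, is the arithmetic identity
\[
	\sum_{i=1}^{n-1}\mathrm{ord}_t\bigl(\phi(t)-\phi(\zeta^{i}t)\bigr)=\sum_{j=1}^{g}(e_{j-1}-e_{j})\beta_{j}.
\]
To prove it I fix $i$, put $d=\gcd(i,n)$ and $M=n/d$, and use the defining property of the characteristic exponents: every $k<\beta_{j}$ with $c_k\neq 0$ is divisible by $e_{j-1}$, while $e_j=\gcd(e_{j-1},\beta_j)$. Since $M\mid n=e_0$, the least $k$ with $c_k\neq 0$ and $M\nmid k$ is exactly $\beta_j$ for the smallest $j$ with $M\nmid e_j$, equivalently $M\mid e_{j-1}$ but $M\nmid e_j$. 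Using $M\mid e_\ell\iff (n/e_\ell)\mid d$, together with the fact that the number of $i\in\{1,\dots,n\}$ with $(n/e_\ell)\mid\gcd(i,n)$ equals $e_\ell$, the number of indices $i$ producing the value $\beta_j$ is $e_{j-1}-e_j$; the excluded index $i=n$ (where $M=1$) lies in none of these sets, so restricting to $i\le n-1$ is harmless. Summing over $j$ gives the identity. Combining it with Lemma \ref{invariants}(2), namely $\mu=\sum_j(e_{j-1}-e_j)(\beta_j-1)$, and with $\sum_j(e_{j-1}-e_j)=e_0-e_g=n-1$, yields $I(f,\partial f/\partial y)=\mu+n-1$, which is the second identity.

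For the first identity I differentiate the relation $f(t^{n},\phi(t))\equiv 0$ with respect to $t$, obtaining
\[
	\frac{\partial f}{\partial x}(t^{n},\phi(t))\cdot n t^{n-1}+\frac{\partial f}{\partial y}(t^{n},\phi(t))\cdot\phi'(t)=0.
\]
The two terms are negatives of one another, so they have equal $t$-order; using $\mathrm{ord}_t(nt^{n-1})=n-1=I(f,x)-1$ and, in characteristic zero, $\mathrm{ord}_t(\phi')=m-1=I(f,y)-1$, this gives $I(f,\partial f/\partial x)-I(f,\partial f/\partial y)=m-n$. Together with the second identity it yields $I(f,\partial f/\partial x)=\mu+m-1$, that is $\mu=I(f,\partial f/\partial x)-m+1$. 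Finally, a germ with $I(f,x)>I(f,y)$ is handled by applying both established identities to the normalized germ $\tilde f(x,y)=f(y,x)$ and translating back along the dictionary $\mu(\tilde f)=\mu$, $I(\tilde f,x)=I(f,y)$, $I(\tilde f,\partial\tilde f/\partial x)=I(f,\partial f/\partial y)$.
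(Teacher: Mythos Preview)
Your argument is correct. The paper itself gives no proof of this lemma---it is simply quoted from Hefez---so there is nothing to compare directly. What you have written is a clean self-contained derivation from Lemma~\ref{invariants}, and in fact your central counting argument, that
\[
\sum_{i=1}^{n-1}\mathrm{ord}_t\bigl(\phi(t)-\phi(\zeta^{i}t)\bigr)=\sum_{j=1}^{g}(e_{j-1}-e_{j})\beta_{j},
\]
is exactly the computation the paper carries out later (in the proposition relating $\mathrm{Irr}(\mathrm{End}(E_{f,q}))$ to the dual Puiseux characteristic). So your route is very much in the spirit of the paper, just applied one step earlier.

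One small point of presentation: the displayed equality
\[
\frac{\partial f}{\partial y}(t^{n},\phi(t))=\prod_{i=1}^{n-1}\bigl(\phi(t)-\phi(\zeta^{i}t)\bigr)
\]
is literally true only for $W$, not for $f=uW$; what you actually established (and what you need) is equality \emph{up to a unit in $t$}. It would be cleaner to write $\partial W/\partial y$ there, or to insert the unit $u(t^{n},\phi(t))$ on the right. This does not affect the order computation or the conclusion.
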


\section{Formal meromorphic connections on a disk}
In this section we recall basic definitions and facts of 
formal meromorphic connections on a disk.
\begin{df}\normalfont
	Let $V$ be a finite dimensional vector space over 
	$K(\!(x)\!)$.  A {\em connection}
	on $V$ is a $K$-linear
	map $\nabla\colon V\rightarrow V$ satisfying the Leibniz rule
	\[
		\nabla(fv)=f\nabla(v)+\frac{df}{dx}\nabla(v)
	\]
	for $f\in K(\!(x)\!)$ and $v\in V$. 
	We call the pair $(V,\nabla)$ the $K(\!(x)\!)$-connection 
	shortly.
	Sometimes we write $(V,\nabla_{x})$ to emphasize 
	the variable $x$.
\end{df}
The {\em rank} of $(V,\nabla)$ is the dimension of $V$ as the
$K(\!(x)\!)$-vector space. 
We say that $(V,\nabla)$ is {\em irreducible} if $V$ has no proper nontrivial
$K(\!(x)\!)$-subspace $W$ such that $\nabla(W)\subset W$.
Morphisms between connections $(V_{1},\nabla_{1})$ and $(V_{2},\nabla_{2})$
are $K( \!(x)\!)$-linear maps $\phi\colon V_{1}\rightarrow V_{2}$ 
satisfying $\phi\nabla_{1}=\nabla_{2}\phi$.

\subsection{Indecomposable decompositions of connections}
Let us give a quick review of 
indecomposable decompositions of connections based on
the works of Hukuhara, Turrittin, Levelt, Balser-Jurkat-Lutz, Babbitt-
Varadarajan and so on, \cite{Huk,Tur,Lev,BJL,BabVar}.
We adopt the descriptions in 
\cite{Gra, Sab}.

For a positive integer $q$ and $f\in K(\!(x^{-\frac{1}{q}})\!)$, 
let us define 
$E_{f,q}=(V,\nabla)$, a connection over $K( \!(x)\!)$, as follows. 
Regard 
$V=K(\!(x^{\frac{1}{q}})\!)$ as a $K(\!(x)\!)$-vector space
and define $\nabla(v)=(\frac{d}{dx}+x^{-1}f)v$ for $v\in V$.
The irreducibility and isomorphic classe of $E_{f,q}$ are determined as follows
(see the section 3 in \cite{Sab} for example).
If $E_{f,q}$ and $E_{g,q}$ are isomorphic, then
			there exists an integer $0\le r\le q-1$ such
			that 
			\[
				f(x^{\frac{1}{q}})-
				g(\zeta_{q}^{r}x^{\frac{1}{q}})
				\in 
				R_{q}(x)=
				K( \!(x^{\frac{1}{q}})\!)/
				\left(x^{\frac{1}{q}}K[ \![x^{\frac{1}{q}}]\!]
					+\frac{1}{q}\mathbb{Z}
				\right)
			\]
Also the converse is true. 
Let us define $R_{q}^{o}(x)$ as the set of $f\in R_{q}(x)$ that cannot 
be represented by elements of $K(\!(x^{\frac{1}{r}})\!)$ for any
$0<r<q$. Then 
the connection $E_{f,q}$ is irreducible if and only if the image of 
$f$ in $R_{q}^{o}$.
\begin{prop}[Hukuhara-Turrittin-Levelt decomposition]\label{HTL}
	Every $(V,\nabla)$ decomposes as 
	\[
		(V,\nabla)\cong \bigoplus_{i}(E_{f_{i},q_{i}}\otimes J_{m_{i}})
	\]
	where $f_{i}\in R_{q_{i}}^{o}(x)$ and $J_{m}=(
	\mathbb{C}(\!(x)\!)^{\oplus m},\frac{d}{dx}+x^{-1}N_{m})$ with
	the nilpotent Jordan block $N_{m}$ of size $m$. 
\end{prop}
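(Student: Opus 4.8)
The plan is to first establish the decomposition after a ramified base change, where the connection becomes unramified, and then to descend along the Galois action to reassemble the ramified factors $E_{f_i,q_i}$. Since $K$ is algebraically closed of characteristic zero, Puiseux's theorem gives $\overline{K(\!(x)\!)}=\bigcup_{q\ge 1}K(\!(x^{1/q})\!)$, so ramified covers of the form $x\mapsto x^{1/q}$ exhaust the algebraic closure. First I would fix $(V,\nabla)$ of rank $n$ and choose a cyclic vector (the cyclic vector lemma applies since we are over a differential field of characteristic zero), presenting the module by a single differential operator $P$ of order $n$. The Newton polygon of $P$ has finitely many rational slopes, and after the base change $x\mapsto x^{1/q_0}$, where $q_0$ is the common denominator of these slopes, all slopes become integral. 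Write $\tilde L=K(\!(x^{1/q_0})\!)$ and $\tilde V=V\otimes_{K(\!(x)\!)}\tilde L$.

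Over $\tilde L$ the core claim is that $\tilde V$ splits as a direct sum of pieces $L_\alpha\otimes J_{m_\alpha}$ with each $L_\alpha=(\tilde L,\tfrac{d}{dx}+x^{-1}g_\alpha)$ of rank one. I would prove this by induction on the largest slope $s$. If $s>0$, the leading exponential behavior is governed by the characteristic polynomial of the leading matrix; because $K$ is algebraically closed its roots lie in $K$, and distinct roots yield a formal splitting of $\tilde V$ via a Hensel-type idempotent-lifting argument (equivalently, a factorization of $P$ according to its distinct exponential factors of top slope). On each isotypic block one twists by the rank-one connection $(\tilde L,\tfrac{d}{dx}-x^{-1}g_\alpha)$ to strictly lower the top slope, so that induction applies. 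When $s=0$ the connection is regular singular, hence isomorphic to $(\tilde L^{\,n},\tfrac{d}{dx}+x^{-1}A)$ with $A\in M_n(K)$ constant; the Jordan decomposition of $A$ produces exactly the summands of the required shape, the semisimple eigenvalue $c\in K$ contributing a rank-one exponential factor and the nilpotent part contributing $N_{m}$.

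Finally I would descend. The extension $\tilde L/K(\!(x)\!)$ is Galois with group $\mathbb{Z}/q_0$ generated by $x^{1/q_0}\mapsto\zeta_{q_0}x^{1/q_0}$, and this action permutes the rank-one summands $L_\alpha$ by $g\mapsto g(\zeta_{q_0}\,\cdot\,)$. Grouping the $L_\alpha\otimes J_{m_\alpha}$ into Galois orbits and equipping the sum of an orbit with its descent datum, each orbit of size $q'\mid q_0$ with representative $f\in R_{q'}^{o}(x)$ reassembles into one summand $E_{f,q'}\otimes J_{m}$ over $K(\!(x)\!)$; here the residue eigenvalue of the regular part is read modulo $\tfrac{1}{q_0}\mathbb{Z}$, which is precisely the quotient defining $R_{q'}(x)$. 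The orbit having full size $q'$ is exactly the irreducibility condition recorded before the proposition, so the resulting $f_i$ indeed lie in $R_{q_i}^{o}(x)$, and this yields the asserted decomposition.

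The main obstacle is the splitting in the unramified case, namely separating summands with distinct leading exponentials of the same slope: this requires the idempotent-lifting (Hensel) step together with a choice of the uniform ramification index $q_0$ that makes every slope integral simultaneously. Verifying that the induction on slope terminates, and that twisting by the rank-one exponential genuinely decreases the irregularity at each stage, is the technical heart of the argument; by contrast, once the Galois action on the exponential factors is identified, the descent step is essentially formal.
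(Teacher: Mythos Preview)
The paper does not prove this proposition; it is stated as the classical Hukuhara--Turrittin--Levelt theorem with references to \cite{Huk,Tur,Lev,BJL,BabVar}, so there is no ``paper's own proof'' to compare against. Your outline is essentially the standard argument one finds in those references (cyclic vector, Newton polygon, splitting by leading exponential, twist and induct on slope, Jordan form in the regular case, Galois descent), and it is correct in spirit.

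One technical point worth tightening: choosing $q_0$ once as the common denominator of the \emph{initial} Newton polygon slopes is generally not sufficient. After you split off the top-slope piece and twist, the residual block acquires a new Newton polygon whose slopes need not have denominators dividing your original $q_0$; the classical proofs either ramify again at each inductive step or take $q_0$ large enough a priori (e.g.\ a divisor of $n!$). Similarly, in the regular-singular endgame you should mention the shearing needed when eigenvalues of $A$ differ by nonzero integers, since otherwise the constant-$A$ normal form is not immediate. With those two caveats addressed, your sketch is a faithful reconstruction of the classical proof the paper is citing.
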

\subsection{Local Fourier transforms}
The local Fourier transform is 
introduced by Laumon, Bloch-Esnault and Garc\'ia L\'opez \cite{Lau,BloEsn,Gar}
to analyze formal local structures of the Fourier transform of meromorphic 
connections on $\mathbb{P}^{1}$. 
In this paper, we consider the local Fourier transform only for $E_{f,q}$
following Proposition 3.7, 3.9 and 3.12 in \cite{BloEsn}
and refer to original papers for general definitions and properties.
\begin{df}\normalfont 
	Let $z,\hat{z}$ be indeterminates and set $\zeta=\frac{1}{z}, 
	\hat{\zeta}=\frac{1}{\hat{z}}.$
	\begin{enumerate}
		\item Let $f\in R_{q}^{o}(z)$ and 
			$f\neq 0$. Set $E_{f,q}=(V,\nabla_{z})$.
			The connection $\mathcal{F}^{(0,\infty)}(E_{f,q})
			=(V,\hat{\nabla}_{\hat{\zeta}})$ over
			$K(\!(\hat{\zeta})\!)$ is defined by 
			$K$-linear operators on $V$,
			\[
			\hat{\zeta}=-\nabla_{z}^{-1}\colon
				V\rightarrow V,\quad
				\hat{\nabla}_{\hat{\zeta}}=
				-\hat{\zeta}^{-2}z\colon V
				\rightarrow V.
			\]
		\item Let $f\in R_{q}^{o}(\zeta)$, $\ord(f)=-p/q$, 
			$f\neq 0$. 
			Set $E_{f,q}=(V,\nabla_{\zeta})$ and 
			suppose that $p<q$. 
			Then the connection 
			$\mathcal{F}^{(\infty,0)}(E_{f,g})
			=(V,\hat{\nabla}_{\hat{z}})$ over
			$K(\!(\hat{z})\!)$ is obtained by 
			$K$-linear operators on $V$,
			\[
				\hat{z}=\zeta^{2}\nabla_{\zeta}\colon
				V\rightarrow V,\quad
				\hat{\nabla}_{\hat{z}}=
				-\zeta^{-1}\colon V
				\rightarrow V.
			\]
		\item Let $f\in R_{q}^{o}(\zeta)$, $\ord(f)=-p/q$, 
			$f\neq 0$.
			Set $E_{f,q}=(V,\nabla_{\zeta})$ and 
			suppose that $p>q$. 
			Then the connection 
			$\mathcal{F}^{(\infty,\infty)}(E_{f,g})
			=(V,\hat{\nabla}_{\hat{\zeta}})$ over
			$K(\!(\hat{\zeta})\!)$ is obtained by 
			$K$-linear operators on $V$,
			\[
				\hat{z}=\zeta^{2}\nabla_{\zeta}\colon
				V\rightarrow V,\quad
				\hat{\zeta}^{2}\hat{\nabla}_{\hat{\zeta}}=
				-\zeta^{-1}\colon V
				\rightarrow V.
			\]
	\end{enumerate}
\end{df}

The following theorem for the explicit structures of 
local Fourier transforms $\mathcal{F}^{(*,*)}(E_{f,q})$ is due to Fang and 
Sabbah. We adopt the formulation given by Graham-Squire in \cite{Gra} who gave a 
simple proof of the theorem.
\begin{thm}[J.~Fang \cite{Fan} and C.~Sabbah \cite{Sab}]\label{locfourier}
	\begin{enumerate}
		\item Let $f\in R_{q}^{o}(z)$, 
			$\ord(f)=-p/q$ and $f\neq 0$. 
			Then
			\[
				\mathcal{F}^{(0,\infty)}(E_{f,q})\cong
				E_{g,p+q},
			\]
			where $g\in R_{p+q}^{o}(\hat{\zeta})$ is determined by
			\begin{align*}
				f&=-z\hat{z},& 
				g&=f+\frac{p}{2(p+q)}.
			\end{align*}
		\item Let $f\in R_{q}^{o}(\zeta)$, 
			$\ord(f)=-p/q$ and $f\neq 0$. Suppose that 
			$p<q$.
			Then
			\[
				\mathcal{F}^{(\infty,0)}(E_{f,q})\cong
				E_{g,q-p},
			\]
			where $g\in R_{q-p}^{o}(\hat{z})$ is determined by 
			\begin{align*}
				f&=z\hat{z},& 
				g&=-f+\frac{p}{2(q-p)}.
			\end{align*}
 		\item Let $f\in R_{q}^{o}(\zeta)$, 
			$\ord(f)=-p/q$ and $f\neq 0$. Suppose that 
			$p>q$.
			Then
			\[
				\mathcal{F}^{(\infty,\infty)}(E_{f,q})\cong
				E_{g,p-q},
			\]
			where $g\in R_{p-q}^{o}(\hat{\zeta})$ is determined by
			\begin{align*}
				f&=z\hat{z},& 
				g&=-f+\frac{p}{2(p-q)}.
			\end{align*}
	\end{enumerate}
\end{thm}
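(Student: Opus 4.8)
The plan is to prove part (1) in full and to indicate the sign changes that give (2) and (3), since the three cases are entirely parallel. First I would reduce the statement to the comparison of two invariants. By the classification recalled before Proposition \ref{HTL}, every irreducible $K(\!(\hat\zeta)\!)$-connection is of the form $E_{h,q'}$ for a unique ramification $q'$ and a unique class $h\in R^{o}_{q'}(\hat\zeta)$, and two such are isomorphic precisely when the ramifications and the $R^{o}$-classes agree. Since the local Fourier transform preserves irreducibility (Bloch--Esnault \cite{BloEsn}, Laumon \cite{Lau}), the transform $\mathcal{F}^{(0,\infty)}(E_{f,q})=(V,\hat\nabla_{\hat\zeta})$ is again some $E_{h,q'}$, so it suffices to show $q'=p+q$ and that $h$ represents the same class as $g=f+\frac{p}{2(p+q)}$.

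To identify the ramification I would work on $V=K(\!(u)\!)$ with $u=z^{1/q}$, rewriting $\nabla_z=\partial_z+z^{-1}f$ by means of $\partial_z=\frac{1}{q}u^{1-q}\partial_u$. Writing $f=a_{-p}u^{-p}+\cdots$ with $a_{-p}\neq0$, the operator $\hat z=-\nabla_z$ acts on $V$ as multiplication by $-a_{-p}u^{-(p+q)}$ modulo terms of strictly higher order in $u$; hence $\hat\zeta=\hat z^{-1}=-\nabla_z^{-1}$ equals a unit times $u^{p+q}$ to leading order. A Weierstrass-type valuation argument then shows that $1,u,\dots,u^{p+q-1}$ span $V$ over $K(\!(\hat\zeta)\!)$ and produces a uniformizer $w=u(1+O(u))$ with $\hat\zeta=w^{p+q}$ up to a constant, so that $V=K(\!(\hat\zeta^{1/(p+q)})\!)$ and $q'=p+q$; this simultaneously computes the rank, so no external rank formula is needed.

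For the exponential class I would use the stationary-phase (Legendre transform) mechanism underlying Laumon's description, in the algebraic form of Graham-Squire \cite{Gra}. Let $F$ be the exponential factor of $E_{f,q}$, i.e. $dF=z^{-1}f\,dz$. In the $\hat z$-coordinate the transform is governed by $\hat z=-\nabla_z$ and $\hat\nabla_{\hat z}=z$ (so that $\hat\nabla_{\hat\zeta}=-\hat\zeta^{-2}z$ as in the definition), and these satisfy $[\hat\nabla_{\hat z},\hat z]=1$, so $\hat\nabla_{\hat z}$ plays the role of $\partial_{\hat z}$. Matching horizontal sections then forces the transformed exponential factor to be the Legendre transform $\hat F=F+z\hat z$ evaluated at the critical point $F'(z)+\hat z=0$, which is exactly the relation $z\hat z=-f$. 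A direct differentiation gives $\hat\zeta\,\frac{d\hat F}{d\hat\zeta}=-z\hat z=f$, so the irregular part of $h$ coincides with $f$ re-expressed through $z\hat z=-f$; this is precisely the substitution $f=-z\hat z$ in the statement.

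The remaining, and genuinely delicate, point is the constant term $\frac{p}{2(p+q)}$, which records the formal monodromy rather than the irregular part and is invisible to the leading Legendre computation. I would extract it from the subleading half-density factor $(F''(z_c))^{-1/2}$ of stationary phase: a short computation gives $z_c\sim C\hat z^{-q/(p+q)}$, hence this factor has order $1-\frac{2q+p}{2(p+q)}=\frac{p}{2(p+q)}$ in $\hat z$, which after the standard normalization by the measure is exactly the shift of the coefficient of $\hat\zeta^{0}$. I expect this constant, together with the bookkeeping of the $R^{o}_{p+q}$-ambiguities (modulo $\hat\zeta^{1/(p+q)}K[\![\hat\zeta^{1/(p+q)}]\!]$, modulo $\frac{1}{p+q}\Z$, and up to the Galois action of $\zeta_{p+q}$), to be the main obstacle. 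Finally, parts (2) and (3) run identically with $\hat z=\zeta^2\nabla_\zeta$: the relation becomes $z\hat z=f$ with the opposite sign, the dominant term of $\hat z$ has order $q-p$ (resp. $-(p-q)$) in $u$ so the new ramification is $q-p$ (resp. $p-q$), and the same half-density analysis yields the constants $\frac{p}{2(q-p)}$ and $\frac{p}{2(p-q)}$.
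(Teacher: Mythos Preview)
The paper does not supply its own proof of this theorem: it is quoted from Fang \cite{Fan} and Sabbah \cite{Sab}, and the author explicitly adopts the formulation of Graham-Squire \cite{Gra}, who gave a ``simple proof.'' There is therefore no in-paper argument to compare against.

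That said, your proposal is essentially a sketch of the Graham-Squire stationary-phase method that the paper cites: you identify the new ramification $p+q$ from the leading order of $\hat\zeta=-\nabla_z^{-1}$ in the uniformizer $u=z^{1/q}$, recover the irregular part of $g$ from the Legendre/critical-point relation $z\hat z=-f$, and extract the constant shift $\frac{p}{2(p+q)}$ from the subleading half-density factor. This is the correct mechanism and matches the referenced proof; the parallel treatment of (2) and (3) via the sign change $z\hat z=f$ is also correct. The only places to be careful are exactly the ones you flag yourself: the bookkeeping of the $R^{o}_{p+q}$-class (Galois ambiguity and the $\frac{1}{p+q}\Z$ quotient) and the normalization that pins down the constant, but these are handled in \cite{Gra} and your outline is consistent with that argument.
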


By the above theorem, 
we can define the inversion of local Fourier transforms as follows.
Let $g\in R_{q}^{o}(\hat{\zeta})$, $\ord(g)=-p/q$ and $g\neq 0$. 
Suppose that $p<q$. 
Then we define  $\left(\mathcal{F}^{(0,\infty)}\right)^{-1}
(E_{g,q})$ as  
$E_{f,q-p}$  where $f\in R_{q-p}^{o}(z)$ is determined by  
\begin{align*}
	g-\frac{p}{2q}&=-z\hat{z},& 
	g&=f+\frac{p}{2q}.
\end{align*}
Then we have 
\begin{align*}
\mathcal{F}^{(0,\infty)}\left(\left( \mathcal{F}^{(0,\infty)}
\right)^{-1}(E_{g,q})\right)&\cong E_{g,q},& 
\left(\mathcal{F}^{(0,\infty)}\right)^{-1}\left(\mathcal{F}^{(0,\infty)}
(E_{f,q})\right)&\cong E_{f,q}.
\end{align*}
Similarly we can define $(\mathcal{F}^{(\infty,0)})^{-1}(E_{g,q})$. 
If $p>q$, then $(\mathcal{F}^{(\infty,\infty)})^{-1}(E_{g,q})$ is defined
as well.

\subsection{Irregularity}
For a $K(\!(x)\!)$-connection $(V,\nabla)$, let us fix a basis
and identify $V\cong K(\!(x)\!)^{\oplus n}$. Then we can write
$\nabla=\frac{d}{dx}+A(x)$ with $A(x)\in M(n,K(\!(x)\!))$.
Moreover we can choose a suitable basis 
so that $A(x)\in M(n,K[x^{-1}])$,
see \cite{BJL} for example.  We call $A(x)\in M(n,K[x^{-1}])$ 
the {\em normalized matrix} of $(V,\nabla)$.

Let us take $K$ as the field of complex numbers $\mathbb{C}$
and $\mathbb{C}(\!\{x\}\!)$ denote the field of meromorphic functions 
near $0$. 
The irregularity defined below measures the difference 
between formal and convergent solutions of $\frac{d}{dx}+A(x)$.
\begin{df}[see H.~Komatsu \cite{Kom} and B.~Malgrange \cite{Mal}]\normalfont
	Let $(V,\nabla)$ be a $\mathbb{C}(\!(x)\!)$-connection. 
	The {\em irregularity} of 
	$(V,\nabla)$ is 
	\[
		\mathrm{Irr}(V,\nabla)=\chi\left(\frac{d}{dx}+A(x),\,
		\mathbb{C}(\!(x)\!)^{\oplus\, n}\right)-
		\chi\left(\frac{d}{dx}+A(x),\,
		\mathbb{C}(\!\{x\}\!)^{\oplus\,n}\right).
	\]
	Here $\chi(\Phi,V)$ is the {\em index} of the $\mathbb{C}$-linear map
	$\Phi\colon V\rightarrow V$, i.e.,
	\[
		\chi(\Phi,V)=\dim_{\mathbb{C}}\mathrm{Ker\,}\Phi-
		\dim_{\mathbb{C}}\mathrm{Coker}\Phi.
	\]
\end{df}
It is known that $\mathrm{Irr}(V,\nabla)$ is independent from choices of 
normalized matrices $A(x)$. Moreover if we decompose
	\[
		(V,\nabla)\cong \bigoplus_{i}(E_{f_{i},q_{i}}\otimes J_{m_{i}})
	\]
	as Proposition \ref{HTL}, the irregularity can be written by 
	\[
		\mathrm{Irr}( (V,\nabla))=-\sum_{i}ord(f_{i}).
	\]
	Thus not only for $\mathbb{C}$ but also general $K$, 
	we can define the irregularity by the above formula.

\section{Formal meromorphic connections and associated curves}
In this section we shall define curve germs associated to irreducible 
formal meromorphic connections. Then intersection numbers and Milnor
numbers of these curves will be written by the irregularities of the 
connections. Next we shall see the relationship between
the local Fourier transforms of connections and the blowing up of the curve
germs.
Finally we shall determine a necessary and sufficient condition 
for an irreducible formal 
connection to have a resolution of ramified irregular 
singularities via local Fourier
transforms.

\subsection{Associated curves}
Let us take $f\in K(\!(x^{\frac{1}{q}})\!)$ so that the image is in 
$R_{q}^{o}\backslash\{0\}$.
Then the curve germ associated to the irreducible $E_{f,q}$ is defined as follows.
\begin{df}\normalfont
	The {\em associated curve germ} of an irreducible 
	$K(\!(x)\!)$-connection $E_{f,q}$ is 
	\[
		C_{f,q}(x,y)=\prod_{i=1}^{q}\left(y-
		\frac{1}{f_{i}(x^{\frac{1}{q}})}\right),
	\]
	where $f_{i}(x^{\frac{1}{q}})=
	f(\zeta_{q}^{i}
	x^{\frac{1}{q}})$.
\end{df}

To the above $E_{f,q}$, 
we associate a sequence of integers as an analogy of the Puiseux
characteristic of curves. 
Let us write $f(x^{\frac{1}{q}})=
a_{n}x^{\frac{n}{q}}+a_{n+1}x^{\frac{n+1}{q}}+\cdots$.
Define 
\[
	-\beta_{1}=\mathrm{min}\{i\mid a_{i}\neq 0,\, q\not| i\},\quad
	e_{1}=\mathrm{gcd}(q,\beta_{1}).
\]
Also define
\[
	-\beta_{k}=\mathrm{min}\{i\mid a_{i}\neq 0,\, e_{k-1}\not| i\},\quad
	e_{k}=\mathrm{gcd}(e_{k-1},\beta_{k}),
\]
inductively till we reach $g$ with $e_{g}=1$.
\begin{df}\normalfont
	Let $E_{f,q}$ be as above with $-p/q=\ord(f)$. 
	Then the sequence of the integers
	\[
		(q,p; \beta_{1},\ldots,\beta_{g})
	\]
	is called the {\em dual Puiseux characteristic} of $E_{f,q}$.
\end{df}
Let us compare the dual Puiseux characteristic of $E_{f,q}$ with the Puiseux
characteristic of $C_{f,q}$.
\begin{prop}\label{puiseux}
	Let $E_{f,q}$ and $C_{f,q}$ be as above and 
	\[
	(q,p;\beta_{1},\ldots,\beta_{g})
	\]
	the dual Puiseux characteristic of $E_{f,q}$.
	Then the Puiseux characteristic of $C_{f,q}$ is 
	\[
		(q;2p-\beta_{1},\ldots,2p-\beta_{g}),\quad\text{if }p\ge q,
	\]
	\[
		(p;p+q-\beta_{1},\ldots,p+q-\beta_{g}),\quad\text{if }
		p<q.
	\]
\end{prop}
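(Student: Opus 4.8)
The plan is to reduce the whole statement to a question about the power series $f$ and its reciprocal $\phi:=1/f$, and to extract the characteristic exponents from a single power-series inversion lemma. Writing $t=x^{1/q}$, the branch of $C_{f,q}$ through the origin is $x=t^{q},\ y=\phi(t)$ with $\phi\in K[\![t]\!]$ of order $p$ (since $\ord(f)=-p/q$). If $\phi\in K(\!(x^{1/r})\!)$ for some $r<q$ then so is $f=1/\phi$, contradicting $f\in R_{q}^{o}$; hence $q$ is the least such $r$ and $x=t^{q},\,y=\phi(t)$ is a good parametrization. By Lemma \ref{invariants}(1) its two intersection numbers are $I(C_{f,q},x)=\ord_{t}(t^{q})=q$ and $I(C_{f,q},y)=\ord_{t}(\phi)=p$, so the base of the Puiseux characteristic is $\min(p,q)$, matching the two cases.

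The engine is the following. Write $f=a_{-p}t^{-p}g$ and $\phi=a_{-p}^{-1}t^{p}h$ with $g,h\in K[\![t]\!]$, $g(0)=h(0)=1$ and $gh=1$. For a positive integer $d$ let $\nu_{d}(g)$ denote the least positive exponent occurring in $g$ that is not divisible by $d$. I claim $\nu_{d}(g)=\nu_{d}(h)$. To see this I would split $g=g_{0}+g_{1}$ and $h=h_{0}+h_{1}$ into the parts supported on exponents in $d\Z$ and on exponents not in $d\Z$; comparing in $gh=1$ the terms whose exponent lies outside $d\Z$ gives $g_{0}h_{1}+g_{1}h_{0}+(g_{1}h_{1})_{\notin d\Z}=0$, and since $g_{0},h_{0}$ start with $1$ and the cross term $g_{1}h_{1}$ has order $\ge\nu_{d}(g)+\nu_{d}(h)$, reading off the lowest order forces the first escapes of $g$ and of $h$ to occur at the same exponent (with opposite leading coefficients).

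In the case $p\ge q$ the base is $q$, and the characteristic exponents are the exponents of $\phi$ not lying in the successive lattices $e_{k-1}\Z$. I would induct on $k$ with $e_{0}=q$: assuming the curve's $(k-1)$-st lattice equals $e_{k-1}$, apply the lemma with $d=e_{k-1}$. One checks $e_{k-1}\mid p$ for $k\ge2$ (and $e_{0}=q\mid p$ precisely when $q\mid p$), so the escape of $f$ from $e_{k-1}\Z$ sits at $-\beta_{k}=-p+\nu_{d}(g)$ while that of $\phi$ sits at $p+\nu_{d}(h)=2p-\beta_{k}$; the remaining possibility $q\nmid p$ forces $\beta_{1}=p$ and is settled directly by the leading terms $a_{-p}t^{-p}$ and $a_{-p}^{-1}t^{p}$. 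The lattice updates correctly because $e_{k-1}\mid2p$ gives $\gcd(e_{k-1},2p-\beta_{k})=\gcd(e_{k-1},\beta_{k})=e_{k}$, yielding $(q;2p-\beta_{1},\ldots,2p-\beta_{g})$.

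The case $p<q$ is where I expect the real difficulty. Here $q\nmid p$, so $\beta_{1}=p$ and the base-$q$ escape sequence above begins with $2p-\beta_{1}=p<q$; thus $q$ is not an admissible base and, as recalled in Section 1, one must reparametrize with $y$ as the base variable, $y=u^{p}$, $x=u^{q}w(u)$ with $w$ a unit. The intended conclusion is that the characteristic exponents in $u$ are $p+q-\beta_{k}$, the lattice identity now reading $\gcd(e_{k-1},p+q-\beta_{k})=\gcd(e_{k-1},\beta_{k})$ via $e_{k-1}\mid(p+q)$. The hard part is that the change of uniformizer $t\leftrightarrow u$ is an order-one but nonlinear substitution which need not preserve the escape exponents, so one must show that the first escape of $w$ from each $e_{k-1}\Z$ is still governed by the data of $f$ — either by re-deriving the inversion lemma after the substitution or by invoking the classical swap formula for Puiseux characteristics \cite{Wal}. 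Particular care is required when $\gcd(p,q)>1$, since then the leading monomial $u^{q}$ no longer escapes $p\Z$ immediately and the bookkeeping of the first characteristic exponent must be handled separately.
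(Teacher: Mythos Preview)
Your argument for $p\ge q$ is correct and is a one-divisor-at-a-time version of the paper's proof. The paper instead packages all the escape data into a single semigroup: with $S=\{r\ge 0\mid r\ge p-\beta_{k}\text{ and }e_{k}\mid r\text{ for some }k\}$ and $S_{0}=\{p-\beta_{k}\mid k\ge 1\}$, one works in the class $\mathcal{O}_{S}^{*}$ of unit power series supported on $S$ with nonzero coefficient at every point of $S_{0}$. The single assertion $\alpha\in\mathcal{O}_{S}^{*}\Longleftrightarrow\alpha^{-1}\in\mathcal{O}_{S}^{*}$ (Lemma~\ref{wall}(1), $m=-1$) replaces your inductive chain of $\nu_{d}$ claims and gives the case $p\ge q$ in one line. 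The content is the same; the semigroup framing, borrowed from Wall, simply absorbs the side checks ($e_{k-1}\mid p$, the $q\nmid p$ subcase) into the definition of $S$.

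The case $p<q$ is a genuine gap in your proposal, and the paper's proof shows exactly which lemma you are missing. One must pass from $y=t^{p}\alpha(t)^{-1}$ to $y=u^{p}$, $x=u^{q}\delta(u)$ while controlling the exponent support of $\delta$. The paper does this in three mechanical moves, each preserving $\mathcal{O}_{S}^{*}$: write $t^{p}\alpha^{-1}=(t\beta)^{p}$ (Lemma~\ref{wall}(1), $m=p$), solve $u=t\beta(t)$ as $t=u\gamma(u)$ (Lemma~\ref{wall}(2), compositional inversion), and expand $x=(u\gamma)^{q}=u^{q}\delta$ (Lemma~\ref{wall}(1), $m=q$). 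Your $\nu_{d}$ lemma covers only the multiplicative-inverse step; to finish along your route you would need its analogues for $p$-th roots of units and, crucially, for compositional inversion of order-one substitutions. The latter statement---if $u=t\alpha(t)$ is inverted to $t=u\beta(u)$ then $\nu_{d}(\alpha)=\nu_{d}(\beta)$---is the missing ingredient; once it is in hand the ``particular care when $\gcd(p,q)>1$'' dissolves, since the semigroup formulation treats all divisibility cases uniformly and no separate swap formula is needed.
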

To prove this proposition, we need some preparations.
Let $S\subset \mathbb{Z}_{\ge 0}$ be an additive 
semigroup including $0$ and $S_{0}$ a subset of $S$ such that 
if $s=s'+s''$ with $s\in S_{0}$ and $s',s''\in S$ then either
$s'=0$ or $s''=0$. Write $\mathcal{O}_{S}$ for the set of power series
$\sum_{r}a_{r}t^{r}$ such that $a_{r}=0$ for all $r\notin S$ and 
$\mathcal{O}^{*}_{S}$ for the subset satisfying the further condition
$a_{r}\neq 0$ for all $r\in S_{0}$.
\begin{lem}[cf. Lemma 3.5.4 in \cite{Wal}]\label{wall}\quad\\
	\begin{enumerate}
		\item   Let $(t\alpha(t))^{m}=t^{m}\gamma(t)$ 
			with $m\in \mathbb{Z}$, 
			$\alpha(t),\gamma(t)\in
			K[\![t]\!]$, $\alpha(0)\neq 0$.
			Then $\alpha\in \mathcal{O}_{S}$ if and only if 
			$\gamma\in\mathcal{O}_{S}$, 
			and $\alpha\in\mathcal{O}^{*}_{S}$ 
			if and only if $\gamma\in \mathcal{O}^{*}_{S}$
		\item Let $\alpha\in K[\![t]\!]$ with 
			$\alpha(0)\neq 0$ and let $\beta\in 
			K[\![t]\!]$ be such that $t=u\beta(u)$
			solves $u=t\alpha(t)$. Then $\alpha\in \mathcal{O}_{S}$
			if and only if $\beta\in \mathcal{O}_{S}$  and 
			$\alpha\in\mathcal{O}^{*}_{S}$ if and only if 
			$\beta\in\mathcal{O}^{*}_{S}$.
		\end{enumerate}
\end{lem}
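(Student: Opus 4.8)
The plan is to realize $\mathcal{O}_S$ as a subring of $K[\![t]\!]$ and to read off coefficients explicitly, using the semigroup hypothesis on $S$ for the assertions about $\mathcal{O}_S$ and the indecomposability hypothesis on $S_0$ for those about $\mathcal{O}^*_S$ (I assume throughout $m\neq0$, as in a multiplicity). First I would record the algebraic structure. Since $S$ is an additive semigroup containing $0$, the coefficient of $t^r$ in a product $\phi\psi$ of elements of $\mathcal{O}_S$ is $\sum_{i+j=r}\phi_i\psi_j$, and a nonzero summand forces $i,j\in S$, hence $r=i+j\in S$; thus $\mathcal{O}_S$ is closed under multiplication. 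Writing a unit $\alpha\in\mathcal{O}_S$ as $\alpha(0)(1+\eta)$ with $\eta$ supported on $S$ and $\eta(0)=0$, the geometric series $\alpha^{-1}=\alpha(0)^{-1}\sum_{k\ge0}(-\eta)^k$ converges $t$-adically, each term lies in $\mathcal{O}_S$, and for fixed $r$ only finitely many $k$ contribute; hence $\mathcal{O}_S$ is closed under inversion of units, and so $\alpha^M\in\mathcal{O}_S$ for every $M\in\mathbb{Z}$ whenever $\alpha\in\mathcal{O}_S$.

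The technical heart I would isolate is a single coefficient formula: for $\alpha\in\mathcal{O}_S$ with $a_0=\alpha(0)\neq0$, any nonzero integer $M$, and any $s\in S_0$,
\[
	[t^s](\alpha^M)=M\,a_0^{M-1}\,a_s,
\]
where $[t^s]$ denotes the coefficient of $t^s$ and $a_s=[t^s]\alpha$. To prove it, for $M>0$ write $[t^s](\alpha^M)=\sum_{i_1+\cdots+i_M=s}a_{i_1}\cdots a_{i_M}$ (for $M<0$ first replace $\alpha$ by $\alpha^{-1}\in\mathcal{O}_S$ and $M$ by $-M$); a nonzero monomial requires each positive index to lie in $S$, but $s\in S_0$ admits no expression as a sum of two nonzero elements of $S$, so at most one index is positive, and the single-positive-index terms assemble to exactly $M a_0^{M-1}a_s$. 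Because $K$ has characteristic zero and $M\neq0$, the scalar $M a_0^{M-1}$ is nonzero, so $[t^s](\alpha^M)\neq0$ if and only if $a_s\neq0$. This is precisely where the hypotheses on $S_0$ and on the characteristic enter, and I expect it to be the main obstacle, since it is the one place the structure of $S_0$ must be used with care.

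Granting this, part (1) follows. As $\gamma=\alpha^m$, the implication $\alpha\in\mathcal{O}_S\Rightarrow\gamma\in\mathcal{O}_S$ is the ring closure above. For the converse I would argue by induction on $r$ that $\gamma\in\mathcal{O}_S$ forces $a_r=0$ whenever $r\notin S$: writing $[t^r](\alpha^m)=m a_0^{m-1}a_r+P_r$ with $P_r$ a polynomial in the $a_i$, $i<r$, the inductive hypothesis makes every nonzero monomial of $P_r$ supported on $S$, so $r\notin S$ kills $P_r$, and $[t^r]\gamma=0$ then forces $a_r=0$; this shows $\mathcal{O}_S$ is closed under $m$-th roots of units. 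The $\mathcal{O}^*_S$-equivalence is immediate from the displayed formula with $M=m$: for $s\in S_0$ one has $[t^s]\gamma=m a_0^{m-1}a_s$, nonzero exactly when $a_s\neq0$.

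For part (2) I would invoke Lagrange inversion. Solving $u=t\alpha(t)$ for $t$ as a series in $u$, the inverse $\beta(u)=t(u)/u$ satisfies
\[
	[u^k]\beta=\frac{1}{k+1}\,[t^k]\bigl(\alpha^{-(k+1)}\bigr).
\]
If $\alpha\in\mathcal{O}_S$ then $\alpha^{-(k+1)}\in\mathcal{O}_S$, so $[u^k]\beta=0$ for $k\notin S$ and $\beta\in\mathcal{O}_S$. If moreover $\alpha\in\mathcal{O}^*_S$, the coefficient formula with $M=-(s+1)$ gives $[t^s]\alpha^{-(s+1)}=-(s+1)a_0^{-s-2}a_s$ for $s\in S_0$, whence $[u^s]\beta=-a_0^{-s-2}a_s\neq0$ and $\beta\in\mathcal{O}^*_S$. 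Finally, the relation between $\alpha$ and $\beta$ is symmetric, since $u=t\alpha(t)$ solves $t=u\beta(u)$ just as $t=u\beta(u)$ solves $u=t\alpha(t)$, so the reverse implications follow by exchanging the roles of $\alpha$ and $\beta$. The only bookkeeping to check carefully here is the normalization in Lagrange's formula; the genuinely new content is again the indecomposability computation already established in the coefficient formula.
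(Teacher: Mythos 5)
Your proposal is correct, but it takes a genuinely different route from the paper. The paper treats the lemma as essentially Lemma 3.5.4 of Wall's book \cite{Wal}: it invokes Wall for positive exponents $m\in\mathbb{N}$ and for part (2), and only writes out the case $m=-1$ of (1), using the convolution recursion $\gamma_{k}+a_{1}\gamma_{k-1}+\cdots+a_{k}\gamma_{0}=0$ for the inverse series, induction on $k$, and the observation that $s\in S_{0}$ admits no nontrivial decomposition, so that $\gamma_{s}=na_{s}$ with $n$ a nonzero integer. You instead give a self-contained argument organized around ring-theoretic closure of $\mathcal{O}_{S}$ (products and inversion of units) together with one uniform coefficient identity $[t^{s}](\alpha^{M})=Ma_{0}^{M-1}a_{s}$ for $s\in S_{0}$ and all nonzero integers $M$, and you settle part (2) by Lagrange inversion, $[u^{k}]\beta=\frac{1}{k+1}[t^{k}]\alpha^{-(k+1)}$, rather than citing Wall. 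What your route buys is uniformity and transparency: a single formula drives both the $\mathcal{O}_{S}$ and $\mathcal{O}^{*}_{S}$ assertions for every integer exponent at once, and it isolates exactly where characteristic zero is used (nonvanishing of $Ma_{0}^{M-1}$ and division by $k+1$); the paper's route is shorter on the page only because the bulk of the work is delegated to \cite{Wal}.

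Two small, patchable points. First, for $M<0$ your reduction ``replace $\alpha$ by $\alpha^{-1}$ and $M$ by $-M$'' only yields $[t^{s}](\alpha^{M})=(-M)b_{0}^{-M-1}b_{s}$ with $b=\alpha^{-1}$, so you still need the case $M=-1$, namely $b_{s}=-a_{0}^{-2}a_{s}$; this follows by running your indecomposability argument on the convolution $\sum_{i+j=s}a_{i}b_{j}=0$, or you can avoid the case split entirely by expanding $\alpha^{M}=a_{0}^{M}\sum_{k\ge 0}\binom{M}{k}\eta^{k}$, where indecomposability of $s$ kills all terms with $k\ge 2$ for every integer $M$ simultaneously (this expansion also covers the converse induction in your part (1) when $m<0$, which as written tacitly assumes a multinomial expansion valid for positive $m$; alternatively pass from $\gamma$ to $\gamma^{-1}=\alpha^{-m}$ first). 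Second, $0\in S_{0}$ can actually occur in the paper's application (there $S_{0}=\{p-\beta_{q}\mid q\ge 1\}$ and $p=\beta_{1}$ is possible), in which case your displayed constants are off at $s=0$; but the nonvanishing conclusions hold trivially there since $a_{0}\neq 0$ and $\beta(0)=a_{0}^{-1}$. Neither point affects the correctness of the overall argument.
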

\begin{proof}	
	If we replace  the condition
	$m\in \mathbb{Z}$ in $(1)$ with $m\in \mathbb{N}$, 
	then this is nothing but Lemma 3.4.5 in Wall's book 
	\cite{Wal}. Thus we show only the case $m=-1$ in $(1)$.
	Although this follows from the same argument of the lemma in the 
	Wall's book, we give 
	a proof for the completeness of the paper.
	Write $\alpha(t)=\sum_{r=0}^{\infty}a_{r}t_{r}$ with $a_{0}\neq 0$
	and $\gamma(t)=\sum_{r=0}^{\infty}\gamma_{r}t^{r}$. 
	Then
	\[
		\sum_{r=0}^{\infty}\gamma_{r}t^{r}=
		\left(\sum_{r=0}^{\infty}a_{r}t_{r}\right)^{-1}.
	\]
	We may assume that $a_{0}=1$. Then
	\begin{align*}
		\gamma_{0}&=1,\\
		\gamma_{1}+a_{1}\gamma_{0}&=0,\\
		\cdots\\
		\gamma_{k}+a_{1}\gamma_{k-1}+\cdots+a_{k}\gamma_{0}&=0,\\
		\cdots.
	\end{align*}
	Thus $\gamma_{r}$ are  
	linear combinations of $a_{r_{1}}\cdots a_{r_{m}}$ with 
	$r=r_{1}+\cdots +r_{m}$. If $\gamma_{r}\neq 0$, then 
	there exist $r_{1},\ldots,r_{m}$ such that $r_{1}+\cdots +r_{m}=r$
	and $a_{r_{1}}\cdots a_{r_{m}}\neq 0$. Equivalently $r_{1},\ldots,
	r_{m}\in S$. Since $S$ is a semigroup, this means $r\in S$.
	Conversely, suppose $\gamma\in \mathcal{O}_{S}$ and that for each
	$r<k$ with $r\notin S$ we have $a_{r}=0$. If $k\notin S$, 
	$\gamma_{k}=na_{k}$ with a nonzero integer $n$. Thus $a_{k}=0$
	and it follows by induction on $k$ that $\alpha\in\mathcal{O}_{S}$.
	If further $p\in S_{0}$, then $p$ can not decompose by elements in $S$.
	Thus $\gamma_p=na_{p}$ with a nonzero integer $n$. Thus
	indeed $\gamma_{p}\neq 0$ if and only if $a_{p}\neq 0$.
\end{proof}
\begin{proof}[Proof of Proposition \ref{puiseux}]
	We trace the argument of Theorem 3.5.5 in \cite{Wal}.
	Set $x=t^{q}$ and $f(x^{\frac{1}{q}})=t^{-p}\alpha(t)$ with
	$\alpha(t)\in K[\![t]\!],\ \alpha(0)\neq 0$.
	Then the associated curve $C_{f,q}$ has a good parametrization
	$x=t^{q},\,y=t^{p}\alpha(t)^{-1}$.
	Let 
	\[
		S=\{r\in\mathbb{Z}\mid \text{for some }q\ge 1,\, r\ge
		p-\beta_{q}\text{ and }e_{q}|r
		\},
	\]
	and $S_{0}=\{p-\beta_{q}\mid q\ge 1\}$.
	Then from the hypothesis,
	we have $\alpha\in\mathcal{O}^{*}_{S}$ which shows $\alpha^{-1}\in
	\mathcal{O}^{*}_{S}$ by Lemma \ref{wall}.
	This shows the proposition in the case $p\ge q$.
	If $p<q$, we can put $y=t^{p}\alpha(t)^{-1}=(t\beta(t))^{p}$ with
	$\beta\in \mathcal{O}^{*}_{S}$ by Lemma \ref{wall}.
	Set $u=t\beta(t)$, so that $y=u^{p}$. We can write $t=u\gamma(u)$ with
	$\gamma\in\mathcal{O}^{*}_{S}$. Thus $x=t^{q}=(u\gamma(u))^{q}=
	u^{q}(\delta(u))$ with $\delta(u)\in\mathcal{O}^{*}_{S}$ which
	completes the proof.
\end{proof}
\subsection{Irregularity of connections and curve invariants}
Let us see that the irregularity of connections relates some curve invariants,
intersection numbers and Milnor numbers, of associated curve germs.
\begin{thm}\label{diffinv}
	Let $E_{f,q}=(V,\nabla),\,E_{g,q'}=(W,\nabla')$ 
	be irreducible $K(\!(x)\!)$-connections. Set
	$-p/q=\ord(f),\,-p'/q'=\ord(g)$.
	If $E_{f,q}\not\cong E_{g,q'}$, then
	\[
		I\left(C_{f,q},C_{g,q'}\right)=
		pq'+p'q-\mathrm{Irr}(\mathrm{Hom}_{K(\!(x)\!)}(V,W)).
	\]
	Here $\Hom_{K(\!(x)\!)}(V,W)$ can be naturally seen as a 
	$K(\!(x)\!)$-connection through
	the actions of $\nabla$ and $\nabla'$. Namely the connection 
	$\nabla^{''}$ on $\Hom_{K(\!(x)\!)}(V,W)$ is defined 
	by
	\[
		\nabla^{''}(\phi)(v)=\nabla'(\phi(v))-\phi(\nabla(v))
	\]
	for $\phi\in \Hom_{K(\!(x)\!)}(V,W)$ and $v\in V$.
	Similarly we have
	\[
		I\left(C_{f,q},\frac{\partial}{\partial y}
			C_{f,q}\right)=
		2p(q-1)-\mathrm{Irr}(\mathrm{End}_{K(\!(x)\!)}(V)).
	\]
\end{thm}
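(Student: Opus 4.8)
The plan is to compute both sides of the identity independently and to check that each equals
\[
2p(q-1)+\sum_{j=1}^{q-1}\ord_t\bigl(f(\zeta_q^j t)-f(t)\bigr),\qquad t=x^{\frac1q}.
\]
This runs parallel to the proof of the $\Hom$-formula, with $C_{g,q'}$ replaced by $\tfrac{\partial}{\partial y}C_{f,q}$ and with the ``diagonal'' contributions (which would make $\Hom(V,V)$ differ from the generic case) handled separately. First I would record the good parametrization of the associated curve: since $E_{f,q}$ is irreducible we have $f\in R_q^o$, so the full series $f$ has exact period $q$ under $t\mapsto\zeta_q t$; hence $x=t^q,\ y=1/f(t)$ is a good parametrization of $C_{f,q}$, and $\ord_t(1/f)=p$.

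For the left-hand side I would invoke Lemma \ref{invariants}(1) with the irreducible germ $C_{f,q}$ and the germ $\tfrac{\partial}{\partial y}C_{f,q}$, so that the intersection number is $\ord_t\bigl(\tfrac{\partial}{\partial y}C_{f,q}(t^q,1/f(t))\bigr)$. Writing $f_i(t)=f(\zeta_q^i t)$ and $C_{f,q}=\prod_i(y-1/f_i)$, every summand of $\tfrac{\partial}{\partial y}C_{f,q}$ except the one that omits the factor $y-1/f$ vanishes on the branch $y=1/f$; the surviving term is $\prod_{i\neq 0}(1/f-1/f_i)$. Since $1/f-1/f_i=(f_i-f)/(f f_i)$ and $\ord_t(f)=\ord_t(f_i)=-p$, this yields
\[
I\Bigl(C_{f,q},\tfrac{\partial}{\partial y}C_{f,q}\Bigr)=\sum_{i=1}^{q-1}\bigl(\ord_t(f_i-f)+2p\bigr)=2p(q-1)+\sum_{i=1}^{q-1}\ord_t(f_i-f),
\]
where irreducibility again guarantees $f_i\neq f$ for $1\le i\le q-1$, so each order is finite.

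For the right-hand side I would compute $\mathrm{Irr}(\mathrm{End}_{K(\!(x)\!)}(V))$ through the totally ramified covering $\rho\colon t\mapsto x=t^q$, under which irregularity is multiplied by $q$. As $\rho^*E_{f,q}\cong\bigoplus_{k}L_{f_k}$, where $L_h$ is the rank one connection $\tfrac{d}{dx}+x^{-1}h$ over $K(\!(t)\!)$, we get $\rho^*\mathrm{End}(V)\cong\bigoplus_{k,l}L_{f_l-f_k}$, and each factor has $t$-irregularity $\max\bigl(0,-\ord_t(f_l-f_k)\bigr)$. The $q$ diagonal factors ($k=l$) are trivial, while the pairs with a fixed class $l-k\equiv m\pmod q$ occur exactly $q$ times, so after dividing by the ramification index $q$,
\[
\mathrm{Irr}(\mathrm{End}(V))=\frac1q\sum_{k\neq l}\bigl(-\ord_t(f_l-f_k)\bigr)=-\sum_{m=1}^{q-1}\ord_t(f_m-f).
\]
Comparing the two displays gives $I\bigl(C_{f,q},\tfrac{\partial}{\partial y}C_{f,q}\bigr)=2p(q-1)-\mathrm{Irr}(\mathrm{End}(V))$, and combined with Lemma \ref{invariants2} this also recovers the Milnor number formula of Theorem \ref{milnumb}. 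I expect the irregularity step to be the main obstacle: one must verify that every off-diagonal factor $f_l-f_k$ is genuinely polar, i.e.\ $\ord_t(f_l-f_k)<0$, which is precisely where $f\in R_q^o$ enters (it forbids any nontrivial symmetry of the polar part), and one must bookkeep the factor $1/q$ together with the multiplicity $q$ of each residue class $m$ so that the $x$-adic irregularity collapses to the single sum over $m=1,\dots,q-1$.
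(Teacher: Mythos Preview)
Your argument is correct and follows essentially the same route as the paper: both compute the intersection number via Lemma~\ref{invariants}(1) by observing that only one summand of $\partial_y C_{f,q}$ survives on the branch $y=1/f$, then rewrite $1/f-1/f_i=(f_i-f)/(ff_i)$ to extract the $2p$ per factor, and finally match against the irregularity formula for $\mathrm{End}(V)$. The only cosmetic difference is that the paper works with $\ord_x$ and the full double product $\prod_{i\neq j}$ (invoking independence of the chosen branch), whereas you stay in the $t$-variable with a single sum over $m=1,\dots,q-1$ and recover the irregularity by the pullback-and-divide-by-$q$ argument; your explicit check that $f\in R_q^o$ forces $\ord_t(f_m-f)<0$ is a detail the paper leaves implicit when it simply ``recalls'' the formula $\mathrm{Irr}(\mathrm{End}(V))=-\ord_x\prod_{i\neq j}(f_i-f_j)$.
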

\begin{proof}
	The associated curve germ $C_{f,q}$ has a
	good parametrization, $x=t^{q},\,y=\alpha(t)=
	\frac{1}{f(x^{\frac{1}{q}})}$. Thus
	\begin{align*}
		I\left(C_{f,q},C_{g,q'}\right)&=
		\mathrm{ord}_{t}C_{g,q'}(t^{q},\alpha(t))\\
		&=\mathrm{ord}_{t}\prod_{i=1}^{q'}\left(\alpha(t)-
		\frac{1}{g_{i}(t^{\frac{q}{q'}})}\right)\\
		&=q\cdot\mathrm{ord}_{x}\prod_{i=1}^{q'}\left(
		\frac{1}{f(x^{\frac{1}{q}})}-
		\frac{1}{g(x^{\frac{1}{q'}})}
		\right)\\
		&=q\cdot\mathrm{ord}_{x}\prod_{i=1}^{q'}\left(
		\frac{g_{i}-f}{f g_{i}}
		\right)\\
		&=pq'+p'q+q\cdot\mathrm{ord}_{x}\prod_{i=1}^{q'}
		(g_{i}-f).
	\end{align*}
	Let us note that the intersection number does not depend on good 
	parametrizations, $x=t^{q}, y=1/ f_{j}$, $j=0,
	\ldots,q-1$. Thus 
	\[
	\mathrm{ord}_{x}\prod_{i=1}^{q'}
	(g_{i}-f_{j})=\mathrm{ord}_{x}\prod_{i=1}^{q'}
	(g_{i}-f_{j'})
	\]
	for $1\le j,j'\le q$.
	On the other hand, we have 
	\[
		\mathrm{Irr}(\mathrm{Hom}_{K(\!(x)\!)}(V,W))=
		-\mathrm{ord}_{x}\prod_{i=1}^{q'}\prod_{j=1}^{q}
		(g_{i}-f_{j}).
	\]
	Combining these equations, we have the required one.

	Let us see the second assertion.
	Since $C_{f,q}=\prod_{i=1}^{q}(y-1/ f_{i})$,
	we have $\frac{\partial}{\partial y}C_{f,q}=
	\sum_{i=1}^{q}\prod_{j\neq i}(y-1/f_{j})$.
	Thus we can show  
	\[
	I\left(C_{f,q},\frac{\partial}{\partial y}
			C_{f,q}\right)=
			\ord_{x}\prod_{\substack{1\le i,j\le q\\
			i\neq j}}\left(\frac{1}{f_{i}}-
			\frac{1}{f_{j}}
			\right),
	\]
	as above. Also recall
	\[
		\mathrm{Irr}(\mathrm{End}_{K(\!(x)\!)}(V))=
		-\ord_{x}\prod_{\substack{1\le i,j\le q\\
		i\neq j}}(f_{i}-f_{j}).
	\]
	These equations show the required one as above.
\end{proof}
\begin{thm}\label{milnumb}
	Let $E_{f,q}$ be an irreducible $K(\!(x)\!)$-connection with
	$\ord(f)=-p/q$. Then the Milnor number $\mu$ of 
	the associated curve $C_{f,q}$ is
	\[
		\mu=(2p-1)(q-1)
		-\mathrm{Irr}(\mathrm{End}_{K(\!(x)\!)}(V)).
	\]
\end{thm}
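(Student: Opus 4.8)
The plan is to deduce this from the second identity of Theorem~\ref{diffinv} together with the classical formula relating the Milnor number to the intersection number with a polar curve. Indeed, Lemma~\ref{invariants2} gives $\mu = I(f,\partial f/\partial y) - n + 1$ for an irreducible germ $f$ with $n = I(f,x)$; applying this with $f = C_{f,q}$, the theorem reduces to computing $n = I(C_{f,q},x)$ and inserting the known value of $I(C_{f,q},\partial_y C_{f,q})$.

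First I would pin down $n = I(C_{f,q},x)$. The associated curve carries the good parametrization $x = t^q$, $y = \alpha(t) = 1/f(x^{1/q})$, and from $\ord(f) = -p/q$ we read off $\ord_t\alpha = p$, so $\alpha(t) = t^p\cdot(\text{unit})$. By Lemma~\ref{invariants}(1), $I(C_{f,q},x)$ equals the $t$-order of the function $x = t^q$ along this parametrization, which is $q$. The only subtlety is that this particular good parametrization is the correct one (i.e.\ $m = q$) precisely when $p \ge q$; when $p < q$, the proof of Proposition~\ref{puiseux} shows the good parametrization is instead $y = u^p$, $x = u^q\cdot(\text{unit})$, based on the variable $y$. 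Since the intersection number is independent of the chosen good parametrization, in either range I obtain
\[
	I(C_{f,q},x) = q, \qquad I(C_{f,q},y) = p.
\]

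With $n = q$, Lemma~\ref{invariants2} gives $\mu = I(C_{f,q},\partial_y C_{f,q}) - q + 1$, and substituting the second formula of Theorem~\ref{diffinv},
\[
	\mu = 2p(q-1) - \mathrm{Irr}(\mathrm{End}_{K(\!(x)\!)}(V)) - (q-1)
	= (2p-1)(q-1) - \mathrm{Irr}(\mathrm{End}_{K(\!(x)\!)}(V)),
\]
which is the assertion. I do not expect a genuine obstacle here: the argument is a bookkeeping assembly of results already in hand. The single step warranting care is the evaluation $I(C_{f,q},x) = q$ uniformly across the two cases $p \ge q$ and $p < q$; once the parametrization-independence of intersection numbers is invoked, this is immediate. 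A longer alternative would run through Lemma~\ref{invariants}(2) and the Puiseux characteristic of $C_{f,q}$ from Proposition~\ref{puiseux}, but that forces one to reconcile the curve's invariants $\gcd(e_{i-1},2p-\beta_i)$ with the connection's $e_i = \gcd(e_{i-1},\beta_i)$, so I prefer the intersection-number route.
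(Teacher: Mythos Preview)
Your proposal is correct and follows essentially the same route as the paper: combine the second identity of Theorem~\ref{diffinv} with Lemma~\ref{invariants2}, using $n=I(C_{f,q},x)=q$ to convert $I(C_{f,q},\partial_{y}C_{f,q})=2p(q-1)-\mathrm{Irr}(\mathrm{End}(V))$ into the claimed formula for $\mu$. The paper's proof is the one-line citation of these two results; you have simply unpacked the computation of $n=q$ and the final algebra, and your remark on the two cases $p\ge q$ versus $p<q$ is a harmless extra piece of care.
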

\begin{proof}
	This follows from Lemma \ref{invariants2} 
	and Proposition \ref{diffinv}. 
\end{proof}

We end this subsection with the following proposition which relates 
the irregularity and dual Puiseux characteristics.
\begin{prop}
	Let $E_{f,q}$ be an irreducible $K(\!(x)\!)$-connection 
	with the dual Puiseux
	characteristic $(q,p;\beta_{1},\ldots,\beta_{g})$. Then we have
	\[
		\mathrm{Irr}(\mathrm{End}_{K(\!(x)\!)}(E_{f,q}))=
		\sum_{i=1}^{g}(e_{i-1}-e_{i})\beta_{i}.
	\]
\end{prop}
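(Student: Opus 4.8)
The plan is to read the formula off the Milnor number computation already established. By Theorem \ref{milnumb} we have $\mathrm{Irr}(\mathrm{End}_{K(\!(x)\!)}(E_{f,q}))=(2p-1)(q-1)-\mu$, where $\mu$ is the Milnor number of the associated curve $C_{f,q}$, so it suffices to evaluate $\mu$ in terms of the dual Puiseux characteristic and verify the resulting identity. To do this I would feed the Puiseux characteristic of $C_{f,q}$ supplied by Proposition \ref{puiseux} into the formula $\mu=\sum_{i=1}^{g}(\bar e_{i-1}-\bar e_{i})(\bar\beta_{i}-1)$ of part (2) of Lemma \ref{invariants}, where $\bar\beta_{i}$ and $\bar e_{i}$ denote the Puiseux exponents of the \emph{curve} and their successive gcd's, to be distinguished from the $\beta_{i},e_{i}$ of the connection.

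The crux is a bookkeeping lemma: the two gcd sequences agree, $\bar e_{i}=e_{i}$ for every $i\ge 1$, even though $\bar e_{0}$ need not equal $e_{0}=q$. I would prove this by induction on $i$. For the base step, when $p\ge q$ either $q\mid p$, so $q\mid 2p$ and $\bar e_{1}=\gcd(q,2p-\beta_{1})=\gcd(q,\beta_{1})=e_{1}$, or $q\nmid p$, so the leading exponent forces $\beta_{1}=p$ and $\bar e_{1}=\gcd(q,p)=e_{1}$; when $p<q$ the bound $0<p<q$ gives $q\nmid p$, hence again $\beta_{1}=p$ and $\bar e_{1}=\gcd(p,q-\beta_{1})=\gcd(p,q-p)=\gcd(p,q)=e_{1}$. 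For the inductive step one checks that $e_{i-1}$ divides $2p$ (case $p\ge q$) or divides $p+q$ (case $p<q$), so that $\bar\beta_{i}\equiv-\beta_{i}\pmod{e_{i-1}}$ and therefore $\bar e_{i}=\gcd(e_{i-1},\bar\beta_{i})=\gcd(e_{i-1},\beta_{i})=e_{i}$.

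With $\bar e_{i}=e_{i}$ in hand the computation finishes by telescoping, using $\sum_{i}(e_{i-1}-e_{i})=e_{0}-e_{g}=q-1$. When $p\ge q$ one has $\bar\beta_{i}=2p-\beta_{i}$ and $\bar e_{0}=q=e_{0}$, and substituting into $(2p-1)(q-1)-\mu$ collapses directly onto $\sum_{i}(e_{i-1}-e_{i})\beta_{i}$. The main obstacle is the case $p<q$, where the curve has multiplicity $p$ rather than $q$, so that $\bar e_{0}=p\neq e_{0}=q$ and the telescoping is no longer clean: one must peel off the first term $(\bar e_{0}-\bar e_{1})(\bar\beta_{1}-1)=(p-e_{1})(q-1)$ before telescoping $\sum_{i\ge 2}(e_{i-1}-e_{i})=e_{1}-1$. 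The payoff is a short linear identity, $(p-1+e_{1})(q-1)-(p+q-1)(e_{1}-1)=p(q-e_{1})$, whose left side is produced by the reduction and whose right side is exactly the missing $i=1$ contribution $(e_{0}-e_{1})\beta_{1}=(q-e_{1})p$ to the target sum; checking this identity completes the proof. As an alternative I note that one could instead evaluate $\mathrm{Irr}(\mathrm{End}_{K(\!(x)\!)}(E_{f,q}))=-\ord_{x}\prod_{i\neq j}(f_{i}-f_{j})$ directly, grouping the ordered pairs $(i,j)$ by $d=\gcd(i-j,q)$ and reading $\ord_{t}(f_{i}-f_{j})$ off the dual Puiseux data, but this trades the short identity above for an Euler-function counting argument.
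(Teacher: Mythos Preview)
Your argument is correct, and the identity $(p-1+e_{1})(q-1)-(p+q-1)(e_{1}-1)=p(q-e_{1})$ does close the case $p<q$. But your ``alternative'' is in fact the paper's chosen proof, and it is simpler than you suggest. The paper observes that the Milnor-number route works immediately when $p\ge q$ (exactly your telescoping), and then, rather than patch the case $p<q$, it gives a uniform direct computation: by symmetry it suffices to evaluate $\ord_{x}\prod_{i=1}^{q-1}(f-f_{i})$, and $\ord_{x}(f-f_{i})=-\beta_{j}/q$ holds precisely when $(q/e_{j-1})\mid i$ but $(q/e_{j})\nmid i$. The count of such $i$ in $\{1,\dots,q-1\}$ is $e_{j-1}-e_{j}$ on the nose, since the multiples of $q/e_{j}$ are a subset of the multiples of $q/e_{j-1}$; no Euler-function bookkeeping is required. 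Summing gives the formula without any case split on the size of $p$ relative to $q$.

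So what you buy with the Milnor route is a consistency check linking Theorem~\ref{milnumb}, Proposition~\ref{puiseux}, and Lemma~\ref{invariants}; what the paper's direct argument buys is a one-paragraph proof that treats all $(p,q)$ at once and makes transparent why the characteristic exponents $\beta_{j}$ appear with multiplicities $e_{j-1}-e_{j}$.
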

\begin{proof}
	If $p\ge q$, this follows from 
	Lemma \ref{invariants}, Proposition \ref{puiseux} and 
	Theorem \ref{milnumb}.
	However Proposition 4.13 in \cite{Wal} leads us 
	to the following direct proof.
	Let us consider 
	\[
		\mathrm{ord}_{x}\prod_{i=1}^{q-1}(f-f_{i}).
	\]
	Here $f_{i}(x^{\frac{1}{q}})=f(\zeta_{q}^{i}x^{\frac{1}{q}})$.
	Since $i\frac{\beta_{1}}{q}$ is an integer if and only if 
	$i$ is divisible by $\frac{q}{e_{1}}$, 
	we have $\mathrm{ord}_{x}(f-f_{i})=-\beta_{1}/q$ for $i$ such that 
	$\frac{q}{e_{1}}\not|i$ and this happens $q-e_{1}$ times. 
	Similarly, we can see that $\mathrm{ord}_{x}(f-f_{i})=-\beta_{j}/q$ 
	if and only if $i$ is divisible by $\frac{q}{e_{j-1}}$ but 
	not by $\frac{q}{e_{j}}$ and this happens $e_{j-1}-e_{j}$ times.
	Hence we have 
	\[
		\mathrm{ord}_{x}\prod_{i=1}^{q-1}(f-f_{i})=
		-\frac{1}{q}\sum_{i=1}^{g}(e_{i-1}-e_{i})\beta_{i}
	\]
	which induces the required formula.
\end{proof}
\subsection{Local Fourier transforms and birational transforms}
In \cite{Fan} and \cite{Sab}, Fang and Sabbah computed 
explicit structures of local Fourier transforms
of $E_{f,q}$ as we saw in Theorem \ref{locfourier}.
Whereas Fang's computation is  
relatively direct algebraic calculation, Sabbah's is based on the blowing up
technique of plane curve singularities.
The proposition below may connect these two different approaches.
Roughly to say, $\mathcal{F}^{(0,\infty)}$ and $\mathcal{F}^{(\infty,0)}$
can be seen as the blowing up of associated curves 
and $\mathcal{F}^{(\infty,\infty)}$ 
corresponds to the birational transform
\[
	\begin{array}{cccc}
		\sigma_{3}\colon &x&\mapsto &x_{1}^{-1}y_{1}\\
			    &y&\mapsto&y_{1}.
	\end{array}
\]
More precisely, let us consider an irreducible curve germ $C(x,y)$ with 
the good 
parametrization 
\begin{align*}
	x&=t^{m}\alpha(t),\quad \alpha(0)\neq 0,\\
	y&=t^{n}
\end{align*}
and assume $m\le n$. Then define an irreducible curve germ 
$\sigma_{3}^{*}(C(x,y))(x_{1},y_{1})$ so that the good parametrization
of this curve is 
\begin{align*}
	x_{1}&=t^{n-m}\alpha(t)^{-1},\\
	y_{1}&=t^{n}.
\end{align*}

\begin{prop}\label{fourierblowup}
	Let us take $f\in K( \!(x^{\frac{1}{q}})\!)$ so that 
	the image is in $R_{q}^{o}(x)\backslash\{0\}$ and $\ord(f)=-p/q$. 
	\begin{enumerate}
		\item Suppose that $p<q$. 
			Let us take $g\in K(\!(x_{1}^{\frac{1}{q-p}}))$ 
			so that 
			\[
				\mathcal{F}^{(\infty,0)}(E_{f,q})\cong
				E_{\dot{g},q-p},
			\]
			as in Theorem \ref{locfourier} where $\dot{g}=
			g+\frac{p}{2(q-p)}$.
			Then we have
			\[
				C_{g,q-p}(x_{1},-y_{1})=
				\sigma_{2}^{*}(C_{f,q}(x,y)).
			\]
		\item Let us take $g\in K(\!(x_{1}^{\frac{1}{p+q}}))$ 
			so that 
			\[
				\mathcal{F}^{(0,\infty)}(E_{f,q})\cong
				E_{\dot{g},p+q},
			\]
			as in Theorem \ref{locfourier} where 
			$\dot{g}=g+\frac{p}{2(p+q)}$.
			Then we have
			\[
				C_{f,q}(-x,y)=
				\sigma_{2}^{*}(C_{g,p+q}(x_{1},y_{1})).
			\]
		\item Suppose that $p>q$. 
			Let us take $g\in K(\!(x_{1}^{\frac{1}{p-q}}))$ 
			so that
			\[
				\mathcal{F}^{(\infty,\infty)}(E_{f,q})\cong
				E_{\dot{g},p-q},
			\]
			as in Theorem \ref{locfourier} where 
			$\dot{g}=g+\frac{p}{2(p-q)}$.
			Then we have
			\[
				C_{g,p-q}(x_{1},-y_{1})=
				\sigma_{3}^{*}(C_{f,q}(x,y)).
			\]
	\end{enumerate}
\end{prop}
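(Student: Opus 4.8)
The plan is to prove all three equalities by matching good parametrizations, exactly in the spirit of the proof of Proposition \ref{puiseux}. Throughout I write $t=x^{\frac{1}{q}}$ and $f=t^{-p}\alpha(t)$ with $\alpha(0)\neq 0$, so that, as computed in the proof of Proposition \ref{puiseux}, the associated curve $C_{f,q}$ carries the good parametrization $x=t^{q},\ y=1/f=t^{p}\alpha(t)^{-1}$. Since two irreducible germs coincide up to a unit precisely when their good parametrizations agree up to a reparametrization $t\mapsto(\text{unit})\cdot t$, it suffices in each case to produce the good parametrizations of both sides and read off that they differ only by the asserted sign.

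First I would record how the two blow-up-type operations act on good parametrizations. For $\sigma_{2}^{*}$, if a germ has $y$ of smaller order than $x$, so that after a reparametrization $y=u^{n}$ and $x=(\text{unit})\,u^{m}$ with $m>n$, then by the $x\leftrightarrow y$ mirror of the strict-transform rule recalled earlier for $\sigma_{1}^{*}$ its strict transform is parametrized by $y_{1}=y$ and $x_{1}=x/y$. For $\sigma_{3}^{*}$, the definition given just before the statement says directly that a germ with $x=t^{m}\alpha(t)$, $y=t^{n}$, $m\le n$, has $\sigma_{3}^{*}$-image parametrized by $x_{1}=y/x=t^{n-m}\alpha(t)^{-1}$, $y_{1}=y$.

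The core of the argument is to turn the algebraic data of Theorem \ref{locfourier} into the good parametrization of the transformed curve, using the theorem's relations as exact identities. In case (1) the base variable of $\mathcal{F}^{(\infty,0)}(E_{f,q})$ is $\hat{z}$ with $f=z\hat{z}$, so $x_{1}=\hat{z}=\zeta f=t^{q-p}\alpha(t)$ after substituting $\zeta=t^{q}$, while $g=\dot{g}-\frac{p}{2(q-p)}=-f$ gives $y_{1}=1/g=-t^{p}\alpha(t)^{-1}$; hence $C_{g,q-p}$ has parametrization $(t^{q-p}\alpha,\,-t^{p}\alpha^{-1})$. On the other side, since $p<q$ the multiplicity of $C_{f,q}$ equals $p$ and lies in the $y$-direction, so the rule above gives $\sigma_{2}^{*}(C_{f,q})$ the parametrization $x_{1}=x/y=t^{q-p}\alpha(t),\ y_{1}=y=t^{p}\alpha(t)^{-1}$; the two agree after $y_{1}\mapsto-y_{1}$, which is exactly $C_{g,q-p}(x_{1},-y_{1})=\sigma_{2}^{*}(C_{f,q})$. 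Cases (2) and (3) run identically. In (2) one uses $f=-z\hat{z}$, $g=\dot{g}-\frac{p}{2(p+q)}=f$, so $x_{1}=\hat{\zeta}=-z/f=-t^{p+q}\alpha(t)^{-1}$, and blowing up $C_{g,p+q}$ (whose smaller order is again $p$, in the $y$-direction) recovers the parametrization $(-t^{q},\,t^{p}\alpha^{-1})$ of $C_{f,q}(-x,y)$. In (3), with $p>q$, the smaller order of $C_{f,q}$ is now $q$ in the $x$-direction, so one applies the $\sigma_{3}^{*}$ rule and uses $x_{1}=\hat{\zeta}=1/(\zeta f)=t^{p-q}\alpha(t)^{-1}$ together with $g=-f$, producing the sign discrepancy $y_{1}\mapsto-y_{1}$ asserted.

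I expect the main obstacle to be bookkeeping rather than anything conceptual: correctly translating the symbol-level relations $f=\pm z\hat{z}$ and $g=\pm f$, which live in the quotient rings $R_{q}^{o}$ and in which the constant ambiguity is stripped off precisely by passing from $\dot{g}$ to $g$, into honest Puiseux parametrizations, and then tracking the unit factor $\alpha(t)$, the reparametrization $s=x_{1}^{1/q'}=(\text{unit})\cdot t$, and the signs carefully enough that the residual discrepancy is seen to be exactly one sign change ($-y_{1}$ in (1) and (3), $-x$ in (2)) and nothing more. One should also verify in each case that the curve being blown up has its multiplicity in the direction matched to the chosen chart ($\sigma_{2}$ for the $y$-direction, $\sigma_{3}$ for the $x$-direction), which is where the hypotheses $p<q$ and $p>q$ genuinely enter.
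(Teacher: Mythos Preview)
Your proposal is correct and follows essentially the same approach as the paper's proof: both arguments identify the good parametrizations of the two curves via the relations $f=\pm z\hat z$, $g=\pm f$ from Theorem~\ref{locfourier} and then observe that these parametrizations differ exactly by the quadratic (resp.\ $\sigma_3$) transform together with the asserted sign change. The paper's version is terser---it records only the identities $x_1y_1=-x$, $y_1=-y$ and invokes that an irreducible germ is determined by its good parametrization---while you spell out the unit factor $\alpha(t)$ and the chart conditions explicitly, but the content is the same.
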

\begin{proof}It may suffices to show $(1)$, since the others are similar.
	The curve germs $C_{f,q}$ and $C_{\tilde{g},q-p}$ have 
	good parametrizations $x=t^{q},\,y=\alpha(t)
	=1/f(x^{\frac{1}{q}})$ and $x_{1}=u^{q-p},\,y_{1}=
	\beta(u)=1/g(x_{1}^{\frac{1}{q-p}})$ respectively.
	By Theorem \ref{locfourier}, we have
	\begin{align*}
		x_{1}&=xf(x^{\frac{1}{q}}),&
		y_{1}&=\beta(u)=\frac{1}{g(x_{1}^{\frac{1}{q-p}})}=
		-\frac{1}{f(x^{\frac{1}{q}})},
	\end{align*}
	that is,
	\begin{align*}
		x_{1}y_{1}&=-x,&
		y_{1}=&-y.
	\end{align*}
	Since each irreducible curve germ is determined 
	by a good parametrization, we are done.
\end{proof}
\subsection{Resolution of ramified irregular singularities}
In the previous section, we saw that local Fourier transforms could be regarded
as the birational transforms of associated curve germs. 
As is well known, singularities
of plane curve germs have a resolution via blowing up. We shall seek an 
analogy of the resolution of singularities for irreducible 
connections via local Fourier 
transforms.

First, let us see how the local Fourier transforms change dual
Puiseux characteristics of connections.
\begin{prop}\label{fourierpuiseux}
	Suppose that an irreducible $E_{f,q}$ has the
	dual Puiseux characteristic $(q,p; \beta_{1},\ldots,\beta_{g})$.
	\begin{enumerate}
		\item If $p<q$, then the dual 
			Puiseux characteristic of 
			$\mathcal{F}^{(\infty,0)}(E_{f,q})$ is 
			\begin{align*}
				(q-p,\beta_{1};\beta_{2},\ldots,\beta_{g})
				&\text{ if }
				(q-p)|\beta_{1},\\
				(q-p,\beta_{1};\beta_{1},\ldots,\beta_{g})&
				\text{ otherwise}.
			\end{align*}
			Here we note that $p=\beta_{1}$	
			under the assumption $p<q$.
		\item The dual Puiseux characteristic of 
			$\mathcal{F}^{(0,\infty)}(E_{f,q})$ is 
			\begin{align*}
				(p+q,\beta_{1};\beta_{1},\ldots,\beta_{g})&
				\text{ if } p=\beta_{1},\\
				(p+q,p;p,\beta_{1},
				\ldots,\beta_{g})&
				\text{ otherwise}.
			\end{align*}
		\item If $p>q$, then the dual Puiseux characteristic of 
			$\mathcal{F}^{(\infty,\infty)}(E_{f,q})$ is 
			\begin{align*}
				(p-q,\beta_{1};\beta_{2},\ldots,\beta_{g})&
				\text{ if }p=\beta_{1}\text{ and }
				(p-q)|\beta_{1},\\
				(p-q,p;\beta_{1},\ldots,\beta_{g})&
				\text{ otherwise}.
			\end{align*}

	\end{enumerate}
\end{prop}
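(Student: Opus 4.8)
The plan is to prove all three parts by passing to the associated curves and chaining together three results already at our disposal: Proposition~\ref{puiseux}, which translates between the dual Puiseux characteristic of a connection and the ordinary Puiseux characteristic of its associated curve; Proposition~\ref{fourierblowup}, which identifies each local Fourier transform with a birational transform of the associated curve ($\sigma_2^*$ for $\mathcal{F}^{(\infty,0)}$ and $\mathcal{F}^{(0,\infty)}$, and $\sigma_3^*$ for $\mathcal{F}^{(\infty,\infty)}$); and the blow-up formula for Puiseux characteristics recalled above (cf.\ Theorem~3.5.5 of \cite{Wal}), which computes the effect of $\sigma^*$. A preliminary observation organizes the case distinctions: from the definition of $\beta_1$ one has $\beta_1=p$ exactly when $q\nmid p$, and in particular $p<q$ forces $\beta_1=p$. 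This is what makes the leading exponent of $C_{f,q}$ computable.

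For part (1), assume $p<q$, so $\beta_1=p$. By Proposition~\ref{puiseux} the curve $C_{f,q}$ has Puiseux characteristic $(p;\,q,\,p+q-\beta_2,\ldots,p+q-\beta_g)$, of multiplicity $m=p$ and first exponent $\gamma_1=q$. By Proposition~\ref{fourierblowup}(1) the curve of $\mathcal{F}^{(\infty,0)}(E_{f,q})$ is the strict transform $\sigma_2^*(C_{f,q})$, so I would apply the blow-up formula with $m=p$ and $\gamma_1=q$. The trichotomy $\gamma_1>2m$, $\gamma_1<2m$, $(\gamma_1-m)\mid m$ reads as $q>2p$, $q<2p$ and $(q-p)\mid p$; feeding the resulting Puiseux characteristics back through Proposition~\ref{puiseux}, now with ramification $q-p$ and order $p$, collapses the three geometric branches into the two arithmetic alternatives $(q-p)\mid\beta_1$ and its negation, giving exactly the claimed dual Puiseux characteristics. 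The boundary value $q=2p$ lands in the divisibility branch and is checked to agree.

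For part (2) the direction is reversed: Proposition~\ref{fourierblowup}(2) exhibits $C_{f,q}$ itself as $\sigma_2^*(C_{g,p+q})$, so $C_{f,q}$ is the blow-up of the target curve. I would therefore run the blow-up formula backwards; equivalently, take the candidate dual Puiseux characteristic of $\mathcal{F}^{(0,\infty)}(E_{f,q})$, form its curve via Proposition~\ref{puiseux}, blow it up, and verify that the result reproduces the Puiseux characteristic of $C_{f,q}$. The two alternatives $p=\beta_1$ (i.e.\ $q\nmid p$) and $p\neq\beta_1$ (i.e.\ $q\mid p$) correspond to whether the first blow-up exponent exceeds twice the multiplicity. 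Part (3) is the same idea for $p>q$, with one extra wrinkle: $\sigma_3$ is not the standard blow-up $\sigma^*$ covered by Wall's formula. Reading the text's definition, $\sigma_3$ acts by $x_1=y/x,\ y_1=y$, so on good parametrizations it sends $x=t^{m}\alpha(t),\,y=t^{n}$ (with $m\le n$) to $x_1=t^{\,n-m}\alpha(t)^{-1},\,y_1=t^{n}$; I would compute the Puiseux characteristic of this explicitly given curve directly and then translate via Proposition~\ref{puiseux}, the case split $p=\beta_1$ together with $(p-q)\mid\beta_1$ versus the rest arising exactly as in the standard blow-up.

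The main obstacle I anticipate is part (3) together with the bookkeeping common to all three parts. The delicate points are: $\sigma_3^*$ must be analyzed by hand rather than quoted from Wall's formula, since it mixes a blow-up with an inversion of the first coordinate; and every application of Proposition~\ref{puiseux} requires knowing, for the transformed connection, whether its order is at least its ramification, because that determines which of the two formulas of Proposition~\ref{puiseux} applies and hence which variable carries the multiplicity. Keeping straight this $p<q$ versus $p\ge q$ alternation as the ramification jumps from $q$ to $q-p$, $p+q$, or $p-q$ is where the computation is most error-prone, and it is precisely this alternation that reorganizes the three geometric branches of the blow-up formula into the divisibility dichotomies stated in the proposition.
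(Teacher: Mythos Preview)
Your approach is correct and is a reasonable alternative to the paper's, but the paper takes a more direct route. Rather than translating to ordinary Puiseux characteristics via Proposition~\ref{puiseux}, invoking Wall's blow-up formula as a black box, and translating back, the paper stays at the level of good parametrizations throughout. It writes $C_{f,q}$ as $x=t^{q},\,y=t^{p}\alpha(t)$ with $\alpha\in\mathcal{O}^{*}_{S}$ (the semigroup $S$ encoding the characteristic exponents), switches to $x=u^{q}\delta(u),\,y=u^{p}$ with $\delta\in\mathcal{O}^{*}_{S}$, then reads off the parametrization of the transformed curve directly from Proposition~\ref{fourierblowup} and applies Lemma~\ref{wall} to see that the new parametrization again has its unit part in $\mathcal{O}^{*}_{S}$. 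The dual Puiseux characteristic can then be read off from the exponents without any case analysis of the type $\gamma_{1}>2m$ versus $\gamma_{1}<2m$.

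The payoff of the paper's approach is most visible in part~(3), which is exactly the obstacle you flag: since $\sigma_{3}$ is not the standard blow-up, Wall's formula does not apply, and you would have to compute the effect of $x_{1}=t^{n-m}\alpha(t)^{-1}$ on the characteristic exponents by hand. The paper handles this uniformly, because Lemma~\ref{wall}(1) was deliberately stated (and proved) for negative exponents $m\in\mathbb{Z}$ precisely so that the inversion $\alpha\mapsto\alpha^{-1}$ preserves $\mathcal{O}^{*}_{S}$; the step $\xi_{1}=-u^{q-p}\delta(u)\Rightarrow x_{1}=u^{p-q}\epsilon(u)$ with $\epsilon\in\mathcal{O}^{*}_{S}$ is then immediate. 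Your route would work, but the ad hoc computation you anticipate for $\sigma_{3}$ would end up reproving this case of Lemma~\ref{wall}. Conversely, your approach has the virtue of making the connection to the classical blow-up formula completely explicit, whereas the paper's proof effectively re-derives that formula inside the argument.
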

\begin{proof}
	We use the same notation in the proof of Proposition \ref{puiseux}.
	First we note that $E_{h,r}$ and $E_{h+\alpha,r}$ with $
	h\in R_{r}^{o}$ and $\alpha\in K$ have the same dual 
	Puiseux characteristic.
	Thus it is enough to know the Puiseux characteristic of 
	$C_{g,*}$ in Proposition \ref{fourierblowup}.
	Let $x=t^{q},\,y=t^{p}\alpha(t)$, $\alpha\in\mathcal{O}^{*}_{S},\,
	\alpha(0)\neq 0$ 
	be a good parametrization of 
	$C_{f,q}$. As we see in the proof of Proposition \ref{puiseux},
	we have another good parametrization $x=u^{q}\delta(u),\, y=u^{p}$, 
	$\delta\in\mathcal{O}^{*}_{S},\,\delta(0)\neq 0$.
	Then by Proposition \ref{fourierblowup}, $C_{g,q-p}$ has a
	good parametrization $x_{1}=-u^{q-p}\delta(u),\,y_{1}=-u^{p}$.
	Solving $x_{1}=s^{q-p}$, we have another good parametrization
	$x_{1}=s^{q-p},\,y_{1}=s^{p}\gamma(s)$, 
	$\gamma\in\mathcal{O}^{*}_{S},\,\gamma(0)\neq 0$. 
	Thus we have $(1)$.
	We can show $(2)$ in the similar way as $(1)$.

	Let us see $(3)$. We have a good parametrization 
	$x=t^{q},\,y=t^{p}\alpha(t)$, $\alpha\in\mathcal{O}^{*}_{S},\,\alpha(0)
	\neq 0$ of $C_{f,q}$. By solving $u^{p}=t^{p}\alpha(t)$, we have
	another good parametrization $x=u^{q}\delta(u),\,y=u^{p}$ as above.
	Then by Proposition \ref{fourierblowup}, $C_{g,p-q}$ has a
	good parametrization $\xi_{1}=-u^{q-p}\delta(u),\,y_{1}=u^{p}$.
	Here $\xi_{1}=1/x_{1}$. Lemma \ref{wall} allows us to find $\epsilon(u)
	\in\mathcal{O}^{*}_{S},\,\epsilon(0)\neq 0$ such that 
	$x_{1}=u^{p-q}\epsilon(u)$. Finally solving $x_{1}=s^{p-q}$, we 
	have another good parametrization 
	$x_{1}=s^{p-q},\,y_{1}=s^{p}\gamma(s)$, $\gamma\in\mathcal{O}^{*}_{S},
	\,\gamma(0)\neq 0$. Thus we have $(3)$.
\end{proof}

We can obtain $E_{f+ax^{-n},q}$ from $E_{f,q}$ by the tensor product
\[
	E_{f,q}\otimes \left(\mathbb{C}(\!(x)\!),
	\frac{d}{dx}+ax^{-n-1}\right)\cong
	E_{f+ax^{-n},q},
\]
and call this process the {\em addition}.
Suppose that  $E_{f,q}$ is irreducible and has the dual Puiseux characteristic 
$(q,p;\beta_{1},\ldots,\beta_{g})$ with $p\neq \beta_{1}$.
Then applying the addition repeatedly, we can obtain a connection with 
the dual Puiseux characteristic $(q,\beta_{1};\beta_{1},\ldots,\beta_{g})$.

The following theorem determines a necessary and sufficient condition for
an irreducible $E_{f,q}$ to have a resolution of  
ramified irregular singularity via local Fourier transforms.
\begin{thm}\label{main}
	Suppose that an irreducible $E_{f,q}$ has the dual Puiseux 
	characteristic $(q,p;\beta_{1},\ldots,\beta_{g})$. Then 
	we can reduce $E_{f,q}$ to a rank 1 connection by a finite iteration
	of local Fourier
	transforms and additions if and only if we have
	\[
		e_{i-1}\equiv \pm e_{i}\ (\mathrm{mod}\,\beta_{i})
	\]
	for all $i=1,\ldots,g$. Here $e_{0}=q$.
\end{thm}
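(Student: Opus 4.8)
The plan is to induct on the length $g$ of the dual Puiseux characteristic, using Proposition \ref{fourierpuiseux} to track how the three local Fourier transforms, together with additions, move the characteristic. The rank of $E_{f,q}$ equals $q=e_0$, and since $\beta_1$ is by definition not divisible by $q$ we have $e_1=\gcd(q,\beta_1)<e_0$; as $E_{f,q}$ stays irreducible under all the operations, ``rank $1$'' is equivalent to ``$q=1$'', which in turn is equivalent to the empty-exponent case $g=0$. Thus it suffices to show that the leading exponent $\beta_1$ can be removed by finitely many transforms and additions if and only if $e_0\equiv\pm e_1\pmod{\beta_1}$, after which the resulting connection has dual Puiseux characteristic $(e_1,\beta_2;\beta_2,\ldots,\beta_g)$ and we recurse.

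First I would use additions to normalize to $p=\beta_1$ (always possible by the addition paragraph preceding the theorem), and then read off from Proposition \ref{fourierpuiseux}, in the case $p=\beta_1$, the action on the pair $(q,\beta_1)$: the transform $\mathcal{F}^{(\infty,0)}$ (valid for $\beta_1<q$) sends $q\mapsto q-\beta_1$; the transform $\mathcal{F}^{(\infty,\infty)}$ (valid for $\beta_1>q$) sends $q\mapsto\beta_1-q$; and $\mathcal{F}^{(0,\infty)}$ sends $q\mapsto q+\beta_1$. In all three cases $\beta_1$ and the tail $\beta_2,\ldots,\beta_g$ are preserved, and $q\bmod\beta_1$ is preserved up to sign. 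Hence the set of ramification indices reachable from $q$ while $\beta_1$ survives is exactly $\{\,Q>0 : Q\equiv\pm q\pmod{\beta_1}\,\}$.

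Next I would pin down the drop condition. From Proposition \ref{fourierpuiseux} the exponent $\beta_1$ disappears precisely when the divisibility $(q-\beta_1)\mid\beta_1$ (for $\mathcal{F}^{(\infty,0)}$) or $(\beta_1-q)\mid\beta_1$ (for $\mathcal{F}^{(\infty,\infty)}$) holds; a short $\gcd$ computation shows that each forces $q-\beta_1=e_1$, respectively $\beta_1-q=e_1$, i.e.\ $q=\beta_1+e_1$ or $q=\beta_1-e_1$. Combining this with the reachable-set description, $\beta_1$ can be removed if and only if one of the values $\beta_1\pm e_1$ is reachable, i.e.\ if and only if $\pm e_1\equiv\pm q\pmod{\beta_1}$ for some choice of signs, which is exactly $e_0=q\equiv\pm e_1\pmod{\beta_1}$. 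After removal the characteristic is $(e_1,\beta_1;\beta_2,\ldots,\beta_g)$, and since $e_1\mid\beta_1$ a further string of additions lowers the second slot to give $(e_1,\beta_2;\beta_2,\ldots,\beta_g)$; the induction hypothesis then supplies the remaining congruences $e_{i-1}\equiv\pm e_i\pmod{\beta_i}$ for $i\ge 2$.

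The main obstacle, requiring the most care, is the necessity direction: I must exclude any exotic sequence of transforms that removes a later exponent $\beta_i$ without first clearing $\beta_1$, or that otherwise circumvents the congruence. The key point is that Proposition \ref{fourierpuiseux} shows the tail $\beta_2,\ldots,\beta_g$ is rigid, being untouched as long as $\beta_1$ is present, so every reduction must proceed from the left, and the signed invariant $q\bmod\beta_1$ is a genuine obstruction to dropping $\beta_1$ when $e_0\not\equiv\pm e_1\pmod{\beta_1}$. I would also dispose of the degenerate boundary cases, such as $e_1=\beta_1$ where $\beta_1-e_1=0$ and only the $\mathcal{F}^{(\infty,0)}$ route is admissible, and verify that the Euclidean-style descent in $q$ terminates; both are routine once the reachable-set description and the drop condition are established.
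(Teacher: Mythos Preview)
Your sufficiency argument is the same as the paper's: normalize to $p=\beta_1$ by additions, then iterate $\mathcal{F}^{(\infty,0)}$ (with one application of $\mathcal{F}^{(\infty,\infty)}$ in the $-e_1$ case) until the first exponent drops, and recurse.

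For necessity the paper takes the \emph{reverse} route: it starts from a rank~$1$ connection $(1,p;\,)$ and builds upward by inverse local Fourier transforms and additions, checking step by step that every dual Puiseux characteristic producible this way satisfies all the congruences. Your forward invariant $\pm q\bmod\beta_1$ is more conceptual, but there is a gap you have not fully closed. You analyze only the three transforms under the standing hypothesis $p=\beta_1$; however, nothing forbids first adding to some $p\neq\beta_1$ (so $q\mid p$) and then applying $\mathcal{F}^{(\infty,\infty)}$ or $\mathcal{F}^{(0,\infty)}$. By Proposition~\ref{fourierpuiseux} (case $p\neq\beta_1$) such a move prepends a new leading exponent $p$ to the $\beta$-list, so the symbol ``$\beta_1$'' in your invariant changes meaning and $\pm q\bmod\beta_1$ is no longer the quantity you were tracking. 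The repair is to note that in this situation the new $e'_1=\gcd(p\mp q,\,p)=\gcd(p,q)=q$ equals the old $e_0$, so the \emph{entire} old list of congruences is shifted down one index intact, while the freshly created top congruence $e'_0\equiv -e'_1\pmod{p}$ holds automatically; hence no exotic sequence can erase a failing congruence. This is the real content of your phrase ``the tail is rigid,'' and it needs to be stated and verified rather than asserted. Once that is done, your argument and the paper's are equivalent: yours isolates a clean invariant, while the paper's backward enumeration is more robust against the delicate $p\neq\beta_1$ cases of Proposition~\ref{fourierpuiseux}.
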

\begin{proof}
	First we assume that $e_{i-1}\equiv 
	\pm e_{i}\ (\mathrm{mod}\,\beta_{i})$
	for all $i=1,\ldots,g$.
	Applying additions, we may suppose that $E_{f,q}$ has the dual Puiseux
	characteristic $(q,\beta_{1};\beta_{1},\ldots,\beta_{g})$.
	If $q=e_{0}\equiv e_{1}\ (\mathrm{mod}\, \beta_{1})$, then
	$q\ge \beta_{1}$ and Proposition \ref{fourierpuiseux} shows that 
	we can reduce the connection to one with 
	the dual Puiseux characteristic 
	$(e_{1},\beta_{1};\beta_{2},\ldots,\beta_{g})$ by a finite iteration
	of $\mathcal{F}^{(\infty,0)}$. 
	If $q=e_{0}\equiv -e_{1}\ (\mathrm{mod}\, \beta_{1})$,
	then Proposition \ref{fourierpuiseux} shows that 
	we can reduce the connection to one with 
	the dual Puiseux characteristic 
	$(\beta_{1}-e_{1},\beta_{1};\beta_{1},\ldots,\beta_{g})$ 
	by a finite iteration
	of $\mathcal{F}^{(\infty,0)}$. Applying 
	$\mathcal{F}^{(\infty,\infty)}$ to this connection, we have one with 
	$(e_{1},\beta_{1};\beta_{2},\ldots,\beta_{g})$.
	Thus in both cases, we moreover apply the addition and obtain 
	a connection with $(e_{1},\beta_{2};\beta_{2},\ldots,\beta_{g})$.
	We can repeat this process to reduce the connection to a rank 1
	connection with $(e_{g}=1,\beta_{g};\, )$ by our hypothesis.

	Conversely, we assume that $E_{f,q}$ can be reduced to 
	a rank 1 connection  by local Fourier
	transforms and additions. Namely $E_{f,q}$ is constructed 
	from a rank 1 connection by the inversion of local Fourier 
	transforms.
	
	\noindent\textbf{Step 1.}
	Let us start from
	  a rank 1 connection with the dual Puiseux characteristic 
	  $(1,p;\,)$, $p>1$. Then possible inverse transformations are  
	  $(\mathcal{F}^{(\infty,0)})^{-1}$ 
	  and $(\mathcal{F}^{(\infty,\infty)})^{-1}$. 
	  
	  (1-i) Let us apply $(\mathcal{F}^{(\infty,0,)})^{-1}$.
	  Then we have the dual Puiseux characteristic $(1+p,p;p)$.
	  After applying 
	  possible inverse local Fourier transforms, 
	  $(\mathcal{F}^{(\infty,0,)})^{-1}$
	  and $(\mathcal{F}^{(0,\infty)})^{-1}$, 
	  we obtain the  dual Puiseux characteristic 
	  $(q,p;p)$ where $q\equiv 1
	  \ (\mathrm{mod}\,p)$ or go back to $(1,p;\,)$.
	  
	  (1-ii) Let us
	  apply $(\mathcal{F}^{(\infty,\infty)})^{-1}$. 
	  Then the resulting dual Puiseux characteristic 
	  is $(p-1,p;p)$. After applying possible inverse 
	  local Fourier transforms,
	  $(\mathcal{F}^{(\infty,\infty)})^{-1}$ 
	  and $(\mathcal{F}^{(\infty,0)})^{-1}$,
	  we obtain the dual Puiseux characteristic  $(q,p;p)$ where
	  $q\equiv -1\ (\mathrm{mod}\,p)$ or go back to $(1,p;\,)$.
	  
	  \noindent\textbf{Step 2.}
	  Next let us start from the dual Puiseux characteristic $(q,p;p)$ with
	  $q\equiv \pm 1\ (\mathrm{mod}\,p)$ and apply an addition. Then
	  we have the dual Puiseux characteristic $(q,p_1;p)$ with $p_{1}>p$.
	  Set $e_{1}=\mathrm{gcd}(q,p_{1})$. 
	  Now possible inverse transformations are 
	  $(\mathcal{F}^{(\infty,0)})^{-1}$
	  and $(\mathcal{F}^{(\infty,\infty)})^{-1}$.

	  (2-i) Applying $(\mathcal{F}^{(\infty,0)})^{-1}$, we obtain 
	  $(q+p_{1},p_{1};p_{1},p)$. 
	 	  After applying possible inverse local Fourier transforms,
		  $(\mathcal{F}^{(0,\infty)})^{-1}$ 
		  and $(\mathcal{F}^{(\infty,0,)})^{-1}$,
	  we obtain the dual Puiseux characteristic $(q_{1},p_{1};p_{1},p)$
	  with $q_{1}\equiv e_{1}\ (\mathrm{mod}\,p_{1})$ or 
	  go back to $(q,p_{1};p)$.
	  	  
	  (2-ii) Applying $(\mathcal{F}^{(\infty,\infty)})^{-1}$,
	  we obtain
	  $(p_{1}-q,p_{1};p_{1},p)$.
	  After applying possible inverse local Fourier transforms,
	  $(\mathcal{F}^{(\infty,\infty)})^{-1}$ 
	  and $(\mathcal{F}^{(0,\infty)})^{-1}$,
	  we obtain the dual Puiseux characteristic $(q_{1},p_{1};p_{1},p)$
	  with $q_{1}\equiv -e_{1}\ (\mathrm{mod}\,p_{1})$ or 
	  go back to $(q,p_{1};p)$.
	  
	  Our possible transformations are the iteration of these process.
	  Thus the obtained dual Puiseux characteristic  $(p,q;\beta_{1},
	  \ldots,\beta_{g})$ satisfies the required conditions.
\end{proof}
\section{Sequences of total orders and Stokes structures}
In this section we restrict the field $K$ to the field of complex number field
$\mathbb{C}$. 
We denote the ring of convergent power series, 
the field of meromorphic functions near $0$  and 
the ring of convergent power series of $x$ and $y$  by 
$\mathbb{C}\{x\}$, $\mathbb{C}(\!\{x\}\!)$ 
and $\mathbb{C}\{x,y\}$ respectively.
Let us define $k$-th root $x^{\frac{1}{k}}$ of $x$ so that 
it takes a real value when $x$ is real and positive.
Let us consider $f\in \mathbb{C}(\!(x^{\frac{1}{q}})\!)$ whose image 
is in $R_{q}^{o}(x)\backslash\{0\}$ and suppose that $E_{f,q}$ has the 
dual Puiseux characteristic $(q,p;\beta_{1},\ldots,\beta_{s})$.
Then we define 
\[
	\tilde{f}(x^{\frac{1}{q}})=\sum_{i=1}^{g}a_{\beta_{i}}
x^{-\frac{\beta_{i}}{q}}
\]
and $\tilde{f}_{i}(x^{\frac{1}{q}})=\tilde{f}(\zeta_{q}^{i}x^{\frac{1}{q}})$
for $i=1,\ldots,q$, 
where we write $f(x^{\frac{1}{q}})=
a_{p}x^{-\frac{p}{q}}+a_{p-1}x^{-\frac{p-1}{q}}+\cdots$.
If $x$ moves in a small circle 
$S_{\eta}=\{z\in \mathbb{C}\mid 
|z|=\eta\}$, 
the order of sizes of $\mathrm{Re}(\tilde{f}_{i}(x^{\frac{1}{q}}))$
for $i=0,\ldots,q-1$ 
change according to the argument of $x$.
This is one of the reasons of the Stokes phenomenon.
Thus to understand the Stokes phenomenon of the connections 
over $\mathbb{C}(\!\{x\}\!)$ formally isomorphic to $E_{f,q}$,
we study the closed curve
\[
	\mathrm{St}=\left\{(x,y)\,\middle|\, x\in S_{\eta},\,
	y=\mathrm{Re}(\tilde{f}(x^{\frac{1}{q}}))
	\right\}
\]
in this subsection.
This curve can be seen as the projection of the closed curve
\[
	K=\left\{(x,y)\,\middle|\,x\in S_{\eta},\,
		y=\frac{1}{\tilde{f}(x^{\frac{1}{q}})}
	\right\}
\]
by $y\mapsto \mathrm{Re}(1/y)$.
The closed curve $K$ is obtained by restricting $x\in S_{\eta}$ in 
the associated curve germ $C_{\tilde{f},q}(x,y)\in 
\mathbb{C}\{x,y\}$ 
and it is well known that $K$ can be seen as an iterated torus knot.
\subsection{Braids and Plane curve germs}\label{braid}
Let us recall the well known theorem by Brauner 
that irreducible plane curve germs 
describe iterated torus knots around the singular point
. The detail can be found in standard references (\cite{BrKn} for instance). 
Let $C(x,y)\in \mathbb{C}\{x,y\}$ be an irreducible plane curve germ with
the Puiseux characteristic $(m;\beta_{1},\ldots,\beta_{g})$. 
Exchanging $x$ and 
$y$ if necessary, we may assume that $f$ has a good parametrization 
$x=t^{m},\,
y=\sum_{i\ge n}a_{i}t^{i}\in \mathbb{C}\{t\}, (a_{n}\neq 0),$ 
with $n\ge m$.
If we let $x$ run  around a sufficiently small circle 
,then 
\[
	K=\left\{(x,y)\,\middle|\, x\in S_{\eta},
		\ y=\sum_{i\ge n}a_{i}x^{\frac{i}{m}}
	\right\}
\]
describes a knot in a solid torus
\[
	S_{\eta}\times D_{\delta}=
\left\{(\eta e^{\sqrt{-1}s},\epsilon e^{\sqrt{-1}t})\mid 
s,t\in \mathbb{R},\,0\le \epsilon \le \delta\right\}
\]
with a suitable $\delta>0$.
\begin{thm}[K.~Brauner \cite{Brau}]
	The above $K$ is an iterated torus knot of order $g$ and type 
	$(m/e_{1},\beta_{1}/e_{1}),(e_{1}/e_{2},\beta_{2}/e_{2}),\ldots,
	(e_{g-1}/e_{g},\beta_{g}/e_{g})$.
\end{thm}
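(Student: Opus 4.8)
The plan is to reduce the statement to the classical theorem of Brauner on iterated torus knots by directly constructing the good parametrization of the germ $C(x,y)$ and reading off the knot type. The final statement is Brauner's theorem itself, stated as a citation, so the proof I would give is a structural one: I would explain how the Puiseux characteristic $(m;\beta_{1},\ldots,\beta_{g})$ organizes the successive torus-knot cabling data $(m/e_{1},\beta_{1}/e_{1}),\ldots,(e_{g-1}/e_{g},\beta_{g}/e_{g})$ through the iterated solid-torus structure around the singular point.

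First I would set up the geometry. Given the good parametrization $x=t^{m}$, $y=\sum_{i\ge n}a_{i}t^{i}$, I would let $x$ traverse the circle $S_{\eta}=\{|z|=\eta\}$ once, so that $t$ traverses $\eta^{1/m}$-radius circle and $y$ sweeps out the closed curve $K$ inside the solid torus $S_{\eta}\times D_{\delta}$. The essential observation is that the leading term $a_{n}x^{n/m}$ already produces a $(m,n)$-torus knot after accounting for the $\gcd$; the lower-order terms do not change the isotopy type because they are subdominant in the tubular neighborhood. I would make this precise by a scaling/homotopy argument: replacing each $a_{i}$ by $s\,a_{i}$ for $i>n$ and letting $s$ run from $1$ to $0$ gives an isotopy of knots in the solid torus provided $\delta$ is chosen small enough that the curve stays embedded throughout.

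Next I would carry out the induction on $g$, the number of characteristic exponents. The base case $g=1$ is the torus knot $(m/e_{1},\beta_{1}/e_{1})$, coming from the fact that $\beta_{1}$ is the first exponent indivisible by $m$, so the $\gcd$-reduced pair $(m/e_{1},\beta_{1}/e_{1})$ is coprime and yields a genuine $(m/e_{1},\beta_{1}/e_{1})$-torus knot. For the inductive step, I would exploit the filtration coming from the divisor sequence $m\supset e_{1}\supset\cdots\supset e_{g}=1$: the terms with exponent divisible by $e_{1}$ assemble into a curve that, viewed at the first scale, is an $e_{1}$-fold cable of the inner torus knot determined by the remaining characteristic pairs, while the first genuinely new exponent $\beta_{1}$ determines the outermost cabling parameters. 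Concretely, grouping the parametrization as $y=y_{0}(t^{m/e_{1}})+(\text{terms introducing }\beta_{1})$ exhibits $K$ as lying on the boundary of a tubular neighborhood of the knot coming from $(e_{1};\beta_{2},\ldots,\beta_{g})$, with winding data $(e_{g-1}/e_{g},\beta_{g}/e_{g})$ innermost and $(m/e_{1},\beta_{1}/e_{1})$ outermost.

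The main obstacle I expect is bookkeeping the cabling framings correctly: one must verify that the longitude-meridian winding numbers of $K$ on each successive torus are exactly $(e_{k-1}/e_{k},\beta_{k}/e_{k})$ and that the nested tubular neighborhoods are honestly disjointly embedded, which requires the subdominance estimate on the coefficients to be uniform across all $g$ scales. This is precisely the content worked out in the standard references, so rather than reproving it I would cite \cite{BrKn} for the detailed embedding and framing computation and present the induction as the organizing principle, emphasizing that the reduction from the Puiseux characteristic to the iterated type is a direct translation of the $\gcd$-tower $e_{0}=m,e_{1},\ldots,e_{g}=1$ into the sequence of coprime torus-knot pairs.
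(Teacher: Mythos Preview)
The paper does not prove this statement: it is quoted as Brauner's classical theorem with a reference to \cite{Brau}, and the paragraph following it merely \emph{recalls} the construction of the iterated torus knot in order to set up notation for Theorem~\ref{iterated braid}. So there is no proof in the paper to compare against, only the construction sketch, and your decision to outline the argument and defer details to \cite{BrKn} is entirely appropriate in spirit.

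That said, your inductive scheme has the nesting reversed relative to the paper's construction (and to the standard proof). In the paper's sketch one takes $\tilde y^{(1)}=a_{\beta_1}x^{\beta_1/m}$ first: this dominant term produces the torus knot $K_1$ of type $(m/e_1,\beta_1/e_1)$, and then each successive $a_{\beta_k}x^{\beta_k/m}$ is a \emph{smaller} oscillation which makes $K_k$ a cable of $K_{k-1}$ of type $(e_{k-1}/e_k,\beta_k/e_k)$. Thus $(m/e_1,\beta_1/e_1)$ is the core of the iteration and $(e_{g-1}/e_g,\beta_g/e_g)$ is the outermost cabling, the opposite of what you wrote. Correspondingly, your proposed grouping $y=y_0(t^{m/e_1})+(\text{terms introducing }\beta_1)$ does not separate things correctly: since $e_1=\gcd(m,\beta_1)$ divides $\beta_1$, the exponent $\beta_1$ already lies in the ``$y_0$'' part, and there is no curve with Puiseux characteristic $(e_1;\beta_2,\ldots,\beta_g)$ sitting inside $K$ as a core. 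The correct induction peels off the \emph{last} characteristic exponent (or, equivalently, builds up from the first), exhibiting $K$ as a $(e_{g-1}/e_g,\beta_g/e_g)$-cable of the knot with characteristic $(m;\beta_1,\ldots,\beta_{g-1})$ after dividing through by $e_{g-1}$. If you rewrite the inductive step in this direction, the rest of your outline (the homotopy argument killing subdominant terms, the appeal to \cite{BrKn} for the framing computation) goes through.
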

Now let us recall the construction the iterated torus knot from the good 
parametrization.
First we decompose $y(x)$ as 
$y(x)=\sum_{k=1}^{g}a_{\beta_{k}}x^{\frac{\beta_{k}}{m}}+r(x)$
where $r(x)$ is the term of small oscillations which may be 
ignored.
Thus we focus only on 
$\tilde{y}(x)=\sum_{k=1}^{g}a_{\beta_{k}}x^{\frac{\beta_{k}}{m}}$.
Let us first look at $\tilde{y}^{(1)}=a_{\beta_{1}}x^{\frac{\beta_{1}}{m}}$.
Then 
\[
	K_{1}=\{(x,\tilde{y}^{(1)}(x))\mid x\in S_{\eta}
	\}
\]
is the torus knot of type $(m/e_{1},\beta_{1}/e_{1})$ which can be 
seen as the closed braid of the geometric 
braid $B_{1}$ with the $m/e_{1}$ strings 
\[
	\tilde{y}^{(1)}_{l}(t)=
a_{\beta_{1}}\eta^{\frac{\beta_{1}}{m}}
e^{\sqrt{-1}\frac{\beta_{1}}{m}(t+l)}
\quad (0\le t\le 2\pi),
\]
for $l=1,\ldots,m/e_{1}$.
Here we note that there exists a permutation 
$\tau_{1}\in \mathfrak{S}_{m/e_{1}}$ such that 
\[
	\tilde{y}^{(1)}_{l}(t+2\pi)=\tilde{y}^{(1)}_{\tau_{1}(l)}(t)
\]
for $l=1,\ldots,m/e_{1}$. Here $\mathfrak{S}_{n}$ denotes the symmetric group
of $n$ symbols.

Then next,
$\tilde{y}^{(2)}=a_{\beta_{1}}x^{\frac{\beta_{1}}{m}}
+a_{\beta_{2}}x^{\frac{\beta_{2}}{m}}$ improves the approximation and 
\[
	K_{2}=\{(x,\tilde{y}^{(2)}(x))\mid x\in S_{\eta}
	\}
\]
is the iterated torus knot of order $2$ and type $(m/e_{1},\beta_{1}/e_{1}),
(e_{1}/e_{2},\beta_{2}/e_{2})$.
Indeed, for each $l_{1}=1,\ldots,m/e_{1}$, 
one has $e_{1}/e_{2}$ points 
\[
	\tilde{y}^{(2)}_{l_{1},l_{2}}(t)=
	a_{\beta_{1}}\eta^{\frac{\beta_{1}}{m}}
	e^{\sqrt{-1}\frac{\beta_{1}}{m}(t+l_{1})}
	+a_{\beta_{2}}\eta^{\frac{\beta_{2}}{m}}
e^{\sqrt{-1}\frac{\beta_{2}}{m}(t+l_{2})}\quad
(l_{2}=1,\ldots,e_{1}/e_{2})
\]
in the circle of 
radius $|a_{\beta_{2}}|
\eta^{\frac{\beta_{2}}{m}}$ around the point
$\tilde{y}^{(1)}_{l_{1}}(t)$.
Thus for each $l_{1}$, we have the set $\widehat{B}_{l_{1}}$ 
of the strings 
$\tilde{y}^{(2)}_{l_{1},l_{2}}(t)$
for 
$l_{2}=1,\ldots,e_{1}/e_{2}$.
As we noted above, we can identify $\widehat{B}_{l_{1}}$ and 
$\widehat{B}_{\tau_{1}(l_{1})}$ by substituting $t+2\pi$ for $t$.
Thus it suffices to see $\widehat{B}_{l_1}$ for one $l_{1}\in\{1,\ldots,n/e_{1}\}$.
Then $\widehat{B}_{l_{1}}$ defines a geometric braid $B_{2}$ if 
$t$ runs in the interval $[0,(m/e_{1})2\pi]$ and we have 
the torus knot of type $(e_{1}/e_{2},\beta_{2}/e_{2})$ as the 
closed braid of $B_{2}$.

Then one can repeat this process to refine the approximation and 
obtain the iterated torus knot of the plane curve $C(x,y)$.

\subsection{Representations of sequences of total orders and local moduli of 
differential equations}
For a connection $(\widehat{V},\widehat{\nabla})$ over 
$\mathbb{C}(\!\{x\}\!)$, i.e.,
the pair of finite dimensional $\mathbb{C}(\!\{x\}\!)$-vector 
space $\widehat{V}$
and the $\mathbb{C}$-linear connection $\widehat{\nabla}$, the 
{\em formalization} $(V,\nabla)$ is the connection over 
$\mathbb{C}(\!(x)\!)$
defined by $V=\mathbb{C}(\!(x)\!)
\otimes_{\mathbb{C}(\!\{x\}\!)
}\widehat{V}$ and $\nabla(f\otimes \hat{v})=
\frac{d}{dx}f\otimes \hat{v}+f\otimes \widehat{\nabla}(\hat{v})$ for 
$f\in \mathbb{C}(\!(x)\!)$ and $\hat{v}\in \widehat{V}$.
Let us fix a connection $(V_{0},\nabla_{0})$ over $\mathbb{C}(\!(x)\!)$ and 
consider a $\mathbb{C}(\!\{x\}\!)$-connection 
$(\widehat{V},\widehat{\nabla})$
whose formalization is isomorphic to $(V_{0},\nabla_{0})$.
Let us fix an isomorphism $\xi\colon (V,\nabla)\rightarrow (V_{0},\nabla_{0})$
and 
call $( (\widehat{V},\widehat{\nabla}),\xi)$ a {\em marked pair} formally 
isomorphic to $(V_{0},\nabla_{0})$. 
We say that marked pairs $( (\widehat{V},\widehat{\nabla},\xi))$ and 
$( (\widehat{V}',\widehat{\nabla}'),\xi')$  
are isomorphic if there exists an isomorphism $\hat{u}\colon 
(\widehat{V},\widehat{\nabla})\rightarrow (\widehat{V}',\widehat{\nabla}')$ 
as $\mathbb{C}(\!\{x\}\!)$-connections such that 
$\xi=\xi'\circ u$ where 
$u$ is the isomorphism between the formalizations of them induced by $\hat{u}$.
The isomorphism class of marked pairs formally isomorphic to 
$(V_{0},\nabla_{0})$
is denoted by $\mathfrak{M}( (V_{0},\nabla_{0}))$. 
This local moduli space $\mathfrak{M}( (V_{0},\nabla_{0}))$ is studied by many 
authors (see for instance \cite{BabVar2} and its references) 
and it is known that there exists a one to one correspondence 
from a space of certain unipotent matrices, so called {Stokes matrices},
to $\mathfrak{M}( (V_{0},\nabla_{0}))$ (see Theorem \ref{riemann-hilbert} for
example).

In this subsection we see first that the structure of 
the space of Stokes matrices,
i.e., the local moduli space $\mathfrak{M}( (V_{0},\nabla_{0}))$ 
is determined by a sequence of total orders 
of a finite set. Next we focus on the moduli of $E_{f,q}$ and 
show a structure theorem of the sequence of total
orders by using the iterated torus knot of the associated curve.

\subsubsection{Representations of sequences of total orders}
Let $I$ be a finite set and $<_{0},<_{1},\ldots,<_{h}$ $(h\ge 1)$
a sequence of total orders of $I$. 
We shortly denote the pair of $I$ and 
the sequence by 
\[
	\mathcal{I}=(I,(<_{i})_{i=0,\ldots,h}).
\]
Let us define a representation of
$\mathcal{I}$.
For $\nu=1,\ldots,h$, define subsets of $I\times I$ by 
\[
	\rho_{\nu}=\{(j,k)\in I\times I\mid j\neq k, 
	k<_{\nu-1} j, j<_{\nu} k\}.
\]
Here we note that $\rho_{\nu}$ is {\em anti-symmetric}, i.e.,
$(j,k)\in \rho_{\nu}$ contradicts $(k,j)\in \rho_{\nu}$ and 
{\em transitive}, i.e., $(j,k)\in \rho_{\nu}$ and $(k,l)\in 
\rho_{\nu}$ implies
$(j,l)\in \rho_{\nu}$.
For each $k\in I$, take a finite dimensional $\mathbb{C}$-vector space
$V_{k}$. Then {\em representations of} $\mathcal{I}$ are elements in 
\[
	\mathrm{Rep}(\mathcal{I},(V_{k})_{k\in I})=
	\bigoplus_{\nu=1}^{h}\bigoplus_{(j,k)\in \rho_{\nu}}
	\mathrm{Hom}_{\mathbb{C}}
	(V_{k},V_{j}).
\]
We call $(\mathrm{dim}_{\mathbb{C}}(V_{k}))_{k\in I}\in 
(\mathbb{Z}_{\ge 0})^{I}$
the {\em dimension vector} of $\mathrm{Rep}(\mathcal{I},(V_{k})_{k\in I})$.
For a vector $\alpha=(\alpha_{i})\in (\mathbb{Z}_{\ge 0})^{I}$,
we write
\[
	\mathrm{Rep}(\mathcal{I},\alpha)=
	\mathrm{Rep}(\mathcal{I},(\mathbb{C}^{\alpha_{k}})_{k\in I}).
\]

\if0
For a sequence of total orders $\mathcal{I}$, we shall  also define a 
$\mathbb{C}$-algebra $\mathbb{C}\mathcal{I}$ as follows.
A {\em path} of {\em length} $l\ge 1$ from $j$ to $k$ in 
$\rho_{\nu}$ for $j,k\in I,\, \nu=1,\ldots,h$ is a sequence 
\[
	\{j|(j_{1},k_{1})(j_{2},k_{2})\cdots (j_{l},k_{l})|k\}
\]
where $(j_{t},k_{t})\in \rho_{\nu}$ for all $t=1,\ldots,l$, and $j_{1}=j$,
$k_{l}=k$ and $k_{t-1}=j_{t}$ for all $t=2,\ldots,l$.
We also define for $i\in I$ a path of length $l=0$, called {\em trivial
path} and  denoted by 
\[
	\{i| |i\}.
\]
The product of two paths
$\{j|(j_{1},k_{1})\cdots (j_{l},k_{l})|k\}$ in $\rho_{\nu}$ and 
	$\{j'|(j'_{1},k'_{1})\cdots (j'_{l'},k'_{l'})|k'\}$ in $\rho_{\nu'}$
of length $l,l'\ge 1$ is 
\begin{align*}
		&\{j|(j_{1},k_{1})\cdots (j_{l},k_{l})|k\}\circ
	\{j'|(j'_{1},k'_{1})\cdots (j'_{l'},k'_{l'})|k'\}\\
	&=\begin{cases}
		\{j|(j_{1},k_{1})\cdots (j_{l},k_{l})(j'_{1},k'_{1})
		\cdots (j'_{l'},k'_{l'})|k'\}&\text{ if }\nu=\nu'\text{ and }
			k=j'\\
			0&\text{ otherwise}
	\end{cases}.
\end{align*}
The product with a trivial path is defined by 
	\begin{align*}
		\{i||i\}\circ \{j|(j_{1},k_{j})\cdots
		(j_{l},k_{l})|k\}&=
		\begin{cases}
			\{j|(j_{1},k_{j})\cdots
			(j_{l},k_{l})|k\}&\text{ if }i=j,\\
				0&\text{ otherwise},
		\end{cases}\\
		 \{j|(j_{1},k_{j})\cdots
		 (j_{l},k_{l})|k\}\circ\{i| |i\}&=
			 \begin{cases}
				 \{j|(j_{1},k_{j})\cdots
				 (j_{l},k_{l})|k\}&
					 \text{ if }k=i,\\
					 0&\text{ otherwise}.
			 \end{cases}
	\end{align*}

\begin{thm}
	We have a $\mathbb{C}$-linear categorical equivalence
	\[
		\mathrm{Mod}(\mathbb{C}(I,(<_{i})))\cong 
	\mathrm{Rep}_{\mathbb{C}}(I,(<_{i})).
	\]
\end{thm}
\fi
\subsubsection{Sequence of total orders and that of permutations}
Let us fix a sequence of total orders $\mathcal{I}=
(I,(<_{i})_{i=0,\ldots,h})$. 
For each $i=0,\ldots,h$ let us arrange the elements in $I$, 
\[
	t^{(i)}_{1}<_{i}t^{(i)}_{2}<_{i}\cdots <_{i}t^{(i)}_{n},
\]
and define the bijection
\[
	\begin{array}{cccc}
		\phi_{i}\colon &I&\longrightarrow&\{1,\ldots,n\}\\
										       &t^{(i)}_{k}&\longmapsto&k
	\end{array}.
\]
Here $n$ is the cardinality $\#I$ of $I$.
Then we have a sequence of permutations of $\{1,\ldots,n\}$,
\[
	r_{\nu}=\phi_{\nu}\circ\phi_{\nu-1}^{-1}
	\text{ for }
	\nu=1,\ldots,h.
\]
Conversely if we fix a bijection $\phi_{0}$ from $I$ to 
$\{1,\ldots,n\}$ and a sequence of 
permutations of $\{1,\ldots,n\}$,
\[
	r_{1},\ldots,r_{h},
\]
then we can define a sequence of total orders as follows.
Let us define bijections $\phi_{\nu}\colon I\rightarrow \{1,\ldots,n\}$ by
$\phi_{\nu}=r_{\nu}\circ\phi_{\nu-1}$ for $\nu=1,\ldots,h$.
For each $i=0,\ldots,h$ define the total ordering $<_{i}$ of $I$ as 
the pull back of the natural ordering of $\{1,\ldots,n\}$ by $\phi_{i}$.
Thus we have the following.
\begin{prop}
	Let $I$ be a finite set of the cardinality $n$.
	Then there exists 
	a one to one correspondence between sequences of 
	total orders of $I$ and 
	the pairs of a bijection $\phi_{0}\colon I\rightarrow 
	\{1,\ldots,n\}$ and a sequence of elements in 
	$\mathfrak{S}_{n}$.
\end{prop}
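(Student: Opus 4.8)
The plan is to verify that the two explicit constructions described just above the statement are mutually inverse, so that together they furnish the claimed bijection. The conceptual heart of the argument is the elementary observation that giving a total order $<$ on the $n$-element set $I$ is the same as giving a bijection $\phi\colon I\to\{1,\ldots,n\}$: from $<$ one reads off the bijection sending the $k$-th smallest element to $k$, and conversely the pullback of the standard order of $\{1,\ldots,n\}$ along any bijection $\phi$ is a total order for which $\phi$ is precisely this numbering. This already identifies the initial order $<_0$ with the bijection $\phi_0$, and reduces the problem to encoding the remaining orders $<_1,\ldots,<_h$ relative to $<_0$ by means of the permutations.

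First I would fix notation: to a sequence $\mathcal{I}=(I,(<_i)_{i=0,\ldots,h})$ I associate the numbering bijections $\phi_i$ and the permutations $r_\nu=\phi_\nu\circ\phi_{\nu-1}^{-1}$, obtaining the pair $(\phi_0,(r_1,\ldots,r_h))$; in the other direction, to a pair $(\phi_0,(r_1,\ldots,r_h))$ I associate the bijections $\phi_\nu=r_\nu\circ\phi_{\nu-1}$ and the orders $<_i$ defined as pullbacks of the standard order along $\phi_i$. These are exactly the two assignments set up in the paragraphs preceding the proposition.

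Then I would check the two round trips by a short induction on $\nu$. Starting from a sequence of total orders, the backward construction rebuilds the same bijections: $\phi_0$ is unchanged and, assuming $\phi'_{\nu-1}=\phi_{\nu-1}$, the defining relation gives $\phi'_\nu=r_\nu\circ\phi_{\nu-1}=\phi_\nu\circ\phi_{\nu-1}^{-1}\circ\phi_{\nu-1}=\phi_\nu$; since each $<_i$ is recovered as the pullback of the standard order along $\phi_i$, and $\phi_i$ is by definition the numbering of $<_i$, the original sequence is recovered. Conversely, starting from a pair $(\phi_0,(r_\nu))$, the orders $<_i$ produced have $\phi_i$ as their numbering by the basic observation above, so the forward construction returns the same $\phi_0$ together with the permutations $\phi_\nu\circ\phi_{\nu-1}^{-1}=r_\nu$.

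The only point demanding care, and the step I would treat most explicitly, is the compatibility between the two ways of attaching a bijection to a total order: one must confirm that the numbering bijection of the pullback order $<_i$ built in the backward construction is exactly $\phi_i$ itself, rather than some reindexing of it. This is immediate from the definition of pullback, since $\phi_i^{-1}(k)$ is the $k$-th smallest element under $<_i$, but it is precisely what makes both round-trip verifications close up. I would therefore isolate it as a one-line lemma before running the induction, after which the proposition follows with no further computation.
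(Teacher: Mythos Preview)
Your proposal is correct and follows exactly the approach the paper intends: the proposition is stated immediately after the two constructions with the phrase ``Thus we have the following,'' and no separate proof is given, so the paper is implicitly asserting precisely the mutual-inverse verification you spell out. Your write-up simply makes explicit the routine check (including the point that the numbering bijection of the pullback order is the bijection itself) that the paper leaves to the reader.
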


The identity element $\mathrm{id}\in \mathfrak{S}_{n}$ 
may be included in the sequence of permutations $r_{1},\ldots,r_{h}$ 
corresponding to $(I,(<_{i})_{i=0,\ldots,h})$.
It is equivalent to the existence of $i\in \{1,\ldots,h\}$ 
such that $<_{i}$ and 
$<_{i+1}$ define the same order.
Thus we may omit $\mathrm{id}\in \mathfrak{S}_{n}$ in the sequence of permutations and 
call the consequent sequence $r'_{1},\ldots,r'_{h'}$ without $\mathrm{id}\in 
\mathfrak{S}_{n}$ 
the {\em reduced sequence} of permutations.
\begin{df}\normalfont
	Two sequences of total orders $\mathcal{I}$ and $\mathcal{I}'$
	are said to be {\em conjugate} if 
	the corresponding reduced sequcences of permutations are conjugate.
	Namely, let $r_{1},\ldots,r_{h}$ and $r'_{1},\ldots,r'_{h'}$ be
	reduced sequences of permutations corresponding to 
	$\mathcal{I}$ and $\mathcal{I}'$ respectively.
	Then $h=h'$ and there exists $\omega\in \mathfrak{S}_{n}$ such that 
	$r_{\nu}=\omega^{-1}r'_{\nu}\omega$ for all $\nu=1,\ldots,h$.
\end{df}
\subsubsection{Local moduli space and representations of 
sequences of total orders}
We shall construct a sequence of total orders from the Stokes structure
of connections.

Let us consider a $\mathbb{C}(\!(x)\!)$-connection $(V,\nabla)$ with 
a normalized matrix $A(x)\in M(n,\mathbb{C}[x^{-1}])$.
Then it is known that there exists $F\in 
\mathrm{GL}(n,\mathbb{C}(\!(x^{\frac{1}{r}})\!))$ with $r\in \mathbb{Z}_{>0}$
such that 
\begin{multline*}
	FA(x)F^{-1}+\left(\frac{d}{dx}F\right)F^{-1}=\\
	\begin{pmatrix}
		q_{1}I_{m_{1}}&&&\\
		&q_{2}I_{m_{2}}&&\\
		&&\ddots&\\
		&&&q_{s}I_{m_{s}}
	\end{pmatrix}t^{-1}+
	\begin{pmatrix}
		L_{1}&&&\\
		&L_{2}&&\\
		&&\ddots&\\
		&&&L_{s}
	\end{pmatrix}t^{-1}
\end{multline*}
where $t=x^{\frac{1}{r}}$, $q_{i}\in t^{-1}\mathbb{C}[t^{-1}]$ $(q_{i}\neq 
q_{j} \text{ if }i\neq j)$ and 
$L_{i}\in M(m_{i},\mathbb{C})$.
For the finite set 
\[Q_{A}=\{q_{1},q_{2},\ldots,q_{s}\},
\]
we define a sequence of 
total orders as follows.
For $d\in \mathbb{R}$, we write 
\[j<_{d} k \text{\quad  if \quad} 
\mathrm{Re}(a_{0}e^{-\sqrt{-1}l_{0}d})<0
\]
where $q_{j}-q_{k}=a_{0}x^{-l_{0}}
+a_{1}x^{-l_{1}}+\cdots +a_{t}x^{-l_{t}}$ with $l_{0}>l_{1}>
\cdots >l_{t}$, $a_{0}\neq 0$ and 
say $d$ is a {\em Stokes direction} if there 
exist two distinct integers $1\le j,k\le s$ such that these are  
incomparable by $<_{d}$.
Thus we note that if $d$ is not a Stokes direction,
$<_{d}$ defines a total order on $Q_{A}$.

Let $0\le d_{1}<d_{2}<\cdots <d_{h}<2\pi$ be the collection of 
all Stokes directions in $[0,2\pi)$, so called 
{\em basic Stokes directions} (see \cite{BJL2}).
Let us choose $\varepsilon >0$ so that $\tilde{d}_{i}=
d_{i}+\varepsilon <d_{i+1}$ and for 
$i=0,\ldots, h$, where 
$d_{0}$ is the maximum of Stokes directions $d<0$ and 
we formally set $d_{h+1}=2\pi$. 
Then we have the sequence of total orders 
\[
	\mathcal{I}_{A}=(Q_{A},(<_{\tilde{d}_{i}})_{i=0,\ldots,h}).
\]

\begin{rem}
	In the above setting, we see only the basic Stokes directions $d_{i}$
	because there exists
		$\sigma\in \mathfrak{S}_{s}$ such that 
		\[
			q_{\sigma(i)}(e^{2\pi\sqrt{-1}}x)=
			q_{i}(x)
		\]
		for all $i=1,\ldots,s$ and we have 
		\[
			j<_{d} k \text{ if and only if }
			\sigma(j)<_{d+2\pi} \sigma(k)
		\]
		for $d\in \mathbb{R}$.
\end{rem}
Let us associate the representations of $\mathcal{I}_{A}$ and the space 
of certain unipotent  matrices, i.e., so called Stokes matrices.
For each $\nu=1,\ldots,h$, define
\begin{multline*}
	\mathrm{Sto}_{d_{\nu}}(A)=\\
	\left\{(X_{i,j})_{1\le i,j\le s}
		\in \bigoplus_{1\le i,j,\le s}
		\mathrm{Hom}_{\mathbb{C}}(\mathbb{C}^{m_{j}},
		\mathbb{C}^{m_{i}})\,\middle|\,
		X_{i,j}=\begin{cases}
			\mathrm{id}_{\mathbb{C}^{m_{i}}}&\text{ if }i=j\\
			0&\text{ if }(i,j)\notin \rho_{\nu}	
		\end{cases}
	\right\}.
\end{multline*}
Then we have the isomorphism
\[
	\mathrm{Rep}(\mathcal{I}_{A},(m_{i})_{i=1,\ldots,s})\cong 
	\bigoplus_{\nu=1}^{h}\mathrm{Sto}_{d_{\nu}}(A)
\]
as $\mathbb{C}$-vector spaces. 

The following is the direct consequence of Theorem VII and its Remark 2
of \cite{BJL2} (see also \cite{BabVar2,Lod}).
\begin{thm}\label{riemann-hilbert}
	We have a one to one correspondence 
	\[
		\mathrm{Rep}(\mathcal{I}_{A},(m_{i})_{i=1,\ldots,s})\cong
		\bigoplus_{\nu=1}^{h}\mathrm{Sto}_{d_{\nu}}(A)\cong
		\mathfrak{M}( (V,\nabla)).
	\]
\end{thm}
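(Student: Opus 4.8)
The plan is to treat the two isomorphisms in the statement separately, since the first is essentially a bookkeeping identity while the second is the substantive Riemann--Hilbert content that is imported from \cite{BJL2}. First I would verify
\[
	\mathrm{Rep}(\mathcal{I}_{A},(m_{i})_{i=1,\ldots,s})\cong
	\bigoplus_{\nu=1}^{h}\mathrm{Sto}_{d_{\nu}}(A)
\]
by unwinding the definitions. An element of $\mathrm{Sto}_{d_\nu}(A)$ is a block matrix $(X_{i,j})$ constrained by $X_{i,i}=\mathrm{id}$ and $X_{i,j}=0$ unless $(i,j)\in\rho_\nu$; hence its free data is exactly a choice of $X_{i,j}\in\Hom_\mathbb{C}(\mathbb{C}^{m_j},\mathbb{C}^{m_i})$ for each $(i,j)\in\rho_\nu$. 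Taking $V_k=\mathbb{C}^{m_k}$, this matches the $\nu$-th summand $\bigoplus_{(j,k)\in\rho_\nu}\Hom_\mathbb{C}(V_k,V_j)$ of the representation space after the relabelling $(j,k)\mapsto(i,j)$, and summing over $\nu$ gives the asserted identification of $\mathbb{C}$-vector spaces. (This isomorphism was already recorded just before the statement, so here I only need to confirm it is induced by the evident matching of indices.)

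Next, for the isomorphism $\bigoplus_{\nu}\mathrm{Sto}_{d_\nu}(A)\cong\mathfrak{M}((V,\nabla))$, I would invoke the classification of Balser--Jurkat--Lutz. After the formal reduction to the block-diagonal form with exponential parts $q_1,\ldots,q_s$ over $\mathbb{C}(\!(x^{1/r})\!)$, the sectorial existence theorem provides, on each open sector bounded by consecutive Stokes directions, a genuine fundamental solution asymptotic to the formal one; two such sectorial solutions flanking a basic Stokes direction $d_\nu$ differ by a Stokes multiplier. The theorem of \cite{BJL2} asserts that this multiplier is unipotent with nonvanishing off-diagonal blocks only in positions $(i,j)$ at which the dominance relation between $q_i$ and $q_j$ reverses as the direction crosses $d_\nu$. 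Assembling these multipliers over all $h$ basic Stokes directions produces a point of $\bigoplus_\nu\mathrm{Sto}_{d_\nu}(A)$, and Theorem VII of \cite{BJL2} together with its Remark 2 states that this assignment is a bijection onto the set $\mathfrak{M}((V,\nabla))$ of isomorphism classes of marked pairs, where the markings are supplied by the fixed formal isomorphism $\xi$.

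The step requiring care is the precise matching of the support condition: I must confirm that the positions $(i,j)$ in which a Stokes multiplier at $d_\nu$ may be nontrivial are exactly the pairs in $\rho_\nu=\{(j,k)\mid k<_{\tilde d_{\nu-1}}j,\ j<_{\tilde d_\nu}k\}$. This reduces to checking that the growth/decay ordering of $e^{q_i-q_j}$ that \cite{BJL2} uses to constrain the Stokes matrices coincides with the total order $<_d$ defined in the text through the sign of $\mathrm{Re}(a_0 e^{-\sqrt{-1}l_0 d})$ for the leading exponential of $q_i-q_j$. Since both orderings are governed by the sign of the real part of the dominant exponential along the ray of argument $d$, they agree, and a nontrivial multiplier at $d_\nu$ is permitted precisely when the dominance between the sheets $i$ and $j$ reverses as $d$ crosses $d_\nu$, i.e. when $(i,j)\in\rho_\nu$. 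This is the only point at which the combinatorial datum $\mathcal{I}_A$ must be reconciled with the analytic input; once it is, the theorem follows directly from the cited result.
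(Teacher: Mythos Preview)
Your proposal is correct and follows exactly the paper's approach: the paper gives no proof of its own but simply records the first isomorphism as a definitional identity (stated just before the theorem) and declares the second to be a ``direct consequence of Theorem VII and its Remark 2 of \cite{BJL2}''. Your write-up is in fact a faithful elaboration of precisely that citation, including the verification that the dominance-reversal support condition in \cite{BJL2} matches the definition of $\rho_\nu$.
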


\subsubsection{Sequences of total orders of irreducible connections 
and iterated torus knots of plane curves}
Let us return to our irreducible connection $E_{f,q}$ 
with the dual Puiseux characteristic 
$(q,p;\beta_{1},\ldots,\beta_{g})$. Then we set 
$Q_{E_{f,q}}=\{\tilde{f}_{1},\ldots,\tilde{f}_{q}\}$ and define 
the sequence of total orders $\mathcal{I}_{E_{f,q}}=
(Q_{E_{f,q}},(<_{\tilde{d}_{i}})_{i=0,\ldots,h})$ as in the 
previous subsection.
Recalling that 
\[
	\tilde{f}_{i}(\zeta_{q}x^{\frac{1}{q}})
	=\tilde{f}_{i+1}(x^{\frac{1}{q}})
\]
for $i=1,\ldots,q$ where we set $\tilde{f}_{q+1}=\tilde{f}_{1}$, 
we see that the substitution $x^{\frac{1}{q}}
\mapsto \zeta_{q}x^{\frac{1}{q}}$ defines
the action of $\mathbb{Z}/q\mathbb{Z}$ on $Q_{E_{f,q}}$.

For the latter use, we  introduce the product 
of sequences of total orders
$\mathcal{I}_{1}=(I_{1},(<^{(1)}_{i})_{i=0,\ldots,h^{(1)}})$,
$\mathcal{I}_{2}=(I_{2},(<^{(2)}_{i})_{i=0,\ldots,h^{(2)}})$
with $\#I_{1}=\#I_{2}$.
First suppose that $I_{1}=I_{2}$ and $<^{(1)}_{h^{(1)}}=<^{(2)}_{0}$, then 
the product 
\[
	(I,(\widetilde{<}_{i})_{i=0,\ldots,h^{(1)}+h^{(2)}})=
\mathcal{I}_{1}*\mathcal{I}_{2}
\]
is defined by
\[
	\widetilde{<}_{i}=
	\begin{cases}
		<^{(1)}_{i}&\text{ if }0\le i\le h^{(1)},\\
		<^{(2)}_{i-h^{(1)}}&\text{ if }
		h^{(1)}+1\le i\le h^{(1)}+h^{(2)}.
	\end{cases}
\]
For general cases, find the bijection $\phi\colon I_{1}\rightarrow I_{2}$
such that 
\[
	u<^{(1)}_{h^{(1)}} v \text{ if and only if }
	\phi(u)<^{(2)}_{0}\phi(v)
\]
in $I_{1}$ and define $\phi_{*}(\mathcal{I}_{2})=
(I_{1},(<^{\phi}_{i})_{i=0,\ldots,h^{(2)}})$ so that 
\[
	u <^{\phi}_{k}v\text{ if } \phi(u)<^{(2)}_{k}\phi(v)
\]
in $I_{1}$. Then the product of $\mathcal{I}_{1}$ and $\mathcal{I}_{2}$
 is 
 defined by $\mathcal{I}_{1}*\mathcal{I}_{2}=\mathcal{I}_{1}*
 \phi_{*}(\mathcal{I}_{2})$.

For $k\in \mathbb{Z}_{>0}$
we write $\tilde{f}_{i}\sim_{k}\tilde{f}_{j}$ if 
$\mathrm{deg}_{x^{-\frac{1}{q}}}(\tilde{f}_{i}-\tilde{f}_{j})<k$. 
Let us note that each $\sim_{k}$ preserves orders $<_{d}$ for $d\in 
\mathbb{R}$, i.e., if $\tilde{f}_{i_{1}}\sim_{k}\tilde{f}_{i_{2}}$,
$\tilde{f}_{j_{1}}\sim_{k}\tilde{f}_{j_{2}}$,
$\tilde{f}_{i_{1}}\not\sim_{k}\tilde{f}_{j_{1}}$
 and 
$\tilde{f}_{i_{1}}<_{d}\tilde{f}_{j_{1}}$, then we have 
$\tilde{f}_{i_{\epsilon_{1}}}<_{d}\tilde{f}_{j_{\epsilon_{2}}}$ for 
all $\epsilon_{1},\epsilon_{2}\in\{1,2\}$.
Thus we can consider 
\[
	\mathcal{I}^{(k)}=\mathcal{I}_{E_{f,q}}/\sim_{k}
	=(Q_{E_{f,q}}/\sim_{k},(<_{i})_{i=0,\ldots,h}).
\]
Since $\mathcal{I}^{(k)}$ define the same sequences for 
$\beta_{i}\ge k >\beta_{i+1}$, it suffices to consider 
\[
	\mathcal{I}^{(\beta_{i})},\ i=1,\ldots,g.
\]
We write $I^{(\beta_{i})}=Q_{E_{f,q}}/\sim_{\beta_{i}}$ for short.
Let us note that $I^{(\beta_{i})}$ has the cardinality $q/e_{i}$ 
and the action of 
$\mathbb{Z}/(q/e_{i})\mathbb{Z}$ induced from the $\mathbb{Z}/q\mathbb{Z}$
action on $Q_{E_{f,q}}$.

The natural projections
\[
	\mathcal{I}_{E_{f,q}}=\mathcal{I}^{(\beta_{g})}\xrightarrow[]{\pi_{g}}
	\mathcal{I}^{(\beta_{g-1})}\xrightarrow[]{\pi_{g-1}}
	\cdots\xrightarrow[]{\pi_{2}}
	\mathcal{I}^{(\beta_{1})},
\]
give decompositions
\[
	\mathcal{I}^{(\beta_{i})}=
	\bigsqcup_{a\in I^{(\beta_{i-1})}}
	\mathcal{I}^{(\beta_{i})}_{a},
\]
where $\mathcal{I}^{(\beta_{i})}_{a}=(\pi^{-1}_{i}(a),(<_{i})_{i=0,\ldots,h})$ 
for 
$a\in I^{(\beta_{i-1})}$ and $i=2,\ldots,g$.
This decomposition induces a decomposition of representations of 
$\mathcal{I}_{E_{f,q}}$ as follows. The decomposition below is well known 
as the decomposition of Stokes matrices (see Theorem 8 in \cite{MarRam} or
Proposition I.5.5 in \cite{Lod} for example).
\begin{prop}\label{Stokes decomposition}
	We have a decomposition
	\begin{multline*}
		\mathrm{Rep}(\mathcal{I}_{E_{f,q}},(1)_{i=1,\ldots,q})
		\cong \\
		\mathrm{Rep}(\mathcal{I}^{(\beta_{1})},
		(e_{1})_{i=1,\ldots,q/e_1})
		\oplus
		\bigoplus_{j=2}^{g}
		\bigoplus_{a\in I^{(\beta_{j-1})}}
		\mathrm{Rep}(\mathcal{I}^{(\beta_{j})}_{a},
		(e_{j})_{i=1,\ldots,e_{j-1}/e_{j}}).
	\end{multline*}
\end{prop}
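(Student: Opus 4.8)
The plan is to unfold both sides into direct sums indexed by order-reversals and to match them level by level. By definition,
\[
	\mathrm{Rep}(\mathcal{I}_{E_{f,q}},(1)_{i=1,\ldots,q})=
	\bigoplus_{\nu=1}^{h}\bigoplus_{(j,k)\in\rho_{\nu}}
	\mathrm{Hom}_{\mathbb{C}}(\mathbb{C},\mathbb{C}),
\]
so each one-dimensional summand corresponds to an ordered pair $(\tilde{f}_{j},\tilde{f}_{k})$ whose relative order reverses between the consecutive directions $\tilde{d}_{\nu-1}$ and $\tilde{d}_{\nu}$. I would first attach to every such reversal a \emph{level} $\ell\in\{1,\ldots,g\}$, defined as the index for which the leading term of $\tilde{f}_{j}-\tilde{f}_{k}$ has degree $-\beta_{\ell}/q$. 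Writing $\tilde{f}_{j}-\tilde{f}_{k}=\sum_{\ell}a_{\beta_{\ell}}(\zeta_{q}^{-j\beta_{\ell}}-\zeta_{q}^{-k\beta_{\ell}})x^{-\beta_{\ell}/q}$, this level is the smallest $\ell$ with $q\nmid(j-k)\beta_{\ell}$; since the sign change producing the reversal is governed by this leading term, the level is well defined and partitions the index set of the whole direct sum.

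The key input is the order-preservation property of $\sim_{k}$ noted just above: equivalent elements share orientation under every $<_{d}$. Applying it with $k=\beta_{\ell}$ shows that a reversal of level $\ell$ joins two elements that are $\sim_{\beta_{\ell-1}}$-equivalent (leading degree $\beta_{\ell}<\beta_{\ell-1}$) but not $\sim_{\beta_{\ell}}$-equivalent (leading degree exactly $\beta_{\ell}$), where we set $\beta_{0}=\infty$. I would then treat the two cases separately. For $\ell=1$ the level-one reversals run between distinct $\sim_{\beta_{1}}$-classes and, by order-preservation, descend bijectively to the reversals of $\mathcal{I}^{(\beta_{1})}$; a single reversal $(A,B)$ of $\mathcal{I}^{(\beta_{1})}$ lifts to the $e_{1}\times e_{1}$ reversals between the elements of $A$ and of $B$, which reassemble into $\mathrm{Hom}_{\mathbb{C}}(\mathbb{C}^{e_{1}},\mathbb{C}^{e_{1}})$. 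Using $\#I^{(\beta_{1})}=q/e_{1}$ this gives the first summand $\mathrm{Rep}(\mathcal{I}^{(\beta_{1})},(e_{1})_{i=1,\ldots,q/e_{1}})$. For $\ell\ge2$ the level-$\ell$ reversals lie inside a single fiber $\pi_{\ell}^{-1}(a)$ over $a\in I^{(\beta_{\ell-1})}$, on which the induced structure is precisely $\mathcal{I}^{(\beta_{\ell})}_{a}$; the same lumping identifies them with $\mathrm{Rep}(\mathcal{I}^{(\beta_{\ell})}_{a},(e_{\ell})_{i=1,\ldots,e_{\ell-1}/e_{\ell}})$, and summing over $a$ yields the remaining summands.

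The step I expect to be most delicate is the bookkeeping behind the multiplicities, namely that each $\sim_{\beta_{\ell}}$-class has exactly $e_{\ell}$ elements and each fiber $\pi_{\ell}^{-1}(a)$ exactly $e_{\ell-1}/e_{\ell}$ of them. This I would extract from the $\mathbb{Z}/q\mathbb{Z}$-action $x^{1/q}\mapsto\zeta_{q}x^{1/q}$: the class of $\tilde{f}_{i}$ consists of those $\tilde{f}_{j}$ with $q\mid(j-i)\beta_{\ell'}$ for all $\ell'\le\ell$, and a prime-by-prime valuation computation shows this holds exactly when $(q/e_{\ell})\mid(j-i)$, since $e_{\ell}=\gcd(q,\beta_{1},\ldots,\beta_{\ell})$. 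Once the multiplicities and the order-preservation compatibility are in place, the direct-sum decomposition assembles formally; alternatively, this matches the standard block decomposition of Stokes matrices, so one may instead invoke Theorem~8 of \cite{MarRam} or Proposition~I.5.5 of \cite{Lod} and only verify that the blocks here are indexed by the $\beta_{\ell}$.
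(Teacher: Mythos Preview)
Your proposal is correct and follows essentially the same idea as the paper's proof: both partition the off-diagonal pairs $(i,j)$ according to the degree $\deg_{x^{-1/q}}(\tilde{f}_i-\tilde{f}_j)\in\{\beta_1,\ldots,\beta_g\}$, which is exactly your ``level''. The paper packages this as the $\mathbb{C}$-vector-space decomposition $M(q,\mathbb{C})=\{\text{diagonals}\}\oplus\bigoplus_{k=1}^{g}M(q,\mathbb{C})^{(\beta_k)}$ and leaves the identification of each block with the claimed representation space implicit, whereas you spell out the order-preservation argument and the class-counting via the $\mathbb{Z}/q\mathbb{Z}$-action that the paper omits.
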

\begin{proof}
	This follows from the decomposition of 
	$M(q,\mathbb{C})$ as below. 
	For each $k=1,\ldots,g$, define 
	\[
		M(q,\mathbb{C})^{(\beta_{i})}=
		\left\{
			(a_{i,j})_{1\le i,j\le q}\in M(q,\mathbb{C})\,
			\middle|\,
			a_{i,j}=0\text{ if }
			\mathrm{deg}_{x^{-\frac{1}{q}}}(\tilde{f}_{i}-
			\tilde{f}_{j})\neq\beta_{k}
		\right\}.
	\]
	Then we have a decomposition
	\begin{equation*}
		M(q,\mathbb{C})=
		\left\{\mathrm{diag}(a_{1},\ldots,a_{q})\mid a_{i}\in \mathbb{C}
		\right\}
		\oplus
		\bigoplus_{i=1}^{g}M(q,\mathbb{C})^{(\beta_{i})}
	\end{equation*}
	as a $\mathbb{C}$-vector space.
\end{proof}
For each $i=2,\ldots,g$ let us fix $o\in I^{(\beta_{i-1})}$
as the image of $\tilde{f}_{1}\in Q_{E_{f,q}}$ and define 
a product of $\mathcal{I}_{a}^{(\beta_{i})}$ for 
$a\in I^{(\beta_{i-1})}$ by
\[
	\widetilde{\mathcal{I}}^{(\beta_{i})}
	=(\widetilde{I}^{(\beta_{i})},
	(\widetilde{<}_{j})_{j=0,\ldots,h_{i}})=
	\mathcal{I}^{(\beta_{i})}_{o}*
	\mathcal{I}^{(\beta_{i})}_{e(o)}*
	\mathcal{I}^{(\beta_{i})}_{e^{2}(o)}*
	\cdots*
	\mathcal{I}^{(\beta_{i})}_{e^{q/e_{i-1}-1}(o)},
\]
and for $i=1$ set  
$\widetilde{\mathcal{I}}^{(\beta_{1})}=\mathcal{I}^{(\beta_{1})}$.
Here $e\in \mathbb{Z}/(q/e_{i-1})\mathbb{Z}$ is the image of $1\in \mathbb{Z}$.
Let us note that there exists the natural isomorphism
\[
	\mathrm{Rep}(\widetilde{\mathcal{I}}^{(\beta_{j})},
	(e_{j})_{i=1,\ldots,e_{j-1}/e_{j}})
	\xrightarrow[]{\sim}
	\bigoplus_{a\in I^{(\beta_{j-1})}}
	\mathrm{Rep}(\mathcal{I}^{(\beta_{j})}_{a},
	(e_{j})_{i=1,\ldots,e_{j-1}/e_{j}})
\]
as $\mathbb{C}$-vector spaces for each $j=2,\ldots,g$. Thus by Proposition 
\ref{Stokes decomposition} we have 
\[
	\mathrm{Rep}(\mathcal{I}_{E_{f,q}},(1)_{i=1,\ldots,q})
	\cong 
	\bigoplus_{j=1}^{g}\mathrm{Rep}(\widetilde{\mathcal{I}}^{(\beta_{j})},
	(e_{j})_{i=1,\ldots,e_{j-1}/e_{j}}).
\]
The following is the main theorem of this subsection which shows that 
the structure of $\mathcal{I}_{E_{f,q}}$ is determined by the 
dual Puiseux characteristic.
This can be seen as 
an analogy of plane curve germs for which Puiseux characteristics 
are topological invariants of knot structures, namely, if two curve 
germs have the same Puiseux characteristic, then the knots of them
are isotopic.
\begin{thm}\label{iterated braid}
	For each $i=1,\ldots,g$,
	there exists 
	$\omega\in \mathfrak{S}_{e_{i-1}/e_{i}}$ and
$\widetilde{\mathcal{I}}^{(\beta_{i})}$ defines
	the sequence of elements in $\mathfrak{S}_{e_{i-1}/e_{i}}$,
	\[
		(s^{\omega}_{1}s^{\omega}_{2}\cdots 
		s^{\omega}_{e_{i-1}/e_{i}-1})^{\beta_{i}/e_{i}}
	\]
	where we omit the identity element $\mathrm{id}
	\in \mathfrak{S}_{e_{i-1}/e_{i}}$, 
	$s_{j}$ are the transpositions $(j,j+1)$ and 
	$s^{\omega}_{j}=\omega^{-1}s_{j}\omega$.
\end{thm}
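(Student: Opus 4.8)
The plan is to translate the combinatorics of $\widetilde{\mathcal{I}}^{(\beta_i)}$ into the geometry of the rotating points underlying the iterated torus knot of Subsection~\ref{braid}, and then read off the braid word. First I would set up the geometric dictionary. Fix a fiber $\pi_i^{-1}(a)$; its elements are the $\sim_{\beta_i}$-classes reducing to a fixed $\sim_{\beta_{i-1}}$-class, so they are represented by $\tilde{f}_b$ with $b$ running over one residue class modulo $q/e_{i-1}$, giving $e_{i-1}/e_i$ classes. For two such classes $\tilde{f}_b,\tilde{f}_c$ one computes that $\tilde{f}_b-\tilde{f}_c$ has leading term $a_{\beta_i}(\zeta_q^{-b\beta_i}-\zeta_q^{-c\beta_i})x^{-\beta_i/q}$, so by the definition of $<_d$ the order on $\pi_i^{-1}(a)$ at direction $d$ is the order of the real parts of the points $P_b(d)=a_{\beta_i}\zeta_q^{-b\beta_i}e^{-\sqrt{-1}(\beta_i/q)d}$. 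Since $e_i\mid\beta_i$ and $\gcd(\beta_i/e_i,e_{i-1}/e_i)=1$, the phases $\zeta_q^{-b\beta_i}$ over the fiber are exactly the $(e_{i-1}/e_i)$-th roots of unity up to a common rotation; hence these are $e_{i-1}/e_i$ equally spaced points on a circle, rotating clockwise at angular rate $\beta_i/q$ as $d$ increases.

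Next I would invoke the elementary braiding fact together with the monodromy unrolling. For $N=e_{i-1}/e_i$ equally spaced points undergoing rigid rotation and ordered by real part, each basic Stokes direction is where two points cross in real part, and by genericity of the truncated $\tilde{f}$ only one adjacent pair crosses at a time, contributing an adjacent transposition; a rotation by one position-step contributes the Coxeter cycle $s_1\cdots s_{N-1}$. The monodromy $d\mapsto d+2\pi$ acts on $Q_{E_{f,q}}$ by $\tilde{f}_j\mapsto\tilde{f}_{j+1}$, hence permutes the fibers cyclically with order $q/e_{i-1}$, so $\pi_i^{-1}(o)$ returns to itself only after $q/e_{i-1}$ periods. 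This is precisely why $\widetilde{\mathcal{I}}^{(\beta_i)}$ concatenates the $q/e_{i-1}$ fiber sequences $\mathcal{I}^{(\beta_i)}_{e^{t}(o)}$: the product unrolls the braiding of a single family of $e_{i-1}/e_i$ strands over the full closing range of length $(q/e_{i-1})2\pi$, matching the construction of the geometric braid $B_i$ (for $i=2$, the braid $\widehat{B}_{l_1}$) in Subsection~\ref{braid}.

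Then I would count steps and conclude. Over this range the relative rotation is $(\beta_i/q)(q/e_{i-1})2\pi=(\beta_i/e_{i-1})2\pi$, which in units of the fiber spacing $2\pi e_i/e_{i-1}$ equals $\beta_i/e_i$ position-steps; hence the concatenated sequence of adjacent transpositions is $(s_1\cdots s_{e_{i-1}/e_i-1})^{\beta_i/e_i}$ relative to the geometric labelling by circular position. The reduced sequence of permutations attached to $\widetilde{\mathcal{I}}^{(\beta_i)}$ instead uses the labelling $\phi_0$ coming from the base order $<_{\tilde{d}_0}$; the two labellings differ by a single $\omega\in\mathfrak{S}_{e_{i-1}/e_i}$, and conjugating by $\omega$ turns each $s_j$ into $s_j^{\omega}=\omega^{-1}s_j\omega$, yielding the asserted word. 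For $i=1$ the fiber is all of $Q_{E_{f,q}}$, and the identical count over $[0,2\pi)$ produces $\beta_1/e_1$ steps on $q/e_1$ strands directly, so $\widetilde{\mathcal{I}}^{(\beta_1)}=\mathcal{I}^{(\beta_1)}$ gives $(s_1\cdots s_{q/e_1-1})^{\beta_1/e_1}$.

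The hard part will be the unrolling and assembly: rigorously identifying the abstract product $\widetilde{\mathcal{I}}^{(\beta_i)}=\mathcal{I}^{(\beta_i)}_o*\cdots*\mathcal{I}^{(\beta_i)}_{e^{q/e_{i-1}-1}(o)}$ with a single torus braid, and verifying that the step count is \emph{exactly} $\beta_i/e_i$ with no spurious factor coming from the fiber-versus-global spacing of Stokes directions. This needs the compatibility of the $\mathbb{Z}/q$-action with the decomposition of Proposition~\ref{Stokes decomposition}, a careful count of the basic Stokes directions contributed per period, and the genericity ensuring that at each such direction only one adjacent pair within the fiber crosses. Establishing that the concatenation is a genuine power of the Coxeter element, rather than merely an element of its conjugacy class with a different cyclic structure, is the crux, and is where the recursion of Subsection~\ref{braid} must be mirrored faithfully.
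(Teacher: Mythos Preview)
Your proposal is correct and follows essentially the same route as the paper: both arguments translate the orders $<_d$ on a fiber $\pi_i^{-1}(a)$ into the real-part ordering of equally spaced rotating points, invoke the torus-braid structure of Subsection~\ref{braid} to identify the braid word $(\sigma_1\cdots\sigma_{e_{i-1}/e_i-1})^{\beta_i/e_i}$, and use the unrolling over $[0,(q/e_{i-1})2\pi]$ to match the concatenation defining $\widetilde{\mathcal{I}}^{(\beta_i)}$. The only cosmetic difference is that the paper cites the standard torus-knot braid word directly, whereas you recover it by the angular step count; note, incidentally, that for equally spaced points several disjoint adjacent pairs may cross at the same Stokes direction, so ``one pair at a time'' is not literally true, but since such transpositions commute this does not affect the resulting reduced word.
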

\begin{proof}
	Let us proceed as the argument in the subsection \ref{braid}. 
	We write
	$\tilde{f}(x^{\frac{1}{q}})
	=\sum_{k=1}^{g}a_{\beta_{k}}x^{-\frac{\beta_{k}}{q}}$.
	Let us first look at $\tilde{f}^{(1)}(x)=
a_{\beta_{1}}x^{-\frac{\beta_{1}}{q}}$.
	If $x$ moves in $S_{\eta}$ for a sufficiently small $\eta>0$, 
	then $\tilde{f}^{(1)}(x)$ moves along a
	small circle centered at  $\infty$.
	The geometric braid $B_{1}$ with the $q/e_{1}$ strings
	\[
		\tilde{f}^{(1)}_{l}(t)=a_{\beta_{1}}\eta^{-\frac{\beta_{1}}{q}}
		e^{-\sqrt{-1}\frac{\beta_{1}}{q}(t+l)}
		\quad (0\le t\le 2\pi)
	\]
	for $l=1,\ldots,q/e_{1}$ define the torus knot of type 
	$(q/e_{1},\beta_{1}/e_{1})$ as the closed braid of $B_{1}$.
	As is well known, 
	if we number the strings in $B_{1}$ suitably, we have 
	the braid words 
	\[
		(\sigma_{1}\sigma_{2}\cdots\sigma_{q/e_{1}-1})
		^{\beta_{1}/e_{1}},
	\]
	where $\sigma_{i}$ are standard generators of the braid
	group $\mathcal{B}_{q/e_{1}}$ on $q/e_{1}$ strings.
	On the other hand, let us consider the
	finite set 
	\[
		\mathfrak{I}^{(\beta_{1})}
		=\{\mathrm{Re}(\tilde{f}^{(1)}_{1}(t)),\ldots,
		\mathrm{Re}(\tilde{f}^{(1)}_{q/e_{1}}(t))\}.
	\]
	Here if  $t$ moves from $0$ to $2\pi$ then $\mathfrak{I}^{(\beta_{1})}$
	defines a sequence of total orders which is nothing but 
	$\mathcal{I}^{(\beta_{1})}$ by a suitable 
	identification $\mathfrak{I}^{(\beta_{1})}\cong I^{(\beta_{1})}$.
	Since this can be seen as  the projection of $B_{1}$ by 
	$\tilde{f}^{(1)}_{l}(t)\mapsto \mathrm{Re}(\tilde{f}^{(1)}_{l}(t))$,
	thus the sequence of total orders defines the sequence of permutations
	\[
		(s_{1}s_{2}\cdots s_{q/e_{1}-1})^{\beta_{1}/e_{1}}
	\]
	as required.

	Next let us fix $j\in \{2,\ldots,g\}$ and consider 
\[
	\tilde{f}^{(j)}(x)=
	\sum_{k=1}^{j}a_{\beta_{k}}x^{-\frac{\beta_{k}}{q}}.
\]
For $1\le k\le j$ and  $1\le l_{k}\le e_{k-1}/e_{k}$ let us define
\[
	\tilde{f}^{(k)}_{l_{1},\ldots,l_{k}}(t)=
	\sum_{i=1}^{k}a_{\beta_{i}}\eta^{-\frac{\beta_{i}}{q}}
	e^{-\sqrt{-1}\frac{\beta_{i}}{q}(t+l_{i})}.
\]
Then as we see in the subsection \ref{braid}, 
for a fixed $(l_{1},\ldots,l_{j-1})$ and $l_{j}=1,\ldots,e_{j-1}/e_{j}$, 
one has the $e_{j-1}/e_{j}$ points 
$f^{(j)}_{l_{1},\ldots,l_{j}}(t)$ 
in the circle around the point $\tilde{f}^{(j-1)}_{l_{1},\ldots,l_{j-1}}(t)$.
Moreover the strings
\[
	\tilde{f}^{(j)}(t)_{l_{1},\ldots,l_{j-1},l_{j}}(t)\quad
	(t\in [0,(q/e_{j-1})2\pi])
\]
for $l_{j}=1,\ldots,e_{j-1}/e_{j}$ define a geometric braid $B_{j}$ and 
we have a torus knot of type $(e_{j-1}/e_{j},\beta_{j}/e_{j})$ as 
the closed braid of $B_{j}$. Thus $B_{j}$ defines the braid words
\[
	(\sigma_{1}\sigma_{2}\cdots \sigma_{e_{j-1}/e_{j}-1})^{\beta_{j}/e_{j}}.
\]
By the same argument as above,
if $t$ moves form $0$ to $(q/e_{j-1})2\pi$ then  
\[
	\mathfrak{J}^{\beta_{1}}_{l_{1},\ldots,l_{j-1}}=
	\left\{
		\mathrm{Re}(\tilde{f}^{(j)}_{l_{1},\ldots,l_{j-1},1}(t)),
		\ldots,
		\mathrm{Re}(\tilde{f}^{(j)}_{l_{1},\ldots,
		l_{j-1},e_{j-1}/e_{j}}(t))
	\right\}
\]
defines a sequence of total orders which induces the sequence of permutations
\[
	(s_{1}s_{2}\cdots s_{e_{j-1}/e_{j}-1})^{\beta_{j}/e_{j}}.
\]
Meanwhile this sequence of total orders can be identified with 
$\widetilde{\mathcal{I}}^{(\beta_{j})}$ by a suitable identification
$\mathfrak{J}^{(\beta_{j})}_{l_{1},\ldots,l_{j-1}}\cong 
\widetilde{I}^{(\beta_{j})}$. Thus we are done.

\end{proof}
Thus if we fix a dual Puiseux characteristic $(q,p;\beta_{1},\ldots,\beta_{g})$,
then the conjugacy classes of $\widetilde{\mathcal{I}}^{(\beta_{i})}$,
$i=1,\ldots,g$, are 
determined.
\begin{cor}\label{conjugate}
	Let $E_{f,q}$ and $E_{f',q}$ be irreducible 
	$\mathbb{C}(\!(x)\!)$-connections 
	with the same dual Puiseux characteristic
	$(q,p;\beta_{1},\ldots,\beta_{g})$ and 
	set $\mathcal{I}=\mathcal{I}_{E_{f,q}}$, $\mathcal{I}'=
	\mathcal{I}_{E_{f',q}}.$
	Then the  
	sequences of 
	total orders $\widetilde{\mathcal{I}}^{(\beta_{i})}$ and 
	$\widetilde{\mathcal{I}'}^{(\beta_{i})}$ defined as above are 
	conjugate for each $i=1,\ldots,g$.
\end{cor}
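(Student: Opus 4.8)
The plan is to deduce this corollary directly from Theorem \ref{iterated braid}, which has already done all of the real work. First I would record that, because $E_{f,q}$ and $E_{f',q}$ share the dual Puiseux characteristic $(q,p;\beta_{1},\ldots,\beta_{g})$, all the derived integers $e_{i}=\mathrm{gcd}(e_{i-1},\beta_{i})$ coincide for the two connections; hence for each fixed $i$ the two integers $m:=e_{i-1}/e_{i}$ and $\beta_{i}/e_{i}$ are the same on both sides. Applying Theorem \ref{iterated braid} to $E_{f,q}$ yields some $\omega\in\mathfrak{S}_{m}$ for which the reduced sequence of permutations attached to $\widetilde{\mathcal{I}}^{(\beta_{i})}$ is the word
\[
	(s_{1}^{\omega}s_{2}^{\omega}\cdots s_{m-1}^{\omega})^{\beta_{i}/e_{i}},
\]
and applying it to $E_{f',q}$ yields some $\omega'\in\mathfrak{S}_{m}$ giving the analogous word $(s_{1}^{\omega'}\cdots s_{m-1}^{\omega'})^{\beta_{i}/e_{i}}$ for $\widetilde{\mathcal{I}'}^{(\beta_{i})}$.

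Next I would observe that these two reduced sequences have the same length $(m-1)\cdot(\beta_{i}/e_{i})$ and the same index pattern: each factor $s_{j}^{\omega}=\omega^{-1}s_{j}\omega$ is a genuine transposition (never the identity), so no further deletion occurs, and writing out the power shows that the $\nu$-th entry of the first sequence is $s_{j(\nu)}^{\omega}$ while that of the second is $s_{j(\nu)}^{\omega'}$, where the index function $\nu\mapsto j(\nu)$ cycling through $1,\ldots,m-1$ is dictated solely by $m$ and $\beta_{i}/e_{i}$ and is therefore common to both. This is the only point that needs any care, and it is settled exactly because the combinatorial shape of the braid word is governed entirely by the dual Puiseux characteristic. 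With the alignment in hand, the conjugacy is a one-line verification: setting $\tau=(\omega')^{-1}\omega\in\mathfrak{S}_{m}$, one checks term by term that
\[
	\tau^{-1}\,s_{j(\nu)}^{\omega'}\,\tau
	=\omega^{-1}\omega'\,(\omega')^{-1}s_{j(\nu)}\omega'\,(\omega')^{-1}\omega
	=\omega^{-1}s_{j(\nu)}\omega=s_{j(\nu)}^{\omega}
\]
for every $\nu$, which is precisely the defining relation for the two reduced sequences of permutations to be conjugate.

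Since this holds for each $i=1,\ldots,g$, I would conclude that $\widetilde{\mathcal{I}}^{(\beta_{i})}$ and $\widetilde{\mathcal{I}'}^{(\beta_{i})}$ are conjugate for all $i$. Even more cleanly, I would phrase the whole argument as follows: by Theorem \ref{iterated braid} each of the two sequences is conjugate (via $\omega$, respectively $\omega'$) to the single \emph{standard} sequence whose reduced word is $(s_{1}\cdots s_{m-1})^{\beta_{i}/e_{i}}$, and conjugacy of sequences of total orders is manifestly an equivalence relation, so the two are conjugate to each other. I do not expect any genuine obstacle here beyond the bookkeeping of matching up the two words; all of the substantive content — the identification of $\widetilde{\mathcal{I}}^{(\beta_{i})}$ with an iterated-torus-knot braid word — is already contained in Theorem \ref{iterated braid}.
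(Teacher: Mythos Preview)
Your proposal is correct and follows exactly the route the paper intends: the corollary is stated immediately after Theorem \ref{iterated braid} with no separate proof, the preceding sentence simply noting that the conjugacy classes of the $\widetilde{\mathcal{I}}^{(\beta_{i})}$ are determined by the dual Puiseux characteristic. Your argument---apply the theorem to each connection to get reduced words $(s_{1}^{\omega}\cdots s_{m-1}^{\omega})^{\beta_{i}/e_{i}}$ and $(s_{1}^{\omega'}\cdots s_{m-1}^{\omega'})^{\beta_{i}/e_{i}}$ with the same $m$ and exponent, then conjugate by $(\omega')^{-1}\omega$---is precisely the intended unwinding, and your remark that each $s_{j}^{\omega}$ is a genuine transposition (so no further identities need removing) is the only point worth making explicit.
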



\begin{thebibliography}{20}
	\bibitem{Ari1}D.~Arinkin, 
		\textit{Fourier transform and middle convolution for 
			irregular $\mathscr{D}$-modules}, preprint, 
			arXiv:0808.0699, (2009).
	\bibitem{BabVar}D.~Babbitt and V.~Varadarajan,
		\textit{Formal reduction theory of meromorphic 
			differential equations: a group theoretic view},
			Pacific J. Math. \textbf{109} (1983),
			no. 1, 1--80.
	\bibitem{BabVar2}D.~Babbitt and V.~Varadarajan,
		\textit{Local moduli for meromorphic differential equations},
		 Ast\'erisque  No. 169-170  (1989), 217 pp.
	\bibitem{BJL}W.~Balser, W.~Jurkat and D.~Lutz,
		\textit{A general theory of invariants for meromorphic 
		differential equations. I. Formal invariants},
		Funkcial. Ekvac. \textbf{22} (1979), no.2,
		197--221.
	\bibitem{BJL2}W.~Balser, W.~Jurkat and D.~Lutz,
		\textit{A general theory of invariants for meromorphic 
		differential equations. II. Proper invariants}, 
		Funkcial. Ekvac. \textbf{22}  (1979), 
			no. 3, 257--283.
	\bibitem{BloEsn}S.~Bloch and H.~Esnault,
		\textit{Local Fourier transforms and rigidity for
			$\mathscr{D}$-modules},
			Asian J. Math. \textbf{8} (2004), no. 4,
			587--605.
	\bibitem{Boa}P.~Boalch,
	\textit{Geometry and braiding of Stokes data; 
fission and wild character varieties}, Ann. of Math. (2)  \textbf{179} (2014),
	no. 1, 301--365.
	\bibitem{Brau}K.~Brauner, 
		\textit{Zur Geometrie der Funktionen zweier komplexen
		Ver\"andlichen III, IV}, Abh. Math. Sem. Hamburg \textbf{6}
		(1928), 8--54.
	\bibitem{BrKn}E.~Brieskorn and H.~Kn\"orrer,
		\textit{Plane algebraic curves. 
		Translated from the German original by John Stillwell},
		Birkh\"auser/Springer, Basel, 1986.  
	\bibitem{Fan}J.~Fang, 
		\textit{Calculation of local Fourier transforms for
		formal connections}, Sci. China Ser A \textbf{52}
		(2009), no. 10, 2195--2206.
	\bibitem{Gar}R.~Garc\'ia L\'opez,
		\textit{Microlocalization and stationary phase},
		Asian J. Math. \textbf{8} (2004), no. 4, 747--768.
	\bibitem{Gra}A.~Graham-Squire,
		\textit{Calculation of local formal Fourier transforms},
		Ark. Mat. \textbf{51} (2013), no. 1, 71--84.
	\bibitem{Hef}A.~Hefez,
		\textit{Irreducible plane curve singularities} 
		in Real and complex singularities, 1--120,
		Lecture Notes in Pure and Appl. Math., 232, Dekker,
		2003.
	\bibitem{HieSab}M.~Hien and C.~Sabbah,
		\textit{The local Laplace transform of an elementary
		irregular meromorphic connection}, preprint,
		arXiv:1405.5310 (2014).
	\bibitem{Huk}M.~Hukuhara,
		\textit{Sur les points singuliers des \'equation 
		diff\'erentielles lin\'eaires. III.},
		Mem. Fac. Sci. Kyusyu Imp. Univ. A. \textbf{2}
		(1942), 125--137.
	\bibitem{Kom}H.~Komatsu,
		\textit{On the index of ordinary differential operators},
		J. Fac. Sci. Univ. Tokyo Sect. IA Math. \textbf{18}
		(1971), 379--398.
	\bibitem{Lau}Laumon, 
		\textit{Transformation de Fourier, constantes d'\'equations 
	fonctionnelles et conjecture de Weil},
Inst. Hautes \'Etudes Sci. Publ. Math.  No. 65  (1987), 131--210.
	\bibitem{Lev}A.~Levelt,
		\textit{Jordan decomposition for a class of singular 
		differential operators}, Ark. Math. \textbf{13} (1975),
		1--27.
	\bibitem{Lod}M.~Loday-Richaud,
		\textit{Stokes phenomenon, multisummability and 
		differential Galois groups}, 
		Ann. Inst. Fourier (Grenoble) \textbf{44} (1994), 
		no. 3, 849--906.
	\bibitem{Mal}B.~Malgrange,
		\textit{Remarques sur les \'equations diff\'erentielles
		\'a points siguliers irr\'eguliers} in Lecurete Notes
		in Mathematics 712, Springer-Verlag 1979, 77--86.
	\bibitem{MarRam}J.~Martinet and J.-P.~Ramis,
		\textit{Elementary acceleration and multisummability. I},
			Ann. Inst. H. Poincar\'e Phys. Th\'eor. 
	\textbf{54} (1991),  no. 4, 331--401.
	\bibitem{Moc}T.~Mochizuki,
		\textit{Note on the Stokes structure of Fourier transform},
		Acta. Math. Vietnam. \textbf{35} (2010), no. 1,
		107--158.
	\bibitem{Rou}C. Roucairol, 
		\textit{Formal structure of direct image of holonomic 
		$\mathscr{D}$-modules of exponential type}, 
		Manuscripta Math. \textbf{124}, (2007), no.3, 299--318.
	\bibitem{Sab}C.~Sabbah,
		\textit{An explicit stationary phase formula for the 
		local formal Fourier-Laplace transform},
		in Singularities I, 309--330, Contemp. Math., 474,
		Amer. Math. Soc., Providence, RI, 2008.
	\bibitem{Sab2}C.~Sabbah,
		\textit{Differential systems of pure Gaussian type},
		preprint, arXiv:1311.4132., (2013).
	\bibitem{Sib}Y.~Sibuya, 
		\textit{Linear differential equations in the complex domain; 
		problems of analytic continuation} (in Japanese), 
		Kinokuniya, Tokyo 1976, Translations of Mathematical 
		Monographis (English translation), vol. 82, AMS, 1990.
	\bibitem{Tur}H.~Turrittin,
		\textit{Convergent solutions of linear homogeneous differential
		equations in the neighborhood of an irregular singular point},
		Acta. Math. \textbf{93}, (1955), 27--66.
	\bibitem{Wal}C.~Wall,
		\textit{Singular points of plane curves},
		London Mathematical Society Student Texts, 63,
		Cambridge University Press, Cambridge, 2004.
\end{thebibliography}
\end{document}